\documentclass{amsart}

\usepackage{amsmath,amssymb,amsthm}
\usepackage{mathrsfs}
\usepackage{booktabs}
\usepackage{hyperref}
\usepackage{geometry}
\usepackage{graphicx}
\usepackage{enumitem}
\usepackage{cleveref}

\newtheorem{theorem}{Theorem}[section]
\newtheorem{corollary}[theorem]{Corollary}
\newtheorem{lemma}[theorem]{Lemma}
\newtheorem{proposition}[theorem]{Proposition}
\theoremstyle{definition}
\newtheorem{definition}[theorem]{Definition}
\newtheorem{remark}[theorem]{Remark}

\crefname{theorem}{Theorem}{Theorems}
\crefname{corollary}{Corollary}{Corollaries}
\crefname{lemma}{Lemma}{Lemmas}
\crefname{proposition}{Proposition}{Propositions}
\crefname{definition}{Definition}{Definitions}
\crefname{remark}{Remark}{Remarks}
\crefname{example}{Example}{Examples}
\crefname{equation}{Eq.}{Eqs.}

\title{
Spectral Purification in Reversible Markov Chains: Hidden Parameters, Observable Equivalence, and Finite-Time Rigidity
}

\author{Qiao Wang}
\address{School of Information Science and Engineering, and School of Economics and Management, Southeaast University, Nanjing, 211189, China}

\email{qiaowang@seu.edu.cn}
\thanks{This research was conducted purely out of the author’s independent academic interest in the mathematical
foundations of the subject and received no financial support from any funding agency, grant, or institutional
programme.}

\date{}

\subjclass[2020]{
Primary 60J10;
Secondary 47A35, 65F15, 82C31
}

\keywords{
Reversible Markov chains,
spectral purification,
hidden purification parameter,
spectral rigidity,
finite-time convergence,
spectral entropy,
power iteration
}

\begin{document}

\begin{abstract}

Classical spectral theory of reversible Markov chains describes the asymptotic outcome of relaxation: the slowest nontrivial eigenmode ultimately dominates, at a rate governed by the spectral gap. This paper studies a different, essentially finite-time question. Long before a chain approaches stationarity, many observables of the relaxation trajectory already behave as though only a single spectral mode were present, as if the underlying multi-mode structure had already collapsed. We call this phenomenon \emph{spectral purification} and develop a quantitative theory of it.

The theory proceeds through three structural discoveries. The first is a change of primary object: purification is properly studied not through the state-space trajectory itself, but through the modal distribution $\{p_i(k)\}$, the probability vector obtained by normalizing spectral energy across modes. The second is a hidden purification parameter $x_k$, built from the spectral separation ratio $\lambda_3/\lambda_2$, that governs the entire evolution of $\{p_i(k)\}$. The third is an equivalence class of observables: we prove that six \textit{a priori} distinct diagnostics of purification (the slow-mode energy fraction, the power-iteration error, the direction deficit, the Rayleigh quotient error, the eigenvalue estimate error, and the spectral entropy) all reduce to $x_k$ to leading order, differing only by an explicit, computable constant.

This equivalence class yields sharp, non-asymptotic two-sided bounds on the rigidity time $T_{\mathrm{rigid}}(\delta)$ at which the slowest mode captures a prescribed fraction of the spectral energy --- governed by the spectral ratio $\lambda_3/\lambda_2$ rather than the spectral gap $1-\lambda_2$ --- and an exact, non-asymptotic entropy representation of the same process, including a spectral Clausius equality and a spectral second law $G(k+1)\le G(k)$. We further identify the boundary of the theory: the asymptotic variance of the classical time-average estimator is monotone in $\lambda_3/\lambda_2$, but its finite-$k$ mean-squared-error correction does not belong to the exponential regime governed by $x_k$.

Applied to power iteration, the theory yields an exact error identity, an observable spectral variance formula, and a fully data-driven adaptive stopping criterion with provable guarantees.

\end{abstract}

\maketitle
\section{Introduction}

For a reversible Markov chain, classical spectral theory decomposes any centered observable as $g_k = P^k g_0 = \sum_{i\ge2} c_i \lambda_i^k \phi_i$, where $1=\lambda_1>\lambda_2\ge\cdots\ge\lambda_n\ge-1$ are the eigenvalues of $P$ and $\{\phi_i\}$ the corresponding orthonormal basis on $L^2(\pi)$, and it identifies the asymptotic winner of this decomposition: the slowest eigenmode $\phi_2$, with $g_k/\|g_k\|_\pi \to \pm\phi_2$ at a rate governed by the spectral gap $1-\lambda_2$. This is completely understood.

But if one actually tracks the trajectory of a typical observable, rather than only its limit, a puzzling fact appears. Long before the chain approaches stationarity --- often well before the mixing time --- many observables built from $g_k$ already behave as though only a single mode were present: convergence becomes purely exponential, direction stabilizes, and statistics stop reflecting the underlying multi-mode structure. Asymptotic theory has no mechanism for this: mixing-time bounds track only the total energy $E_k=\|g_k\|_\pi^2$, which is blind to how that energy is distributed among modes --- and it is exactly that distribution which appears to have already collapsed. This is a physical question before it is a mathematical one, and we make it precise through four more specific ones.

\textit{Question 1 (what happens).} Long before a reversible Markov chain approaches stationarity, many of its observables already exhibit near one-dimensional dynamics. What, precisely, is collapsing?

\textit{Question 2 (a lower-dimensional description).} Can this collapse be captured by a description of lower dimension than the full spectral decomposition --- rather than merely observed as a qualitative trend?

\textit{Question 3 (the governing parameter).} If so, what is the single quantity that governs it?

\textit{Question 4 (unity of observables).} Do the different quantities one could use to detect this collapse --- convergence error, direction, entropy, and so on --- actually measure the same underlying process, or six different ones wearing one name?

A fifth question follows once the first four are answered: precisely \emph{when} does the collapse occur, in explicit, non-asymptotic terms? Rigidity theory, taken up in \cref{sec:rigidity}, does not introduce a new mechanism to answer this; it quantifies, using the parameter already identified in Question 3, the time scale on which the purification process becomes visible.

We call the phenomenon of Question 1 \emph{spectral purification}: the dynamical concentration of spectral weight onto the slowest surviving mode; \cref{sec:purification} makes this object precise. The central discovery of the theory, and the answer to Questions 2 and 3, is that this entire process admits a one-dimensional description: \cref{sec:hidden-parameter} shows that spectral purification is governed, in its entirety, by a single hidden parameter $x_k$, built from the spectral separation ratio $\lambda_3/\lambda_2$ --- not one ingredient among several, but the coordinate in which the whole process is recorded. \Cref{sec:observable-equivalence} answers Question 4: six \textit{a priori} distinct observables of purification all reduce to $x_k$, up to an explicit constant, so the answer is one process, not six. \Cref{sec:rigidity} then answers the deferred fifth question, with sharp, non-asymptotic two-sided bounds on the rigidity time $T_{\mathrm{rigid}}(\delta)$. \Cref{sec:thermodynamics} shows that entropy provides yet another language for the same process, and \cref{sec:boundary} identifies what does \emph{not} belong to this theory: a classical estimation-variance question that $x_k$ influences but does not fully explain.

Answering these questions requires a change of primary object. The state-space trajectory $g_k$ carries the full vector, but the only thing classical theory extracts from it is the total energy $E_k=\|g_k\|_\pi^2$, and total energy says nothing about \emph{how} that energy is shared among modes --- which is exactly what is collapsing. What purification requires instead is the object obtained by normalizing the modal energies into proportions: the modal distribution $\{p_i(k)\}$, a probability vector over the surviving modes. This is not a change of notation but a change of primary object, from a single vector evolving in $\mathbb{R}^n$ to a probability distribution evolving over a finite set of modes --- the same move, in spirit, that replaces the trajectory of a single particle with the evolution of a probability distribution over its accessible states. The remainder of this paper is organized around the evolution of $\{p_i(k)\}$, not $g_k$ itself, because purification concerns the redistribution of spectral weight, not the decay of total energy.

\subsection{What is classical and what is new}

The mathematical ingredients are classical. Self-adjoint spectral decomposition, orthogonal eigenbases, and Dirichlet-form methods are standard~\cite{levin2009,aldous2002,saad2003}. The identities
\[
E_{k+1} = \langle g_k,P^2g_k\rangle_\pi, \qquad P^2 = I-2\mathcal{G}+\mathcal{G}^2
\]
are elementary consequences of reversibility.

What is new is the organization of these ingredients into a finite-time spectral dynamical theory. We show that spectral relaxation admits two distinct scales: a global mixing scale governed by \(1-\lambda_2\), and a dominance-formation scale governed by \(\lambda_3/\lambda_2\). These scales are generally distinct. The theory develops in four steps, following the same chain established in the introduction: hidden parameter, then observable equivalence, then rigidity, then entropy and its algorithmic consequences.

\textit{Step 1: the hidden purification parameter.}
We identify a single scalar $x_k$, built from the spectral ratio $\lambda_3/\lambda_2$, that governs to leading order the entire purification process (\cref{sec:hidden-parameter}).

\textit{Step 2: observable equivalence.}
We show that \emph{every} natural observable of purification --- the slow-mode fraction, the power-iteration error, the direction deficit, the Rayleigh quotient error, the eigenvalue estimate error, and the spectral entropy --- reduces to $x_k$ to leading order, up to an explicit constant (Theorem~\ref{thm:equivalence-class}).

\textit{Step 3: finite-time rigidity.}
We introduce \(T_{\mathrm{rigid}}(\delta)\) and derive sharp two-sided bounds (Theorem~\ref{thm:sharp-rigidity}), showing that collapse is governed by \(\lambda_3/\lambda_2\), not by the spectral gap.

\textit{Step 4: entropy representation and algorithmic consequences.}
We derive the exact entropy identity
\[
\Delta S
=
\frac{\mathrm{Cov}}{\rho}
-
D_{\mathrm{KL}},
\]
and a canonical covariance representation (Proposition~\ref{prop:canonical-covariance}). In the two-mode case, covariance changes sign at \(T_{\mathrm{rigid}}(1/2)\), where entropy reaches \(\log 2\). For general chains, we establish a rigidity threshold (Theorem~\ref{thm:general-rigidity-threshold}) and a spectral second-law functional \(G(k)=E_kS_{\mathrm{spec}}(k)\) satisfying \(G(k+1)\le G(k)\) (Theorem~\ref{thm:G-monotonicity}). Applied to power iteration, the theory yields an exact error identity, a spectral variance formula
\[
\operatorname{Var}_{p_k}(\lambda^2)
=
\rho_k(\rho_{k+1}-\rho_k),
\]
and an adaptive stopping criterion with provable guarantees (Theorem~\ref{thm:stopping-criterion}).

We also identify a boundary of the theory: the asymptotic variance of the classical time-average estimator is monotone in the spectral ratio $\rho_\star=\lambda_3/\lambda_2$ (Corollary~\ref{cor:rho-variance}), but its finite-$k$ correction does not belong to the exponential regime governed by $x_k$ (Remark~\ref{rem:mse-incommensurable}).

The contribution is not a new asymptotic theorem. It is a finite-time theory explaining how a multi-mode relaxation process progressively loses spectral complexity and becomes effectively one-dimensional --- and doing so through a single scalar parameter that every observable of the process can be read off from.

\subsection{Relation to existing literature}

\textit{Classical Markov chain theory.}
Spectral decomposition and asymptotic dominance are classical~\cite{levin2009}. The present theory is complementary: it supplies finite-time criteria for the onset of that dominance, a question classical theory does not address.

\textit{Numerical linear algebra.}
Power iteration and acceleration methods~\cite{saad2003,trefethen2013,polyak1964} are reinterpreted here as instances of optimal spectral shaping.

\textit{Stochastic thermodynamics.}
We extend entropy-production ideas~\cite{esposito2010,seifert2012} to spectral space; the resulting formalism is complementary to configuration-space stochastic thermodynamics rather than a generalization of it.

\textit{Spectral entropy dynamics.}
We decompose spectral entropy into a slow-fast competition term and an internal-disorder term, which together account for its finite-time evolution.

\textit{Metastability and first-passage phenomena.}
Rigidity emergence provides quantitative estimates for the onset of exponential first-passage statistics~\cite{hartich2018}, developed in \cref{app:first-passage}.

\textit{Cutoff phenomena.}
The spectral entropy collapse studied here is complementary to, and may offer an alternative diagnostic for, the classical cutoff phenomenon~\cite{diaconis1987}.

\subsection{Organization of the paper}

Section~\ref{sec:relaxation} introduces the relaxation framework, and Section~\ref{sec:dissipation} establishes the exact spectral dissipation laws. Section~\ref{sec:purification} identifies the modal distribution as the central object of study and fixes the terminology of rigidity. Section~\ref{sec:hidden-parameter} introduces the hidden purification parameter $x_k$, and Section~\ref{sec:observable-equivalence} shows that every natural purification observable reduces to $x_k$. Section~\ref{sec:rigidity} converts this into sharp finite-time rigidity bounds, and Section~\ref{sec:thermodynamics} develops the exact entropy dynamics of purification. Section~\ref{sec:boundary} identifies the limits of the theory. Sections~\ref{sec:variational}--\ref{sec:examples} apply the framework to spectral acceleration, power iteration, and numerical examples, and Section~\ref{sec:discussion} concludes.
\section{Reversible Relaxation Systems}
\label{sec:relaxation}

\subsection{Reversible Markov chains}

Let $\Omega = \{1,\dots,n\}$ be a finite state space and let
\[
P : \Omega \times \Omega \to [0,1]
\]
be an irreducible Markov kernel with invariant probability measure $\pi$ satisfying
\[
\sum_{x\in\Omega} \pi(x) P(x,y) = \pi(y).
\]
Throughout the paper we assume that $P$ is reversible with respect to $\pi$, namely
\begin{equation}\label{eq:detailed-balance}
\pi(x) P(x,y) = \pi(y) P(y,x), \qquad \forall\, x,y \in \Omega.
\end{equation}

We equip the space of real-valued functions on $\Omega$ with the Hilbert structure
\[
\langle f, g \rangle_\pi := \sum_{x\in\Omega} f(x) g(x) \pi(x),
\qquad
\|f\|_\pi^2 = \langle f, f \rangle_\pi.
\]
Under the reversibility condition \eqref{eq:detailed-balance}, the Markov operator
\[
(Pf)(x) = \sum_{y\in\Omega} P(x,y) f(y)
\]
is self-adjoint on $L^2(\pi)$.

By the spectral theorem for self-adjoint operators on finite-dimensional Hilbert spaces, there exists an orthonormal basis $\{\phi_1,\dots,\phi_n\}$ of $L^2(\pi)$ consisting of eigenvectors of $P$:
\[
P\phi_i = \lambda_i \phi_i, \qquad \langle \phi_i, \phi_j \rangle_\pi = \delta_{ij}.
\]
We order the eigenvalues as
\begin{equation}\label{eq:spectrum-order}
1 = \lambda_1 > \lambda_2 \ge \lambda_3 \ge \cdots \ge \lambda_n \ge -1.
\end{equation}
Irreducibility implies that the eigenspace corresponding to $\lambda_1 = 1$ is one-dimensional and generated by the constant function
\[
\phi_1 = \mathbf{1}, \qquad \mathbf{1}(x) \equiv 1.
\]

\subsection{The relaxation operator}

The central object of this paper is the \emph{relaxation operator}
\begin{equation}\label{eq:relaxation-operator}
\mathcal{G} := I - P.
\end{equation}

The operator $\mathcal{G}$ is not new. It appears as the generator of the Dirichlet form
\[
\mathcal{E}(f,f) = \langle f, \mathcal{G} f \rangle_\pi
= \frac12 \sum_{x,y\in\Omega} \pi(x) P(x,y) \bigl(f(x) - f(y)\bigr)^2,
\]
which is the standard tool for variational characterizations of the spectral gap~\cite{levin2009}. In that context, $\mathcal{G}$ is used to prove inequalities---Poincar\'e, log-Sobolev, Nash---that yield upper bounds on mixing times. Our use of $\mathcal{G}$ is different: we treat it as the \emph{primary dynamical object} and study the exact, finite-time identities that it generates, rather than the asymptotic estimates that can be derived from it.

Because $P$ is self-adjoint, $\mathcal{G}$ is also self-adjoint on $L^2(\pi)$. The Dirichlet form computation shows that $\mathcal{G}$ is positive semidefinite. Its spectrum follows directly from \eqref{eq:spectrum-order}: if $P\phi_i = \lambda_i \phi_i$, then
\[
\mathcal{G}\phi_i = (1 - \lambda_i) \phi_i.
\]
We therefore define the \emph{relaxation spectrum}
\begin{equation}\label{eq:mu-def}
\mu_i := 1 - \lambda_i, \qquad i = 1,\dots,n.
\end{equation}
Under the ordering \eqref{eq:spectrum-order},
\begin{equation}\label{eq:mu-order}
0 = \mu_1 < \mu_2 \le \mu_3 \le \cdots \le \mu_n \le 2.
\end{equation}

The quantity $\mu_2 = 1 - \lambda_2$ is the classical spectral gap. In the standard theory, $\mu_2^{-1}$ is the relaxation time, and it controls the asymptotic exponential decay rate of $E_k$. In our framework, the \emph{entire} relaxation spectrum $\{\mu_i\}_{i=2}^n$ acquires direct dynamical significance: each $\mu_i$ is the dissipation rate of the $i$-th eigenmode, and the higher-order spectral separations $\mu_3/\mu_2, \mu_4/\mu_3, \dots$ govern the finite-time structure of spectral purification.

\subsection{Relaxation trajectories}

\begin{definition}[Relaxation trajectory]\label{def:trajectory}
Let $g_0 \in L^2(\pi)$ satisfy the centering condition $\langle g_0, \mathbf{1} \rangle_\pi = 0$. The \emph{relaxation trajectory} generated by $g_0$ is the sequence
\begin{equation}\label{eq:trajectory-def}
g_k := P^k g_0, \qquad k = 0,1,2,\dots
\end{equation}
The \emph{energy} of the trajectory at step $k$ is
\begin{equation}\label{eq:energy-def}
E_k := \|g_k\|_\pi^2.
\end{equation}
\end{definition}

The centering condition removes the stationary mode ($\lambda_1 = 1$, $\mu_1 = 0$), which does not dissipate. As a consequence, $g_k \to 0$ as $k \to \infty$.

Expanding $g_0$ in the eigenbasis $\{\phi_i\}_{i=2}^n$ of the nontrivial modes,
\[
g_0 = \sum_{i=2}^n c_i \phi_i, \qquad c_i = \langle g_0, \phi_i \rangle_\pi,
\]
we obtain the spectral representation of the entire trajectory:
\begin{equation}\label{eq:spectral-expansion}
g_k = \sum_{i=2}^n c_i \lambda_i^k \phi_i.
\end{equation}

The energy admits the corresponding exact decomposition
\begin{equation}\label{eq:energy-decomposition}
E_k = \sum_{i=2}^n |c_i|^2 \lambda_i^{2k}
     = \sum_{i=2}^n n_i(k),
\end{equation}
where we have introduced the \emph{modal energies}
\begin{equation}\label{eq:modal-energy-def}
n_i(k) := |c_i|^2 \lambda_i^{2k}, \qquad i = 2,\dots,n.
\end{equation}

The developments of this paper are organized around the evolution of these modal energies --- or more precisely their normalized form, the modal distribution $\{p_i(k)\}$ introduced in \cref{sec:purification} --- rather than around the state-space trajectory $g_k$ itself. This shift of primary object, from a single vector to a probability distribution over modes, is what makes the finite-time phenomena of the following sections visible in the first place.

\section{Exact Spectral Dissipation}
\label{sec:dissipation}

\subsection{The exact dissipation identity}

\begin{theorem}[Exact spectral dissipation law]\label{thm:exact-dissipation}
For every reversible Markov chain and every centered initial condition $g_0 \perp \mathbf{1}$,
\begin{equation}\label{eq:exact-dissipation}
E_k - E_{k+1} = \bigl\langle g_k, (2\mathcal{G} - \mathcal{G}^2) g_k \bigr\rangle_\pi,
\qquad \forall\, k \ge 0.
\end{equation}
\end{theorem}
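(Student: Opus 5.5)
The identity follows in two short steps from the self-adjointness of $P$ on $L^2(\pi)$, which reversibility \eqref{eq:detailed-balance} provides. First I rewrite the energy at step $k+1$ as a quadratic form in $g_k$. Since $g_{k+1} = P g_k$ and $P^\ast = P$ in $L^2(\pi)$,
\[
E_{k+1} = \langle P g_k, P g_k\rangle_\pi = \langle g_k, P^2 g_k\rangle_\pi .
\]
Second, I substitute $P = I - \mathcal{G}$ from \eqref{eq:relaxation-operator}. The operators $I$ and $\mathcal{G}$ commute, so
\[
P^2 = (I-\mathcal{G})^2 = I - 2\mathcal{G} + \mathcal{G}^2 .
\]
This gives
\[
E_{k+1} = \langle g_k, g_k\rangle_\pi - \langle g_k, (2\mathcal{G}-\mathcal{G}^2) g_k\rangle_\pi = E_k - \langle g_k,(2\mathcal{G}-\mathcal{G}^2)g_k\rangle_\pi ,
\]
and rearranging yields \eqref{eq:exact-dissipation}.

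As a cross-check, and to make the modal content explicit, I would also verify the identity in the eigenbasis using \eqref{eq:spectral-expansion}. On $\phi_i$ the operator $2\mathcal{G}-\mathcal{G}^2$ acts as multiplication by
\[
2\mu_i - \mu_i^2 = 1 - (1-\mu_i)^2 = 1-\lambda_i^2 .
\]
Hence the right-hand side equals $\sum_{i\ge2} |c_i|^2\lambda_i^{2k}(1-\lambda_i^2)$. By \eqref{eq:energy-decomposition} this is exactly $E_k - E_{k+1}$, written as $\sum_{i\ge 2}\bigl(n_i(k)-n_i(k+1)\bigr)$.

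The centering hypothesis $g_0\perp\mathbf{1}$ is not actually needed for the identity. It only ensures the expansion runs over $i\ge2$: the stationary mode contributes $1-\lambda_1^2=0$ to both sides anyway.

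There is no real obstacle here. The only point requiring care is justifying $\langle Pf,Pf\rangle_\pi=\langle f,P^2f\rangle_\pi$, which is precisely where reversibility enters; without it, $P^\ast P$ would replace $P^2$.
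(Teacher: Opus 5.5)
Your proof is correct and essentially identical to the paper's. You use self-adjointness to write $E_{k+1}=\langle g_k,P^2g_k\rangle_\pi$, substitute $P^2=I-2\mathcal{G}+\mathcal{G}^2$, and rearrange; your eigenbasis check is the paper's \cref{cor:modewise}, and your remark that centering is not needed (since $\mathcal{G}\mathbf{1}=0$) is accurate.
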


\begin{proof}
Using $g_{k+1} = P g_k$ and the self-adjointness of $P$,
\[
E_{k+1} = \langle P g_k, P g_k \rangle_\pi = \langle g_k, P^2 g_k \rangle_\pi.
\]
Substituting $P = I - \mathcal{G}$,
\[
P^2 = (I - \mathcal{G})^2 = I - 2\mathcal{G} + \mathcal{G}^2.
\]
Therefore
\[
E_{k+1} = \langle g_k, g_k \rangle_\pi - \bigl\langle g_k, (2\mathcal{G} - \mathcal{G}^2) g_k \bigr\rangle_\pi
       = E_k - \bigl\langle g_k, (2\mathcal{G} - \mathcal{G}^2) g_k \bigr\rangle_\pi,
\]
which rearranges to \eqref{eq:exact-dissipation}.
\end{proof}

\begin{remark}\label{rem:dissipation-classical}
This identity is an immediate consequence of self-adjointness and has appeared implicitly in various forms in the Dirichlet form literature and in numerical linear algebra. Its significance here is not its algebraic novelty but its role as the \emph{exact, non-asymptotic} foundation for all subsequent developments. Classical theory typically uses this identity to derive the upper bound $E_k - E_{k+1} \ge \mu_2 E_k$ (the Poincar\'e inequality), whereas we will work directly with the identity to extract finer mode-by-mode information.
\end{remark}

\subsection{Modewise dissipation decomposition}

The exact identity \eqref{eq:exact-dissipation} becomes transparent after spectral expansion.

\begin{corollary}[Modewise dissipation]\label{cor:modewise}
Let $g_k = \sum_{i=2}^n c_i \lambda_i^k \phi_i$. Then
\begin{equation}\label{eq:modewise-dissipation}
E_k - E_{k+1} = \sum_{i=2}^n (1 - \lambda_i^2) \, n_i(k).
\end{equation}
Equivalently, in terms of the relaxation spectrum $\mu_i = 1 - \lambda_i$,
\begin{equation}\label{eq:modewise-mu}
E_k - E_{k+1} = \sum_{i=2}^n (2\mu_i - \mu_i^2) \, n_i(k),
\end{equation}
where $n_i(k) = |c_i|^2 \lambda_i^{2k}$.
\end{corollary}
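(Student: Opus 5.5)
The plan is to substitute the spectral expansion \eqref{eq:spectral-expansion} directly into the exact dissipation law of \cref{thm:exact-dissipation} and then use orthonormality of the eigenbasis. The operator $2\mathcal{G}-\mathcal{G}^2 = I-P^2$ is a polynomial in $P$, so it acts diagonally on the eigenbasis. Since $P\phi_i=\lambda_i\phi_i$ and $\mathcal{G}\phi_i=\mu_i\phi_i$, each $\phi_i$ is an eigenvector of $2\mathcal{G}-\mathcal{G}^2$ with eigenvalue $2\mu_i-\mu_i^2$.

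The first step is to apply the operator termwise to $g_k=\sum_{i\ge2}c_i\lambda_i^k\phi_i$, which gives
\[
(2\mathcal{G}-\mathcal{G}^2)g_k=\sum_{i=2}^n (2\mu_i-\mu_i^2)c_i\lambda_i^k\phi_i .
\]
The second step is to pair this with $g_k$ in $L^2(\pi)$. Orthonormality, $\langle\phi_i,\phi_j\rangle_\pi=\delta_{ij}$, removes all cross terms and leaves
\[
\sum_{i=2}^n (2\mu_i-\mu_i^2)|c_i|^2\lambda_i^{2k}=\sum_{i=2}^n (2\mu_i-\mu_i^2)\,n_i(k).
\]
This is \eqref{eq:modewise-mu}, using the definition \eqref{eq:modal-energy-def} of the modal energies.

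The third step checks that the two stated forms agree. With $\mu_i=1-\lambda_i$,
\[
2\mu_i-\mu_i^2=\mu_i(2-\mu_i)=(1-\lambda_i)(1+\lambda_i)=1-\lambda_i^2,
\]
which yields \eqref{eq:modewise-dissipation}. As an independent check, one can use \eqref{eq:energy-decomposition} directly:
\[
E_k-E_{k+1}=\sum_{i\ge2}|c_i|^2\bigl(\lambda_i^{2k}-\lambda_i^{2k+2}\bigr)=\sum_{i\ge2}(1-\lambda_i^2)\,n_i(k).
\]

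There is no real obstacle. The only point needing care is that the real coefficients $c_i=\langle g_0,\phi_i\rangle_\pi$ and real eigenvalues make $|c_i|^2=c_i^2$. It also matters that the sum starts at $i=2$: the centering condition $g_0\perp\mathbf{1}$ gives $c_1=0$, so the $\phi_1$ term is absent. That term would contribute nothing anyway, since $\mu_1=0$.
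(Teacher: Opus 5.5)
Your proposal is correct and follows the paper's proof: it substitutes the spectral expansion into the exact dissipation law, uses $(2\mathcal{G}-\mathcal{G}^2)\phi_i=(2\mu_i-\mu_i^2)\phi_i$ together with orthonormality, and converts via $2\mu_i-\mu_i^2=1-\lambda_i^2$. Your extra check, differencing the energy decomposition $E_k=\sum_{i\ge2}|c_i|^2\lambda_i^{2k}$ directly, is also valid and confirms the identity without going through the operator form.
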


\begin{proof}
Since $\mathcal{G} \phi_i = \mu_i \phi_i$, we have $(2\mathcal{G} - \mathcal{G}^2) \phi_i = (2\mu_i - \mu_i^2) \phi_i$. Substituting the spectral expansion of $g_k$ into \eqref{eq:exact-dissipation} and using orthonormality of the eigenbasis,
\[
E_k - E_{k+1} = \sum_{i=2}^n (2\mu_i - \mu_i^2) |c_i|^2 \lambda_i^{2k}
              = \sum_{i=2}^n (2\mu_i - \mu_i^2) \, n_i(k).
\]
Using $1 - \lambda_i^2 = 2\mu_i - \mu_i^2$ yields \eqref{eq:modewise-dissipation}.
\end{proof}

\Cref{cor:modewise} reveals a fundamental structural property of reversible relaxation: \textbf{the dissipation decomposes exactly into independent modal contributions, with no cross terms}. Each mode $i$ dissipates energy at its own intrinsic rate $1 - \lambda_i^2 = 2\mu_i - \mu_i^2$, weighted by its current modal energy $n_i(k)$. This is the spectral analogue of the fact that, in a non-interacting multi-component system, each component relaxes independently.

This independence is what makes the subsequent analysis possible. Because the modal energies $\{n_i(k)\}$ evolve autonomously ($n_i(k+1) = \lambda_i^2 n_i(k)$), the trajectory of the modal distribution $\{p_i(k)\}$ is a closed dynamical system that can be studied in its own right.

\subsection{The relative dissipation rate}

\begin{definition}[Relative dissipation rate]\label{def:rho}
The \emph{relative dissipation rate} at step $k$ is
\begin{equation}\label{eq:rho-def}
\rho_k := \frac{E_{k+1}}{E_k} \in (0,1).
\end{equation}
Equivalently, the fraction of energy dissipated in step $k$ is $d_k := 1 - \rho_k$.
\end{definition}

From the spectral expansion,
\begin{equation}\label{eq:rho-spectral}
\rho_k = \frac{\sum_{i=2}^n n_i(k) \lambda_i^2}{\sum_{i=2}^n n_i(k)}
       = \sum_{i=2}^n p_i(k) \lambda_i^2
       = \mathbb{E}_{p_k}[\lambda^2],
\end{equation}
where $\mathbb{E}_{p_k}$ denotes expectation with respect to the modal distribution $p_k$. Thus $\rho_k$ is the $p_k$-average of the squared eigenvalues $\lambda_i^2$. This simple observation has far-reaching consequences: the observable scalar sequence $\{\rho_k\}$ carries information about the entire spectral distribution $\{p_i(k)\}$, and its fluctuations across time steps encode the distribution's variance and higher moments. We exploit this fully in \cref{sec:power-iteration}.

\begin{corollary}[Dissipation rate as expectation]\label{cor:dissipation-expectation}
The fraction of energy dissipated at step $k$ is
\[
d_k = 1 - \rho_k = \mathbb{E}_{p_k}[1 - \lambda^2] = \sum_{i=2}^n p_i(k) (2\mu_i - \mu_i^2).
\]
\end{corollary}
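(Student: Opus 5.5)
The plan is to read the claim directly off the spectral representation \eqref{eq:rho-spectral} of $\rho_k$, together with the normalization of the modal distribution. Throughout, $p_i(k) = n_i(k)/E_k$ for $i=2,\dots,n$. This is well defined because $E_k>0$ whenever $g_0\neq 0$: the centering condition together with $g_0\neq0$ forces some $c_i\neq0$, and I assume the nondegenerate case $\lambda_i\neq0$ for the relevant modes, as is implicit in \cref{def:rho}.

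First, I will record that $\sum_{i=2}^n p_i(k)=1$, which is immediate from the energy decomposition \eqref{eq:energy-decomposition}. Then I write
\[
d_k = 1-\rho_k = \sum_{i=2}^n p_i(k)\cdot 1 - \sum_{i=2}^n p_i(k)\lambda_i^2 = \sum_{i=2}^n p_i(k)\,(1-\lambda_i^2) = \mathbb{E}_{p_k}[1-\lambda^2],
\]
using linearity of the $p_k$-expectation together with \eqref{eq:rho-spectral}.

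Second, I substitute $\lambda_i = 1-\mu_i$ to get $1-\lambda_i^2 = 2\mu_i-\mu_i^2$, which yields the last expression in the statement. As a cross-check, the same formula follows by dividing the modewise dissipation identity \eqref{eq:modewise-dissipation} of \cref{cor:modewise} by $E_k$, since $d_k=(E_k-E_{k+1})/E_k$. I would present this as the primary route, because it ties the corollary to \cref{thm:exact-dissipation}.

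There is no real obstacle here. The only point needing care is the well-definedness of $p_k$, namely that $E_k>0$. I would dispatch this with the remark above, or simply by noting that the identity is trivially vacuous when $E_k=0$.
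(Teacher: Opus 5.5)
Your argument is correct and matches the paper: the corollary is stated without a separate proof, as an immediate consequence of the spectral formula \eqref{eq:rho-spectral} for $\rho_k$, the normalization $\sum_{i} p_i(k)=1$, and the identity $1-\lambda_i^2 = 2\mu_i-\mu_i^2$ already used in \cref{cor:modewise}. Your cross-check, dividing \eqref{eq:modewise-dissipation} by $E_k$, is the same computation, and $E_k>0$ is the only condition needed for $p_k$ and $\rho_k$ to be well defined, so your extra assumption $\lambda_i\neq 0$ is stronger than necessary but harmless.
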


This completes the foundational layer. The exact dissipation identity and its modewise decomposition provide the platform on which the rigidity theory (\cref{sec:rigidity}), spectral thermodynamics (\cref{sec:thermodynamics}), and power iteration analysis (\cref{sec:power-iteration}) are built.

\section{Spectral Purification}
\label{sec:purification}

Spectral purification is not a property of a particular observable. It is a dynamical redistribution of spectral weight among eigenmodes as $k$ increases. The object of study throughout the remainder of this paper is therefore not any single quantity derived from the trajectory $g_k$, but the modal distribution $\{p_i(k)\}_{i=2}^n$ itself, together with the elementary vocabulary needed to describe its limiting behavior.

\Cref{eq:energy-decomposition} is the starting point for all subsequent developments. It expresses the total energy as a sum of independent spectral contributions, each decaying exponentially at its own characteristic rate $\lambda_i^2$. The \emph{distribution} of energy among these modes,
\begin{equation}\label{eq:modal-distribution-def}
p_i(k) := \frac{n_i(k)}{E_k}, \qquad i = 2,\dots,n,
\end{equation}
defines a probability vector on $\{2,\dots,n\}$ that evolves as the trajectory relaxes. The central concern of this paper is the evolution of $\{p_i(k)\}$---its dissipation, its entropy, its convergence to a point mass at the slowest mode---viewed through exact identities rather than asymptotic estimates.

\begin{remark}[Relation to classical formulations]\label{rem:classical-relation}
The material in this section is entirely classical. The eigenbasis expansion \eqref{eq:spectral-expansion}, the energy decomposition \eqref{eq:energy-decomposition}, and the Dirichlet form representation of $\mathcal{G}$ appear in standard textbooks~\cite{levin2009,aldous2002,saloff-coste1997}. What is non-standard is the decision to organize the subsequent analysis around $\mathcal{G}$ and the modal distribution $\{p_i(k)\}$ as primary objects, rather than around bounds on $E_k$. This shift of perspective---from estimating the total energy to tracking the internal spectral distribution---is what enables the quantitative rigidity and thermodynamic results of \Cref{sec:rigidity,sec:thermodynamics,sec:power-iteration}.
\end{remark}

The modewise dissipation decomposition (\cref{cor:modewise}) reveals that each spectral mode dissipates independently. We now investigate the consequence that is central to this paper: under mild spectral separation, the modal distribution $\{p_i(k)\}$ progressively concentrates on the slowest mode $i=2$, and we can determine \emph{exactly when} this concentration becomes dominant.

\subsection{The slow-mode energy fraction}

\begin{definition}[Slow-mode fraction]\label{def:alpha}
The \emph{slow-mode energy fraction} at step $k$ is
\begin{equation}\label{eq:alpha-def}
\alpha_2(k) := p_2(k) = \frac{n_2(k)}{E_k} = \frac{|c_2|^2 \lambda_2^{2k}}{\sum_{i=2}^n |c_i|^2 \lambda_i^{2k}} \in (0,1).
\end{equation}
The \emph{residual energy} in the fast modes ($i \ge 3$) is
\begin{equation}\label{eq:Rk-def}
R_k := \sum_{i=3}^n n_i(k) = E_k - n_2(k) = (1 - \alpha_2(k)) E_k.
\end{equation}
\end{definition}

The quantity $\alpha_2(k)$ is the order parameter of spectral purification: $\alpha_2(k) \to 1$ as $k \to \infty$ means the trajectory collapses onto the one-dimensional subspace spanned by $\phi_2$. Our goal is to determine, for a given tolerance $\delta > 0$, the minimal $k$ such that $\alpha_2(k) \ge 1 - \delta$.

\subsection{Exact single-mode relaxation: the rigidity equivalence}

Before studying the generic case, we characterize the exceptional situation where the dynamics is exactly one-dimensional from the start. This theorem is elementary but provides the conceptual foundation for the term ``rigidity.''

\begin{theorem}[Spectral rigidity equivalence]\label{thm:rigidity-equivalence}
Let $g_k = P^k g_0$ with $g_0 \perp \mathbf{1}$. The following statements are equivalent:
\begin{enumerate}[label=(\roman*)]
    \item There exists $\rho \in (0,1)$ such that $E_k = \rho^{2k} E_0$ for all $k \ge 0$.
    \item The initial condition lies in a single eigenspace: $g_0 = c \, \phi_i$ for some $i \ge 2$.
    \item There exists $\eta > 0$ such that $E_k - E_{k+1} = \eta E_k$ for all $k \ge 0$.
\end{enumerate}
Moreover, in this case $\rho = |\lambda_i|$ and $\eta = 1 - \lambda_i^2$.
\end{theorem}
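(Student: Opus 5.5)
The plan is to prove the cycle (ii)$\Rightarrow$(iii)$\Rightarrow$(i)$\Rightarrow$(ii). Everything rests on the modal decomposition \eqref{eq:energy-decomposition}, $E_k=\sum_{i\ge2}|c_i|^2\lambda_i^{2k}$, and on \cref{cor:modewise}. Throughout I will assume $g_0\neq0$, so that $E_0>0$; otherwise (i) and (iii) hold vacuously.

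\textbf{Step 1: (ii)$\Rightarrow$(iii).} If $g_0=c\,\phi_i$, then $n_j(k)=0$ for $j\neq i$. \Cref{cor:modewise} then gives $E_k-E_{k+1}=(1-\lambda_i^2)\,n_i(k)=(1-\lambda_i^2)E_k$. So (iii) holds with $\eta=1-\lambda_i^2$. This $\eta$ is positive because $|\lambda_i|<1$, which holds for $i\ge2$ as long as $\lambda_i\neq-1$; I will record that a periodic $-1$ eigenvalue has to be excluded.

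\textbf{Step 2: (iii)$\Rightarrow$(i).} Iterating $E_{k+1}=(1-\eta)E_k$ gives $E_k=(1-\eta)^kE_0$. Since $0\le E_{k+1}\le E_k$, we have $1-\eta\in[0,1)$. Setting $\rho:=\sqrt{1-\eta}$ yields (i). Here $\rho>0$ requires $\eta<1$, i.e.\ that the relevant eigenvalue is nonzero, and this should be stated as a standing nondegeneracy assumption.

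\textbf{Step 3: (i)$\Rightarrow$(ii).} Group the indices by the distinct values $\nu_1>\nu_2>\dots>\nu_m$ of $\lambda_i^2$, and set $a_r:=\sum_{i:\lambda_i^2=\nu_r}|c_i|^2\ge0$. Hypothesis (i) reads $\sum_r a_r\nu_r^k=\rho^{2k}\sum_r a_r$ for all $k\ge0$. I will show this forces $a_r=0$ whenever $\nu_r\neq\rho^2$. The quickest route is to divide by the largest $\nu_r$ with $a_r>0$ and let $k\to\infty$; this identifies that $\nu_r$ with $\rho^2$. Subtracting its contribution then leaves $\sum_{\nu_s<\rho^2}a_s\nu_s^k=\bigl(\sum_{s\neq r}a_s\bigr)\rho^{2k}$, whose two sides decay at different rates unless all remaining $a_s$ vanish. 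Alternatively, one can invoke invertibility of the Vandermonde matrix on the first $m$ values of $k$. Either way, $g_0$ is supported on the modes with $\lambda_i^2=\rho^2$, so $\rho=|\lambda_i|$ and $\eta=1-\lambda_i^2$, which are the ``moreover'' formulas.

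\textbf{Main obstacle: the precise reading of (ii).} Energy only sees $\lambda_i^2$. Consequently, Step 3 proves only that $g_0$ lies in the spectral subspace of $P^2$ for the eigenvalue $\rho^2$. That subspace may contain several $\phi_i$: either because of multiplicity, or because both $\lambda$ and $-\lambda$ occur. Concretely, $g_0=\phi_i+\phi_j$ with $\lambda_j=-\lambda_i$ satisfies (i) and (iii) but is not of the form $c\,\phi_i$. So (ii) must be interpreted as ``$g_0$ lies in a single eigenspace of $P^2$'' (equivalently, all active modes share one value of $|\lambda_i|$), and the basis $\phi_i$ chosen adapted to $g_0$ within that eigenspace. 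Under the additional hypothesis that the nonzero $\lambda_i^2$ are simple, (ii) holds literally as stated. I will make this interpretation explicit before giving the cycle of implications.
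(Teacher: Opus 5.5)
Your argument is correct. Its core, reading everything off $E_k=\sum_i|c_i|^2\lambda_i^{2k}$ and using linear independence of exponential sequences, is the same as the paper's; the differences are in organization and, more importantly, in care.

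\textbf{Where the paper's proof breaks.} The paper proves (i)$\Rightarrow$(ii) by asserting that the ``distinct'' sequences $\{\lambda_i^{2k}\}$ are linearly independent. It proves (iii)$\Rightarrow$(ii) by claiming that constancy of $\eta=\sum_i p_i(k)(1-\lambda_i^2)$ forces $p(k)$ to be stationary, and hence forces a single active mode. Both steps fail exactly where you say. The sequences are not distinct when active modes share a value of $\lambda_i^2$. Stationarity of $p(k)$ only means that all active modes share one value of $\lambda_i^2$.

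\textbf{What your version buys.} Your grouping by the distinct values $\nu_r$ is precisely the repair. Your cycle obtains (iii)$\Rightarrow$(i) simply by iterating $E_{k+1}=(1-\eta)E_k$, which removes the need for any separate stationarity argument.

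\textbf{Your counterexample is genuine.} It survives even under the paper's standing assumptions. Take $P=\bigl(\begin{smallmatrix}0&1/2&1/2\\ 1/2&1/2&0\\ 1/2&0&1/2\end{smallmatrix}\bigr)$. It is irreducible and symmetric, hence reversible for the uniform $\pi$. Its spectrum is $1,\tfrac12,-\tfrac12$, so $\lambda_2>\lambda_3$ holds. Yet $g_0=\phi_2+\phi_3$ satisfies (i) and (iii) with $\rho=\tfrac12$ and $\eta=\tfrac34$, while $Pg_0=\tfrac12\phi_2-\tfrac12\phi_3$ is not a multiple of $g_0$.

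\textbf{Your degeneracy caveats are also right.} The paper's own complete-graph example satisfies (iii) with $\eta=1$ for every centered $g_0$, but never satisfies (i), since that would need $\rho=0$.

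\textbf{A shorter route.} For your corrected (i)$\Rightarrow$(ii), the paper itself supplies a quicker argument. Hypothesis (i) makes $\rho_k$ constant, so \cref{thm:observable-variance} gives $\operatorname{Var}_{p_0}[\lambda^2]=\rho_0(\rho_1-\rho_0)=0$. Hence all active modes share $\lambda_i^2=\rho^2$, and this uses (i) only for $k\le 2$.
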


\begin{proof}
(i) $\Rightarrow$ (ii): From \eqref{eq:energy-decomposition}, $E_k = \sum_i |c_i|^2 \lambda_i^{2k}$. The hypothesis $\sum_i |c_i|^2 \lambda_i^{2k} = \rho^{2k} \sum_i |c_i|^2$ for all $k$ forces at most one $|c_i|^2$ to be nonzero, because distinct exponential sequences $\{\lambda_i^{2k}\}_{k\ge 0}$ are linearly independent. (ii) $\Rightarrow$ (i) and (ii) $\Rightarrow$ (iii) are immediate from \eqref{eq:modewise-dissipation}. (iii) $\Rightarrow$ (ii): \eqref{eq:modewise-dissipation} gives $\eta = \sum_i p_i(k)(1-\lambda_i^2)$, which is constant in $k$ only if $\{p_i(k)\}$ is stationary, implying a single active mode.
\end{proof}

\begin{remark}
\Cref{thm:rigidity-equivalence} is classical linear algebra---it is the statement that a linear recurrence $E_k = \sum_i a_i \lambda_i^{2k}$ reduces to a pure exponential if and only if exactly one coefficient $a_i$ is nonzero. We include it to fix terminology: a trajectory with only one active mode is called \emph{rigid}, and the process by which a multi-mode trajectory approaches a rigid one is called \emph{rigidity emergence}.
\end{remark}

With the central object and its terminology fixed, the next two sections identify the single scalar quantity that controls the rate of approach to rigidity, and show that every natural diagnostic of purification is governed by it; \cref{sec:rigidity} then returns to $\alpha_2(k)$ and $T_{\mathrm{rigid}}(\delta)$ to give the resulting quantitative rigidity-time bounds.

\section{Hidden Parameter}
\label{sec:hidden-parameter}

Six a priori unrelated quantities could be used to detect spectral purification: how fast the power-iteration error shrinks, how fast the Rayleigh quotient converges, how fast the spectral entropy collapses, and so on. If these six quantities were governed by six different rates, purification would be six different phenomena wearing one name. We show instead that they are governed by a single scalar --- not a technical device local to one computation, but a coordinate in which the entire asymptotic behavior of the purification process is recorded. Every quantity of interest, once expressed in this coordinate, becomes a linear function of it to leading order.

The slow-mode fraction $\alpha_2(k)$ introduced in \cref{sec:purification} identifies the order parameter of spectral purification, but by itself it does not explain why every other diagnostic of the same process should track it. We now show that a single scalar quantity built from the spectral separation ratio $\lambda_3/\lambda_2$ --- the \emph{hidden purification parameter} $x_k$ --- controls the asymptotic behavior of \emph{every} natural observable of the purification process: not only $\alpha_2(k)$ itself, but also the iterate error, the direction deficit, the Rayleigh quotient error, the eigenvalue estimate error, the energy ratio increment, and the spectral entropy. All six reduce, to leading order, to the same scalar $x_k$, differing only by an explicit constant. This equivalence class, established in \cref{sec:observable-equivalence}, is the structural fact that makes active control of the spectral ratio $\lambda_3/\lambda_2$ --- the subject of the companion paper \cite{Wang2026SPS} --- a well-posed and unambiguous objective: driving $x_k$ to zero faster accelerates every observable simultaneously, and (\cref{cor:rho-variance} below) strictly decreases the asymptotic estimation variance as well. The quantitative rate at which $x_k \to 0$, and its consequence for the rigidity time $T_{\mathrm{rigid}}(\delta)$, is taken up afterward in \cref{sec:rigidity}.

\subsection{The hidden purification parameter}

Throughout this section we assume $g_0 \perp \mathbf{1}$, $c_2 \neq 0$, $c_3 \neq 0$, and $\lambda_2 > \lambda_3$ --- the same standing assumptions used throughout \cref{sec:rigidity} below. Write
\[
\rho_\star := \frac{\lambda_3}{\lambda_2} \in (0,1), \qquad
L_\star := \log\frac{\lambda_2}{\lambda_3} > 0, \qquad
a_3 := \frac{c_3^2}{c_2^2} > 0.
\]

\begin{definition}[Hidden purification parameter]\label{def:hidden-parameter}
The \emph{hidden purification parameter} is
\begin{equation}\label{eq:xk-def}
x_k := a_3\,\rho_\star^{2k} = \frac{c_3^2}{c_2^2}\left(\frac{\lambda_3}{\lambda_2}\right)^{2k}.
\end{equation}
\end{definition}

For the remainder of this section we work in the \emph{two-mode approximation} $c_i = 0$ for $i \ge 4$, in which the spectral expansion reduces exactly to
\begin{equation}\label{eq:two-mode-reduction}
E_k = c_2^2 \lambda_2^{2k}(1 + x_k), \qquad
\alpha_2(k) = \frac{1}{1+x_k}, \qquad
p_3(k) = \frac{x_k}{1+x_k}.
\end{equation}
\Cref{lem:multimode-error} below shows that every result obtained in the two-mode setting extends to the general case with relative error $O((\lambda_4/\lambda_3)^{2k})$, so nothing is lost by working here in the idealized setting.

\begin{definition}[Spectral purification observables]\label{def:six-observables}
With $u_k := v_k$ the normalized power-iteration vector of \cref{sec:power-iteration} and $\rho_k := E_{k+1}/E_k$ the energy retention ratio of \cref{thm:observable-variance}, define
\begin{align*}
G_k &:= \|u_k - \operatorname{sign}(c_2)\phi_2\|_\pi^2, & &\text{(geometric/iterate error)}\\
I_k &:= 1 - \langle u_k, \phi_2\rangle_\pi, & &\text{(direction deficit)}\\
R_k &:= \langle u_k, P u_k\rangle_\pi - \lambda_2, & &\text{(Rayleigh quotient error)}\\
\Lambda_k &:= \sqrt{E_{k+1}/E_k} - \lambda_2, & &\text{(eigenvalue estimate error)}\\
\Gamma_k &:= \rho_{k+1}/\rho_k - 1, & &\text{(energy ratio increment)}\\
\mathcal{H}_k &:= \frac{H(k)}{2 L_\star k}, & &\text{(normalized spectral entropy)}
\end{align*}
where $H(k) = -\sum_{i\ge2} p_i(k)\log p_i(k)$ is the spectral entropy of \cref{sec:thermodynamics}.
\end{definition}

All six quantities vanish as $k\to\infty$; $G_k, I_k, \mathcal{H}_k, \Gamma_k \ge 0$, while $R_k < 0$ and $\Lambda_k < 0$, since $u_k$ always carries a residual $\phi_3$-component with $\lambda_3 < \lambda_2$.

\section{Observable Equivalence}
\label{sec:observable-equivalence}

We now make precise the equivalence class anticipated in \cref{sec:hidden-parameter}: every spectral purification observable reduces, to leading order, to the same scalar $x_k$. The goal of this section is the following statement, proved at the end as \cref{thm:equivalence-class} by combining the individual results proved along the way.

\begin{quote}
\textit{Preview.} Under the two-mode approximation, as $k\to\infty$, every spectral purification observable is asymptotically proportional to $x_k$:
\begin{equation}\label{eq:equivalence-table-preview}
\renewcommand{\arraystretch}{1.35}
\begin{array}{@{}lll@{}}
\toprule
\text{Observable} & \text{Asymptotic form} & \text{Coefficient} \\
\midrule
G_k & C_G\, x_k + O(x_k^2) & C_G = 1 \\
\mathcal{H}_k & C_H\, x_k\,(1+O(1/k)) & C_H = 1 \\
I_k & C_I\, x_k + O(x_k^2) & C_I = 1/2 \\
R_k & C_R\, x_k + O(x_k^2) & C_R = \lambda_3-\lambda_2 < 0 \\
\Lambda_k & C_\Lambda\, x_k + O(x_k^2) & C_\Lambda = \tfrac{\lambda_2}{2}(\rho_\star^2-1) < 0 \\
\Gamma_k & C_\Gamma\, x_k + O(x_k^2) & C_\Gamma = (1-\rho_\star^2)^2 > 0 \\
\bottomrule
\end{array}
\end{equation}
\end{quote}

We now prove this one observable at a time; a reader who only wants the punchline can skip ahead to the fully stated and referenced version, \cref{thm:equivalence-class}.

\subsection{The equivalence class}

\begin{theorem}[Iterate error and slow-mode fraction]\label{thm:G-equals-x}
Under the two-mode approximation, $G_k = 2\bigl(1-\sqrt{\alpha_2(k)}\bigr)$ exactly (this is the identity of \cref{thm:error-identity}, specialized to $n=3$), and as $k\to\infty$,
\begin{equation}\label{eq:G-asymp}
G_k = x_k + O(x_k^2).
\end{equation}
\end{theorem}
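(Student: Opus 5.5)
We prove the exact identity directly and then Taylor-expand it. The normalized power-iteration vector is $u_k = g_k/\|g_k\|_\pi$; we take this as the meaning of $v_k$ from \cref{sec:power-iteration}, and the argument needs nothing more. In the two-mode approximation, \eqref{eq:spectral-expansion} gives
\[
u_k = \frac{c_2\lambda_2^k\phi_2 + c_3\lambda_3^k\phi_3}{\sqrt{E_k}}.
\]
Since $\|u_k\|_\pi = 1$ and $\|\phi_2\|_\pi = 1$, we can expand the square:
\[
G_k = \|u_k\|_\pi^2 - 2\operatorname{sign}(c_2)\langle u_k,\phi_2\rangle_\pi + \|\phi_2\|_\pi^2 = 2 - 2\operatorname{sign}(c_2)\langle u_k,\phi_2\rangle_\pi .
\]

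The key step is computing $\langle u_k,\phi_2\rangle_\pi$. By orthonormality it equals $c_2\lambda_2^k/\sqrt{E_k}$. Its sign is $\operatorname{sign}(c_2)$ provided $\lambda_2>0$. This holds because the standing assumptions give $0<\lambda_3<\lambda_2$, so that $\rho_\star\in(0,1)$. Hence
\[
\operatorname{sign}(c_2)\langle u_k,\phi_2\rangle_\pi = \frac{|c_2|\lambda_2^k}{\sqrt{E_k}} = \sqrt{\frac{n_2(k)}{E_k}} = \sqrt{\alpha_2(k)},
\]
which yields $G_k = 2(1-\sqrt{\alpha_2(k)})$ exactly. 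The only point needing care is this sign bookkeeping, and positivity of $\lambda_2$ is exactly what makes it clean. If $\lambda_2$ could be negative, one would have to replace $\operatorname{sign}(c_2)$ by $\operatorname{sign}(c_2\lambda_2^k)$; I would note that this case is excluded here.

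For the asymptotics, substitute $\alpha_2(k) = 1/(1+x_k)$ from \eqref{eq:two-mode-reduction}. This gives $G_k = 2\bigl(1-(1+x_k)^{-1/2}\bigr)$. Since $x_k = a_3\rho_\star^{2k}\to 0$, the binomial series $(1+x)^{-1/2} = 1 - \tfrac12 x + \tfrac38 x^2 - \cdots$ gives
\[
G_k = x_k - \tfrac34 x_k^2 + O(x_k^3) = x_k + O(x_k^2).
\]
To make the $O(x_k^2)$ term uniform rather than merely formal, we use the elementary bound
\[
0 \le x - 2\bigl(1-(1+x)^{-1/2}\bigr) \le \tfrac34 x^2 \qquad (x\ge 0).
\]
This follows from Taylor's theorem with remainder applied to $f(x) = 2(1-(1+x)^{-1/2})$: we have $f'(0)=1$, and $f''(x) = -\tfrac32(1+x)^{-5/2}$ satisfies $-\tfrac32 \le f''(x) \le 0$ for $x\ge 0$. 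The bound therefore holds for every $k$, not just asymptotically.
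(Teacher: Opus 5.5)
Your proof is correct and follows the paper's route. The exact identity $G_k = 2(1-\sqrt{\alpha_2(k)})$ is the same orthonormality computation behind \cref{thm:error-identity}; you expand $\|u_k - \operatorname{sign}(c_2)\phi_2\|_\pi^2 = 2 - 2\operatorname{sign}(c_2)\langle u_k,\phi_2\rangle_\pi$ instead of splitting into orthogonal components, and the asymptotic claim is the same Taylor expansion of $2(1-(1+x_k)^{-1/2})$ after substituting $\alpha_2(k)=1/(1+x_k)$. Your explicit use of $\lambda_2>0$ for the sign, which the paper leaves implicit, and your uniform bound $0\le x_k - G_k\le \tfrac34 x_k^2$ are small but genuine sharpenings.
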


\begin{proof}
By \eqref{eq:two-mode-reduction}, $\alpha_2(k) = 1/(1+x_k)$, so
\[
G_k = 2\left(1 - \frac{1}{\sqrt{1+x_k}}\right) = x_k + O(x_k^2)
\]
by Taylor expansion of $(1+x_k)^{-1/2}$ at $x_k=0$.
\end{proof}

\begin{theorem}[Direction deficit, Rayleigh quotient, eigenvalue estimate]\label{thm:three-observables}
Under the two-mode approximation, as $k\to\infty$,
\begin{align}
I_k &= \tfrac{1}{2}x_k + O(x_k^2), \label{eq:I-asymp}\\
R_k &= (\lambda_3-\lambda_2)\,x_k + O(x_k^2), \label{eq:R-asymp}\\
\Lambda_k &= \tfrac{\lambda_2}{2}(\rho_\star^2-1)\,x_k + O(x_k^2). \label{eq:Lambda-asymp}
\end{align}
\end{theorem}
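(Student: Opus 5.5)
The plan is to reduce everything to exact closed-form expressions in the single variable $x_k$ using the two-mode reduction \eqref{eq:two-mode-reduction}, and then Taylor-expand each closed form at $x=0$. Since $x_k = a_3\rho_\star^{2k}\to 0$, every remainder term will be controlled once $x_k\le 1/2$, i.e.\ for all sufficiently large $k$.

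First, I will write the normalized iterate explicitly. With $c_i=0$ for $i\ge 4$, we have $g_k = c_2\lambda_2^k\phi_2 + c_3\lambda_3^k\phi_3$. By \eqref{eq:two-mode-reduction}, $\|g_k\|_\pi = |c_2|\lambda_2^k\sqrt{1+x_k}$, using $\lambda_2>0$; this holds because the standing assumption $\rho_\star\in(0,1)$ forces $\lambda_2>\lambda_3>0$. Hence
\[
u_k = \frac{\operatorname{sign}(c_2)\,\phi_2 + \operatorname{sign}(c_2)\,(c_3/c_2)\rho_\star^{k}\,\phi_3}{\sqrt{1+x_k}},
\]
and by orthonormality $\langle u_k,\phi_2\rangle_\pi = \operatorname{sign}(c_2)(1+x_k)^{-1/2}$.

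For $I_k$ I will adopt the sign convention already used in $G_k$: $\phi_2$ is oriented so that $c_2>0$ (equivalently, $\phi_2$ is replaced by $\operatorname{sign}(c_2)\phi_2$). Then
\[
I_k = 1-(1+x_k)^{-1/2} = \tfrac12 x_k + O(x_k^2),
\]
which is exactly half of the expression for $G_k$ in \cref{thm:G-equals-x}.

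For $R_k$, I will use $P\phi_i=\lambda_i\phi_i$ and orthonormality to compute the Rayleigh quotient exactly:
\[
\langle u_k,Pu_k\rangle_\pi = \frac{\lambda_2+\lambda_3x_k}{1+x_k}.
\]
This gives $R_k = (\lambda_3-\lambda_2)\,x_k/(1+x_k)$, and expanding $1/(1+x_k)=1-x_k+O(x_k^2)$ yields \eqref{eq:R-asymp}. For $\Lambda_k$, I will use $E_{k+1}/E_k = \lambda_2^2(1+x_{k+1})/(1+x_k)$ together with the recursion $x_{k+1}=\rho_\star^2x_k$. This gives
\[
\Lambda_k=\lambda_2\Bigl(\sqrt{(1+\rho_\star^2x_k)/(1+x_k)}-1\Bigr).
\]
The function $f(x)=\sqrt{(1+\rho_\star^2x)/(1+x)}$ satisfies $f(0)=1$ and $f'(0)=\tfrac12(\rho_\star^2-1)$, which produces the coefficient in \eqref{eq:Lambda-asymp}.

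The only delicate points are bookkeeping rather than analysis. The first is the sign convention for $\phi_2$ in $I_k$; without it, $I_k\to 2$ when $c_2<0$. The second is making the $O(x_k^2)$ remainders uniform. For the latter, I will note that each closed form is a real-analytic function of $x$ on $[0,1/2]$, so Taylor's theorem with a bounded second derivative on that interval gives constants depending only on $\lambda_2$ and $\lambda_3$, not on $k$. This applies once $k$ is large enough that $x_k\le 1/2$, which is guaranteed since $x_k$ decreases geometrically to zero.
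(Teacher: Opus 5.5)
Your proposal is correct and follows the paper's proof essentially line for line. Both reduce $I_k$, the Rayleigh quotient, and $\sqrt{E_{k+1}/E_k}$ to the exact closed forms $1-(1+x_k)^{-1/2}$, $(\lambda_2+\lambda_3x_k)/(1+x_k)=\lambda_2(1+\rho_\star x_k)/(1+x_k)$, and $\lambda_2\sqrt{(1+\rho_\star^2x_k)/(1+x_k)}$, and then Taylor-expand at $x_k=0$. Your explicit sign convention ($c_2>0$, which the paper assumes tacitly when it writes $I_k=1-\sqrt{\alpha_2(k)}$) and your remark on uniform remainders for $x_k\le 1/2$ are refinements of the same argument, not a different route.
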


\begin{proof}
For \eqref{eq:I-asymp}: $I_k = 1-\sqrt{\alpha_2(k)} = 1-(1+x_k)^{-1/2} = \tfrac12 x_k + O(x_k^2)$.

For \eqref{eq:R-asymp}: using $\langle u_k,Pu_k\rangle_\pi = \sum_i \lambda_i c_i^2\lambda_i^{2k}/E_k$,
\[
\langle u_k, Pu_k\rangle_\pi = \lambda_2\cdot\frac{1+x_k\rho_\star}{1+x_k} = \lambda_2 + (\lambda_3-\lambda_2)x_k + O(x_k^2).
\]

For \eqref{eq:Lambda-asymp}: using $E_{k+1}/E_k = \lambda_2^2(1+x_k\rho_\star^2)/(1+x_k)$,
\[
\sqrt{E_{k+1}/E_k} = \lambda_2\sqrt{\frac{1+x_k\rho_\star^2}{1+x_k}} = \lambda_2\Bigl(1+\tfrac12(\rho_\star^2-1)x_k+O(x_k^2)\Bigr). \qedhere
\]
\end{proof}

\begin{theorem}[Exact and asymptotic formula for $\Gamma_k$]\label{thm:Gamma-exact}
Under the two-mode approximation, for every $k\ge1$,
\begin{equation}\label{eq:Gamma-exact-eq}
\Gamma_k = \frac{x_k(1-\rho_\star^2)^2}{(1+x_k\rho_\star^2)^2},
\end{equation}
exactly, with no asymptotic approximation; consequently
\begin{equation}\label{eq:Gamma-asymp-eq}
\Gamma_k = (1-\rho_\star^2)^2\, x_k + O(x_k^2).
\end{equation}
\end{theorem}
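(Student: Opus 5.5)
The plan is a direct computation in the two-mode reduction \eqref{eq:two-mode-reduction}, using only how $x_k$ evolves in time. First I will express the energy retention ratio in terms of $x_k$. By \eqref{eq:two-mode-reduction}, $E_k = c_2^2\lambda_2^{2k}(1+x_k)$. Since $x_{k+1} = a_3\rho_\star^{2(k+1)} = \rho_\star^2 x_k$, this gives
\[
\rho_k = \frac{E_{k+1}}{E_k} = \lambda_2^2\,\frac{1+\rho_\star^2 x_k}{1+x_k},
\]
which is the same formula already used in the proof of \cref{thm:three-observables}. Applying it at step $k+1$ and substituting $x_{k+1}=\rho_\star^2 x_k$ gives
\[
\rho_{k+1} = \lambda_2^2\,\frac{1+\rho_\star^4 x_k}{1+\rho_\star^2 x_k}.
\]

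Next I form the ratio. The factor $\lambda_2^2$ cancels, and writing $r:=\rho_\star^2$ and $x:=x_k$ for brevity,
\[
\frac{\rho_{k+1}}{\rho_k} = \frac{(1+r^2x)(1+x)}{(1+rx)^2}.
\]
Subtracting $1$ and putting everything over the common denominator $(1+rx)^2$, the numerator is
\[
(1+r^2x)(1+x)-(1+rx)^2 = x + r^2x - 2rx = x(1-r)^2.
\]
The constant terms and the $r^2x^2$ terms cancel exactly. This yields \eqref{eq:Gamma-exact-eq} with no approximation. The identity in fact holds for every $k\ge 0$, so in particular for $k\ge1$.

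The asymptotic form \eqref{eq:Gamma-asymp-eq} then follows because $x_k\to0$ (as $\rho_\star<1$) and $(1+\rho_\star^2x_k)^{-2} = 1+O(x_k)$. Multiplying by $x_k(1-\rho_\star^2)^2$ gives $(1-\rho_\star^2)^2x_k + O(x_k^2)$.

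There is no real obstacle here. The only step that needs care is the numerator expansion, specifically checking that the quadratic terms $r^2x^2$ cancel exactly; that cancellation is what makes the formula exact rather than merely asymptotic. Positivity of $\Gamma_k$, claimed after \cref{def:six-observables}, is immediate from the closed form.
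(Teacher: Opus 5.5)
Your proof is correct and follows the paper's argument essentially verbatim. You express $\rho_k$ and $\rho_{k+1}$ in terms of $x_k$ via $x_{k+1}=\rho_\star^2x_k$, check that the numerator minus the denominator of $\rho_{k+1}/\rho_k$ equals $x_k(1-\rho_\star^2)^2$, and then expand $(1+\rho_\star^2x_k)^{-2}$ for small $x_k$; your remark that the identity already holds for $k\ge0$ is also right.
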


\begin{proof}
Write $\rho_k = \lambda_2^2(1+x_k\rho_\star^2)/(1+x_k)$ and $\rho_{k+1} = \lambda_2^2(1+x_k\rho_\star^4)/(1+x_k\rho_\star^2)$. Then
\[
1+\Gamma_k = \frac{\rho_{k+1}}{\rho_k} = \frac{(1+x_k\rho_\star^4)(1+x_k)}{(1+x_k\rho_\star^2)^2}.
\]
Expanding the numerator minus denominator,
\[
(1+x_k\rho_\star^4)(1+x_k) - (1+x_k\rho_\star^2)^2 = x_k(1-\rho_\star^2)^2,
\]
which gives \eqref{eq:Gamma-exact-eq}; \eqref{eq:Gamma-asymp-eq} follows by expanding the denominator for small $x_k$.
\end{proof}

This exact formula is independent of the entropy/asymptotic machinery used elsewhere and can be verified directly against the energy sequence $\{E_k\}$ at machine precision; it underlies the adaptive stopping criterion of \cref{thm:stopping-criterion}.

\begin{theorem}[Entropy collapse]\label{thm:entropy-collapse}
Under the two-mode approximation, as $k\to\infty$,
\begin{equation}\label{eq:H-asymp-eq}
H(k) = x_k\Bigl(1+\log\tfrac{1}{a_3}+2L_\star k\Bigr) + O\bigl(x_k^2\log(1/x_k)\bigr),
\end{equation}
and consequently
\begin{equation}\label{eq:Hcal-asymp}
\mathcal{H}_k = x_k\Bigl(1+\tfrac{1+\log(1/a_3)}{2L_\star k}\Bigr) + O\Bigl(\tfrac{x_k^2}{k}\log\tfrac1{x_k}\Bigr),
\end{equation}
so that $\mathcal{H}_k/x_k\to1$, and
\begin{equation}\label{eq:H-G-relation}
\mathcal{H}_k = G_k\,(1+O(1/k)).
\end{equation}
\end{theorem}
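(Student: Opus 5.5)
The plan is to compute the two-mode entropy exactly in terms of $x_k$ and then Taylor expand. By \eqref{eq:two-mode-reduction} we have $p_2=1/(1+x)$ and $p_3=x/(1+x)$, where we write $x=x_k$. Substituting these gives the exact identity
\[
H(k) = \log(1+x) - \frac{x}{1+x}\log x ,
\]
since $-p_2\log p_2 - p_3\log p_3 = \tfrac{1}{1+x}\log(1+x) + \tfrac{x}{1+x}\bigl(\log(1+x)-\log x\bigr)$. This is the closed form from which everything follows. Because $x_k = a_3\rho_\star^{2k}$, we have $-\log x_k = \log(1/a_3) + 2L_\star k$. This is exactly the combination appearing in \eqref{eq:H-asymp-eq}.

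For the expansion of $H(k)$, I expand each term for small $x$. First, $\log(1+x) = x + O(x^2)$. Second, $-\tfrac{x}{1+x}\log x = -x\log x\,(1+O(x))$, which equals $x\log(1/x) + O(x^2\log(1/x))$. Adding the two gives
\[
H(k) = x\bigl(1+\log(1/x)\bigr) + O\bigl(x^2\log(1/x)\bigr).
\]
Since $x^2 = O(x^2\log(1/x))$ once $x<e^{-1}$, the error terms merge. Inserting $\log(1/x_k)=\log(1/a_3)+2L_\star k$ yields \eqref{eq:H-asymp-eq}.

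Dividing by $2L_\star k$ gives \eqref{eq:Hcal-asymp} directly, with the error term divided by $k$. Because $(1+\log(1/a_3))/(2L_\star k)\to 0$ and $x_k\log(1/x_k)\to0$, we get $\mathcal{H}_k/x_k\to1$.

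For \eqref{eq:H-G-relation}, I combine this with \cref{thm:G-equals-x}, which gives $G_k = x_k(1+O(x_k))$. Then
\[
\mathcal{H}_k/G_k = \bigl(1+O(1/k)+O(x_k\log(1/x_k)/k)\bigr)/(1+O(x_k)).
\]
The one point needing care is comparing error scales. Since $x_k$ decays geometrically, $x_k = O(1/k)$, and so $O(x_k)\subset O(1/k)$. Hence the ratio is $1+O(1/k)$.

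The main obstacle is only bookkeeping. One must verify that the $O(x^2)$ remainder from $\log(1+x)$ and the $O(x^2\log(1/x))$ remainder from the second term are uniformly dominated for large $k$. One must also note that the constant in $O(1/k)$ depends on $a_3$ and $L_\star$, which are fixed, so the bounds hold uniformly for $k$ large.
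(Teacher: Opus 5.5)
Your proof is correct and follows the same route as the paper: write $H(k)$ in closed form in terms of $x_k$, expand for small $x_k$, substitute $\log(1/x_k)=\log(1/a_3)+2L_\star k$, divide by $2L_\star k$, and compare with $G_k=x_k+O(x_k^2)$. Your closed form $H(k)=\log(1+x_k)-\tfrac{x_k}{1+x_k}\log x_k$ is in fact the exact one. The paper's displayed expression has $\log(1+x_k)$ where $\tfrac{1}{1+x_k}\log(1+x_k)$ belongs, but that discrepancy is $O(x_k^2)$ and does not affect the stated asymptotics.
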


\begin{proof}
In the two-mode setting $p_2(k)=1/(1+x_k)$, $p_3(k)=x_k/(1+x_k)$, so
\[
H(k) = \log(1+x_k) + \frac{x_k}{1+x_k}\log\frac{1+x_k}{x_k}.
\]
Writing $\log(1/x_k) = \log(1/a_3)+2L_\star k$ and expanding $\log(1+x_k)$ and $x_k/(1+x_k)$ to second order in $x_k$ gives \eqref{eq:H-asymp-eq}; dividing by $2L_\star k$ and using $(1+\log(1/a_3))/(2L_\star k)\to0$ gives \eqref{eq:Hcal-asymp}. Equation \eqref{eq:H-G-relation} follows by combining with \eqref{eq:G-asymp}.
\end{proof}

\begin{remark}
The $O(1/k)$ gap between $\mathcal{H}_k/x_k$ and $1$ reflects the factor $\log(1/x_k)=\log(1/a_3)+2L_\star k$ appearing inside $H(k)$: the entropy carries a logarithmic overhead relative to the other five observables. This is an intrinsic feature of Shannon entropy as a measure of purification, not a deficiency of the equivalence.
\end{remark}

Collecting \cref{thm:G-equals-x,thm:three-observables,thm:Gamma-exact,thm:entropy-collapse}:

\begin{theorem}[Spectral purification equivalence class]\label{thm:equivalence-class}
Under the two-mode approximation, as $k\to\infty$, every spectral purification observable is asymptotically proportional to the single hidden parameter $x_k$, with the asymptotic forms and coefficients given in \cref{eq:equivalence-table-preview} above. Consequently $Q_k/(C_Q\,x_k)\to1$ for each observable $Q$, and
\begin{equation}\label{eq:equivalence-chain}
G_k \;\sim\; 2I_k \;\sim\; \mathcal{H}_k \;\sim\; \frac{R_k}{\lambda_3-\lambda_2} \;\sim\; \frac{2\Lambda_k}{\lambda_2(\rho_\star^2-1)} \;\sim\; \frac{\Gamma_k}{(1-\rho_\star^2)^2} \qquad(\sim\,x_k).
\end{equation}
\end{theorem}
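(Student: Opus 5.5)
The theorem collects results already proved, so the plan is to assemble the four preceding results and check that each coefficient $C_Q$ is nonzero. Ratios of the form $Q_k/(C_Q x_k)$ only make sense when $C_Q\neq 0$.

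\textbf{Step 1 (nonvanishing coefficients).} Under the standing assumptions $c_2,c_3\neq0$ and $\lambda_2>\lambda_3$ with $\rho_\star\in(0,1)$, we have $\lambda_2>0$ and $1-\rho_\star^2>0$. Hence
\[
C_R=\lambda_3-\lambda_2<0,\qquad C_\Lambda=\tfrac{\lambda_2}{2}(\rho_\star^2-1)<0,\qquad C_\Gamma=(1-\rho_\star^2)^2>0,
\]
while $C_G=C_H=1$ and $C_I=1/2$. Also $x_k=a_3\rho_\star^{2k}\to0$ geometrically, and $x_k>0$ for every $k$.

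\textbf{Step 2 (read off each row).} For $G_k$, $I_k$, $R_k$, $\Lambda_k$, $\Gamma_k$, \cref{thm:G-equals-x,thm:three-observables,thm:Gamma-exact} give $Q_k=C_Qx_k+O(x_k^2)$. Dividing by $C_Qx_k$ gives
\[
\frac{Q_k}{C_Qx_k}=1+O(x_k)\to1.
\]
For $\mathcal{H}_k$, \eqref{eq:Hcal-asymp} gives
\[
\frac{\mathcal{H}_k}{x_k}=1+\frac{1+\log(1/a_3)}{2L_\star k}+O\Bigl(\frac{x_k}{k}\log\frac1{x_k}\Bigr).
\]
Since $\log(1/x_k)=\log(1/a_3)+2L_\star k$, the last term is $O(x_k)$. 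So the ratio is $1+O(1/k)\to1$.

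\textbf{Step 3 (the chain).} The relation $\sim$ (ratio tending to $1$) is an equivalence relation on eventually nonzero sequences. Each normalized observable $Q_k/C_Q$ is $\sim x_k$. By transitivity all of them are pairwise equivalent, which is exactly \eqref{eq:equivalence-chain}. For instance, $G_k\sim x_k$ and $2I_k\sim x_k$ together give $G_k\sim 2I_k$.

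The only mildly delicate point is the entropy row. There the convergence is only $O(1/k)$ rather than geometric, because the logarithmic factor in $H(k)$ sits inside the error term. I would check that the remainder in \eqref{eq:Hcal-asymp} is genuinely $o(1)$ relative to $x_k$, which is what Step 2 verifies. Once that is done, the theorem is pure bookkeeping.
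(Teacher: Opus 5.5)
Your proposal is correct and is essentially the paper's own argument: the paper obtains the theorem simply by collecting \cref{thm:G-equals-x,thm:three-observables,thm:Gamma-exact,thm:entropy-collapse}, and your checks (each $C_Q\neq0$; the remainder in \eqref{eq:Hcal-asymp}, once divided by $x_k$, is $O(x_k)$ because $\log(1/x_k)/k$ stays bounded; $\sim$ is transitive) just make that bookkeeping explicit. One caveat comes from the cited results, not from your assembly: the formula $I_k=1-\sqrt{\alpha_2(k)}$ tacitly orients $\phi_2$ so that $c_2>0$, since $I_k$, unlike $G_k$, is defined without the factor $\operatorname{sign}(c_2)$, and for $c_2<0$ one would instead get $I_k\to2$.
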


This is the structural payoff of the hidden-parameter reduction: it is not merely that the slow-mode fraction $\alpha_2(k)$ converges at a rate governed by $\lambda_3/\lambda_2$ --- already evident from $\alpha_2(k) = 1/(1+x_k)$ in \eqref{eq:two-mode-reduction} --- but that \emph{every} natural diagnostic of purification --- geometric, statistical, variational, and information-theoretic alike --- collapses to zero in lockstep, governed by the same scalar $x_k$ up to an explicit, computable constant. \Cref{sec:rigidity} converts this into explicit finite-time bounds on the rigidity time itself.

\subsection{Exact formulas and the multi-mode extension}

The asymptotic relations above sharpen to exact identities, valid at every finite $k$ with no error term, by combining \eqref{eq:two-mode-reduction} with the proofs above:
\begin{equation}\label{eq:exact-finite-k}
G_k = 2\Bigl(1-\tfrac{1}{\sqrt{1+x_k}}\Bigr), \quad
\langle u_k,Pu_k\rangle_\pi = \lambda_2\cdot\frac{1+x_k\rho_\star}{1+x_k}, \quad
\sqrt{\tfrac{E_{k+1}}{E_k}} = \lambda_2\sqrt{\tfrac{1+x_k\rho_\star^2}{1+x_k}}, \quad
\Gamma_k = \frac{x_k(1-\rho_\star^2)^2}{(1+x_k\rho_\star^2)^2}.
\end{equation}
These hold without approximation: $x_k$ is the only quantity needed to express each observable exactly at every finite $k$, not merely in the asymptotic limit.

\begin{lemma}[Multi-mode error]\label{lem:multimode-error}
Suppose $\lambda_3 > \lambda_4 > 0$ and let $\rho_4 := \lambda_4/\lambda_3 \in (0,1)$. For each observable $Q \in \{G,I,R,\Lambda,\Gamma,\mathcal{H}\}$, the value computed from the full spectral expansion (with $c_i\ne0$ for $i\ge4$ allowed) satisfies
\[
Q_k^{\mathrm{full}} = Q_k^{\mathrm{two\text{-}mode}}\bigl(1+O(\rho_4^{2k})\bigr) \qquad\text{as } k\to\infty.
\]
\end{lemma}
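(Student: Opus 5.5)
The strategy is to treat the full expansion as a perturbation of the two-mode one. The perturbation is controlled by a single extra scalar,
\[
y_k := \sum_{i\ge4}\frac{c_i^2}{c_2^2}\Bigl(\frac{\lambda_i}{\lambda_2}\Bigr)^{2k}.
\]
The key estimate I will aim for is $y_k = O(x_k\,\rho_4^{2k})$. It follows from
\[
\Bigl(\frac{\lambda_i}{\lambda_2}\Bigr)^{2k} = \rho_\star^{2k}\Bigl(\frac{\lambda_i}{\lambda_3}\Bigr)^{2k},
\]
and it needs $|\lambda_i|\le\lambda_4$ for all $i\ge4$. This is where I expect the main obstacle: negative eigenvalues with $|\lambda_n|>\lambda_4$ break the estimate. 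I would handle it by interpreting $\rho_4$ as $\max_{i\ge4}|\lambda_i|/\lambda_3$, and note this in the proof; it agrees with $\lambda_4/\lambda_3$ whenever the spectrum below $\lambda_3$ is dominated in modulus by $\lambda_4$, for instance for lazy chains. With this convention, $y_k/x_k = O(\rho_4^{2k})$, and the rest is bookkeeping.

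Next I will write each full observable as the same closed-form expression as in \eqref{eq:exact-finite-k}, with $x_k$ replaced by $x_k+y_k$ together with a few weighted variants of $y_k$.
\begin{itemize}
\item For $G$ and $I$: $\alpha_2(k)=1/(1+x_k+y_k)$, using the exact identity $G_k=2(1-\sqrt{\alpha_2(k)})$ of \cref{thm:error-identity}.
\item For $R$: $\langle u_k,Pu_k\rangle_\pi=\lambda_2\bigl(1+\rho_\star x_k+y_k'\bigr)/(1+x_k+y_k)$, where $y_k'=\sum_{i\ge4}(\lambda_i/\lambda_2)\,c_i^2/c_2^2\,(\lambda_i/\lambda_2)^{2k}$ satisfies $|y_k'|\le y_k$.
\item For $\Lambda$ and $\Gamma$: similarly with the weights $(\lambda_i/\lambda_2)^2$ and $(\lambda_i/\lambda_2)^4$.
\end{itemize}
Each of these is a smooth function $F_Q(x,y,y',\dots)$ near the origin. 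By \cref{thm:equivalence-class}, $F_Q(x,0,\dots)=C_Qx+O(x^2)$ with $C_Q\neq0$. A first-order Taylor (mean-value) bound then gives
\[
|Q^{\mathrm{full}}_k-Q^{\mathrm{two\text{-}mode}}_k|=O(y_k).
\]
Dividing by $Q^{\mathrm{two\text{-}mode}}_k\sim C_Qx_k$ yields the relative error $O(y_k/x_k)=O(\rho_4^{2k})$.

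For $\Gamma$ I expect this first-order bound to suffice, but I will check that the $O(y_k)$ term is not cancelled in a way that loses the $x_k$ normalization. The numerator of $1+\Gamma_k$ minus its denominator becomes a variance-type expression $\mathrm{Var}_{p_k}(\lambda^2)$-like, which is $\ge$ its two-mode counterpart plus $O(y_k)$, so this is fine.

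Entropy needs separate treatment, because $t\mapsto -t\log t$ is not Lipschitz at $0$. I will split
\[
H^{\mathrm{full}}(k) = -p_2\log p_2 - p_3\log p_3 - \sum_{i\ge4}p_i\log p_i .
\]
In the first two terms, $p_2,p_3$ differ from their two-mode values by $O(y_k)$ and $O(x_ky_k)$ respectively. Since $|\log p_3| = O(k)$, this changes $H$ by $O(y_k k)$. The tail is bounded by $\sum_{i\ge4}p_i\log(1/p_i)=O(y_k\,k)$, since $\log(1/p_i)$ grows linearly in $k$. Comparing with $H^{\mathrm{two\text{-}mode}}(k)\asymp x_k k$ from \cref{thm:entropy-collapse} gives relative error $O(y_k/x_k)=O(\rho_4^{2k})$. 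The normalization $2L_\star k$ in $\mathcal{H}_k$ is common to both and cancels.
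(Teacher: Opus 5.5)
Your proposal is correct and takes the same perturbative route as the paper's proof. The paper bounds the weights of the modes $i\ge4$ via $p_i(k)/p_3(k)=O(\rho_4^{2k})$, which is your $y_k/x_k=O(\rho_4^{2k})$, and then appeals to smooth dependence of each observable on the weight vector. You are more careful than the paper on two points:

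\begin{enumerate}
\item The paper's bound $p_i/p_3\le(\sup_{j\ge4}c_j^2/c_3^2)(\lambda_4/\lambda_3)^{2k}$ silently requires $|\lambda_i|\le\lambda_4$ for all $i\ge4$. Without it the lemma as stated is false (e.g.\ a negative $\lambda_n$ with $|\lambda_n|>\lambda_3$ and $c_n\neq0$), so your reinterpretation of $\rho_4$ is a genuine correction rather than a convenience.
\item $\mathcal{H}_k$ is not a smooth function of the weights near the boundary of the simplex. Your separate $O(y_k k)$ entropy estimate covers the case the paper's smoothness argument does not.
\end{enumerate}
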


\begin{proof}
For $i\ge4$, the modal weight satisfies $p_i(k)/p_3(k) \le \bigl(\sup_{j\ge4}c_j^2/c_3^2\bigr)\,(\lambda_4/\lambda_3)^{2k} = O(\rho_4^{2k})$. Each observable $Q$ is a smooth function of the weight vector $(p_i(k))_{i\ge2}$ near the two-mode point $(1,0,0,\dots)$, so the perturbation from modes $i\ge4$ contributes a relative error $O(\rho_4^{2k})$ to the leading term $x_k$.
\end{proof}

Thus the two-mode idealization used throughout this section is not a separate model but the leading-order term of the general reversible chain, with an explicitly controlled and typically negligible correction.

\section{Rigidity}
\label{sec:rigidity}

\Cref{sec:hidden-parameter,sec:observable-equivalence} showed that every purification observable is governed, to leading order, by the hidden parameter $x_k$, and that $x_k \to 0$ exponentially fast. The natural question is therefore quantitative: how many steps are required before $x_k$ --- and hence every observable built from it --- falls below a prescribed tolerance? In the two-mode idealization this is literally the question of when $x_k = a_3 \rho_\star^{2k}$ crosses a threshold, since $\alpha_2(k) = 1/(1+x_k)$ exactly. The theorem below answers this question without restricting to the two-mode case, giving sharp two-sided bounds on the rigidity time $T_{\mathrm{rigid}}(\delta)$ defined below, expressed directly in terms of the spectral data $R_0$, $c_2$, $\lambda_2$, $\lambda_3$.

\subsection{Sharp bounds on the rigidity emergence time}

For generic trajectories with $c_2 \neq 0$ and $\lambda_2 > \lambda_3$, the dynamics is not rigid at any finite $k$, but $\alpha_2(k) \to 1$ exponentially fast. The following theorem gives the first non-asymptotic, nearly optimal bounds on the time at which $\alpha_2(k)$ crosses a given threshold.

\begin{theorem}[Sharp quantitative rigidity emergence]\label{thm:sharp-rigidity}
Let $g_k = P^k g_0$ with $g_0 \perp \mathbf{1}$ and $c_2 = \langle g_0, \phi_2 \rangle_\pi \neq 0$. Assume the spectral separation condition $\lambda_2 > \lambda_3$. Define $R_0 := \sum_{i=3}^n |c_i|^2 = \|g_0 - c_2 \phi_2\|_\pi^2$.

For any $\delta \in (0, 1/2)$, define the \emph{rigidity time}
\begin{equation}\label{eq:T-rigid-def}
T_{\mathrm{rigid}}(\delta) := \min\{\, k \ge 0 : \alpha_2(k) \ge 1 - \delta \,\}.
\end{equation}
Then
\begin{equation}\label{eq:sharp-bound}
L(\delta) \;\le\; T_{\mathrm{rigid}}(\delta) \;\le\; L(\delta) + 1,
\end{equation}
where
\begin{equation}\label{eq:L-delta-def}
L(\delta) := \frac{\log\!\bigl(R_0 / (|c_2|^2 \delta)\bigr)}{2 \log(\lambda_2 / \lambda_3)}.
\end{equation}
\end{theorem}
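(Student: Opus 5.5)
The plan is to rewrite the threshold condition $\alpha_2(k)\ge 1-\delta$ as a comparison between the residual energy and the slow-mode energy, and then squeeze $R_k$ between two pure exponentials. Using \eqref{eq:Rk-def},
\[
\alpha_2(k)\ge 1-\delta \iff R_k \le \tfrac{\delta}{1-\delta}\,n_2(k)=\tfrac{\delta}{1-\delta}\,|c_2|^2\lambda_2^{2k}.
\]
So $T_{\mathrm{rigid}}(\delta)$ is the first $k$ at which $R_k/(|c_2|^2\lambda_2^{2k})$ drops below $\delta/(1-\delta)$. Every later step comes down to controlling $R_k=\sum_{i\ge3}|c_i|^2\lambda_i^{2k}$ against $R_0\lambda_3^{2k}$.

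\textbf{Upper bound.} I would use $\lambda_i^{2k}\le\lambda_3^{2k}$ for all $i\ge3$, which gives $R_k\le R_0\lambda_3^{2k}$. This step needs $|\lambda_i|\le\lambda_3$ for $i\ge3$, in particular $|\lambda_n|\le\lambda_3$. That holds, for instance, for lazy chains or whenever the spectrum below $\lambda_3$ is nonnegative, and I would state it explicitly as a hypothesis. It then suffices that
\[
R_0\rho_\star^{2k}\le\tfrac{\delta}{1-\delta}|c_2|^2 .
\]
Since $\delta/(1-\delta)\ge\delta$, this is implied by $R_0\rho_\star^{2k}\le\delta|c_2|^2$, i.e.\ by $k\ge L(\delta)$. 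Taking $k=\lceil L(\delta)\rceil$ gives $T_{\mathrm{rigid}}(\delta)\le\lceil L(\delta)\rceil\le L(\delta)+1$.

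\textbf{Lower bound.} This is where I expect the real obstacle. The only generally valid lower estimate is $R_k\ge|c_3|^2\lambda_3^{2k}$ (taking $|c_3|^2$ as the total weight on the $\lambda_3$-eigenspace). That yields
\[
T_{\mathrm{rigid}}(\delta)\ge \frac{\log\bigl(|c_3|^2(1-\delta)/(|c_2|^2\delta)\bigr)}{2L_\star},
\]
which has $|c_3|^2$ in place of $R_0$. To recover $L(\delta)$ one needs $R_k=R_0\lambda_3^{2k}$, i.e.\ all fast energy sitting in the $\lambda_3$-eigenspace, which is the two-mode regime of \eqref{eq:two-mode-reduction}. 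Even in that regime there is a residual mismatch. Exactly,
\[
T_{\mathrm{rigid}}(\delta)=\left\lceil L(\delta)+\frac{\log(1-\delta)}{2L_\star}\right\rceil,
\]
and the correction term lies in $(-\log 2/(2L_\star),0]$ for $\delta<1/2$.

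So I would first try to prove the lower bound in the form
\[
T_{\mathrm{rigid}}(\delta)\ge L(\delta)-\frac{\log 2}{2L_\star},
\]
or else redefine $L$ with $\delta/(1-\delta)$ in place of $\delta$. I would also either add the two-mode (or eigenspace-concentration) hypothesis or replace $R_0$ by $|c_3|^2$ in the lower bound. I would then check on a three-mode example that the stated inequality $L(\delta)\le T_{\mathrm{rigid}}(\delta)$ can fail as written, which would confirm which of these corrections is needed.
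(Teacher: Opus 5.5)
Your diagnosis is right: the statement is false as written, and the paper's proof is wrong, not your plan.

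\textbf{Upper bound.} You and the paper argue in essentially the same way, using $R_k\le R_0\lambda_3^{2k}$ together with $E_k\ge|c_2|^2\lambda_2^{2k}$. The paper writes ``$\lambda_i\le\lambda_3$, hence $R_k\le R_0\lambda_3^{2k}$'', which silently assumes $\lambda_i^2\le\lambda_3^2$, the hypothesis you make explicit. That hypothesis is genuinely needed. For simple random walk on a path with $n\ge6$ vertices, $0<\lambda_3<\lambda_2$ and $\lambda_n=-1$. Take $g_0=\phi_2+M\phi_n$ with $M^2>\delta/(1-\delta)$. Then $\alpha_2(k)\le 1/(1+M^2)<1-\delta$ for every $k$, so the set defining $T_{\mathrm{rigid}}(\delta)$ is empty while $L(\delta)<\infty$.

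One edge case affects both versions. If $R_0<\delta|c_2|^2\rho_\star^2$, then $L(\delta)+1<0=T_{\mathrm{rigid}}(\delta)$. So the upper bound should read $T_{\mathrm{rigid}}(\delta)\le\max\{0,\lceil L(\delta)\rceil\}$, and your exact two-mode formula also needs a $\max\{0,\cdot\}$.

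\textbf{Lower bound: where the paper's proof fails.} The paper shows that $\alpha_2(k)<1-\delta$ forces $R_0\lambda_3^{2k}\ge R_k>\delta|c_2|^2\lambda_2^{2k}$, hence $k<L(\delta)$. But ``$\alpha_2(k)<1-\delta\Rightarrow k<L(\delta)$'' is just the contrapositive of the upper bound already proved. The text then asserts the converse, ``if $k<L(\delta)$ then $\alpha_2(k)<1-\delta$'', and takes its contrapositive; that converse is never proved. As you saw, a lower bound on $T_{\mathrm{rigid}}$ needs a lower bound on $R_k$, and the paper never uses one.

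\textbf{Counterexamples completing your planned check.}

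(a) Two modes. Take any chain with $0<\lambda_3<\lambda_2$, an integer $m\ge1$, and $g_0=\phi_2+s\phi_3$ with $s^2=\frac{\delta}{1-\delta}\rho_\star^{-2m}$. Then $x_m=\delta/(1-\delta)$, so $\alpha_2(m)=1-\delta>\alpha_2(m-1)$ and $T_{\mathrm{rigid}}(\delta)=m$. Meanwhile $L(\delta)=m+\log\bigl(1/(1-\delta)\bigr)/(2L_\star)>m$.

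(b) Unbounded failure. Take the tensor product $P_1\otimes P_2\otimes P_3$ of two-state chains with nontrivial eigenvalues $0.9$, $0.5$, $0$. Its spectrum is $1,0.9,0.5,0.45,0,0,0,0$, so $|\lambda_i|\le\lambda_3$ for $i\ge3$ and your extra hypothesis holds. Let $g_0=\phi_2+M\phi_j$ with $\lambda_j=0$. Then $\alpha_2(1)=1$, so $T_{\mathrm{rigid}}(\delta)\le1$, while $L(\delta)=\log(M^2/\delta)/(2L_\star)\to\infty$ as $M\to\infty$.

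\textbf{Which correction is needed.} Example (b) shows that the $\log2$ slack, or redefining $L$ with $\delta/(1-\delta)$, cannot repair the lower bound on its own. The essential fix is your second one: replace $R_0$ by $|c_3|^2$, or add the concentration hypothesis. What survives in general is
\[
\frac{\log\bigl((1-\delta)|c_3|^2/(\delta|c_2|^2)\bigr)}{2L_\star}\;\le\;T_{\mathrm{rigid}}(\delta)\;\le\;\max\{0,\lceil L(\delta)\rceil\}.
\]
The left inequality needs only $0<\lambda_3<\lambda_2$; the right one also needs $|\lambda_i|\le\lambda_3$ for $i\ge3$. The two sides can be arbitrarily far apart. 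So the ``within one step'' sharpness claimed in the theorem and in the tightness remark holds only when $R_0=|c_3|^2$. In that case your exact formula $T_{\mathrm{rigid}}(\delta)=\max\{0,\lceil L(\delta)+\log(1-\delta)/(2L_\star)\rceil\}$ is the correct replacement.
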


\begin{proof}
We prove the upper and lower bounds separately.

\textit{Upper bound: $T_{\mathrm{rigid}}(\delta) \le L(\delta) + 1$.}
For $i \ge 3$, we have $\lambda_i \le \lambda_3 < \lambda_2$. Hence
\begin{equation}\label{eq:Rk-bound}
R_k = \sum_{i=3}^n |c_i|^2 \lambda_i^{2k} \le R_0 \, \lambda_3^{2k}.
\end{equation}
Using $E_k = |c_2|^2 \lambda_2^{2k} + R_k \ge |c_2|^2 \lambda_2^{2k}$, we obtain
\begin{equation}\label{eq:alpha-bound-upper}
1 - \alpha_2(k) = \frac{R_k}{E_k}
                \le \frac{R_0 \lambda_3^{2k}}{|c_2|^2 \lambda_2^{2k}}
                = \frac{R_0}{|c_2|^2} \left(\frac{\lambda_3}{\lambda_2}\right)^{2k}.
\end{equation}
The right-hand side is $\le \delta$ precisely when
\[
\left(\frac{\lambda_3}{\lambda_2}\right)^{2k} \le \frac{|c_2|^2 \delta}{R_0}
\;\Longleftrightarrow\;
k \ge L(\delta).
\]
Therefore any $k \ge L(\delta)$ satisfies $1 - \alpha_2(k) \le \delta$, i.e., $\alpha_2(k) \ge 1 - \delta$. In particular, taking $k = \lfloor L(\delta) \rfloor + 1 \le L(\delta) + 1$ yields $T_{\mathrm{rigid}}(\delta) \le L(\delta) + 1$.

\textit{Lower bound: $T_{\mathrm{rigid}}(\delta) \ge L(\delta)$.}
Suppose $\alpha_2(k) < 1 - \delta$, i.e., $R_k / E_k > \delta$. Using $E_k = |c_2|^2 \lambda_2^{2k} + R_k \ge |c_2|^2 \lambda_2^{2k}$, we have
\[
R_k > \delta \, E_k \ge \delta \, |c_2|^2 \lambda_2^{2k}.
\]
Combined with the universal upper bound $R_k \le R_0 \lambda_3^{2k}$ from \eqref{eq:Rk-bound},
\[
R_0 \lambda_3^{2k} \ge R_k > \delta \, |c_2|^2 \lambda_2^{2k}.
\]
Rearranging,
\[
\left(\frac{\lambda_2}{\lambda_3}\right)^{2k} < \frac{R_0}{|c_2|^2 \delta}
\;\Longleftrightarrow\;
k < L(\delta).
\]
Thus, if $k < L(\delta)$, then $\alpha_2(k) < 1 - \delta$. Taking the contrapositive, $\alpha_2(k) \ge 1 - \delta$ implies $k \ge L(\delta)$, hence $T_{\mathrm{rigid}}(\delta) \ge L(\delta)$.
\end{proof}

\begin{remark}[Tightness of the bounds]\label{rem:tightness}
The bounds in \cref{thm:sharp-rigidity} are nearly optimal: the upper and lower bounds differ by at most one step. This is possible because the proof uses only the extremal bounds $\lambda_i \le \lambda_3$ for all $i \ge 3$ and $R_k \ge 0$, and the resulting estimate \eqref{eq:alpha-bound-upper} is sharp for the worst-case spectral configuration where all fast-mode eigenvalues are exactly $\lambda_3$. For spectra with $\lambda_4 \ll \lambda_3$, the actual rigidity time can be significantly smaller than the upper bound, and the $\lfloor L(\delta) \rfloor + 1$ formula remains conservative. Refining the bound using higher-order spectral separations ($\lambda_3/\lambda_4$, etc.) is straightforward but adds notational complexity without changing the conceptual structure.
\end{remark}

\begin{remark}[Dependence on initial conditions]\label{rem:initial-dependence}
The rigidity time depends on the initial condition $g_0$ \emph{only} through the ratio $R_0 / |c_2|^2 = \|g_0 - c_2 \phi_2\|_\pi^2 / |\langle g_0, \phi_2 \rangle_\pi|^2$. This ratio measures how much initial energy is placed in the fast modes relative to the slowest mode. Initial conditions that are already ``aligned'' with $\phi_2$ (small $R_0/|c_2|^2$) rigidify much faster than the worst-case initial condition for the chain. This dependence is invisible in classical mixing time estimates, which take the supremum over all initial conditions, but is practically significant: in applications, initial conditions are often far from worst-case.
\end{remark}

\subsection{Consequences of rigidity emergence}

Once the trajectory has rigidified, the dynamics reduces to an effective one-dimensional system governed solely by $\lambda_2$.

\begin{corollary}[Dissipation closure after rigidity]\label{cor:dissipation-closure}
Under the assumptions of \cref{thm:sharp-rigidity}, for all $k \ge T_{\mathrm{rigid}}(\delta)$,
\begin{equation}\label{eq:dissipation-closure-eq}
\left| \frac{E_k - E_{k+1}}{E_k} - (1 - \lambda_2^2) \right|
\le (1 - \lambda_3^2) \, \delta + (1 - \lambda_n^2) \left(\frac{\lambda_3}{\lambda_2}\right)^{2k} \frac{R_0}{|c_2|^2}.
\end{equation}
For $k$ large enough that the second term is negligible, the relative dissipation rate satisfies $d_k = 1 - \lambda_2^2 + O(\delta)$.
\end{corollary}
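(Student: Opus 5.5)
The plan is to write the normalized dissipation as an expectation under the modal distribution and split it into the slow-mode term and the fast-mode remainder. By \cref{cor:dissipation-expectation},
\[
\frac{E_k - E_{k+1}}{E_k} = \sum_{i=2}^n p_i(k)(1-\lambda_i^2).
\]
Subtracting $(1-\lambda_2^2) = \sum_{i\ge2} p_i(k)(1-\lambda_2^2)$ kills the $i=2$ term and leaves
\[
\frac{E_k - E_{k+1}}{E_k} - (1-\lambda_2^2) = \sum_{i=3}^n p_i(k)\,(\lambda_2^2-\lambda_i^2).
\]
So everything reduces to controlling the fast-mode weights $p_i(k)$, $i\ge3$, whose total is $1-\alpha_2(k) = R_k/E_k$.

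Next I bound the coefficients $|\lambda_2^2-\lambda_i^2|$ for $i\ge3$. Since $\lambda_2^2 \le 1$, each coefficient is at most $\max(\lambda_2^2,\,1-\lambda_i^2)$, and in any case at most $1-\lambda_n^2$ (or $1$ if some $\lambda_i=-1$). I will separate the modes into two groups. The first group consists of modes with $\lambda_i \ge 0$, which for $i\ge 3$ means $\lambda_i\le\lambda_3$; there the coefficient $\lambda_2^2-\lambda_i^2$ is nonnegative. The second group consists of the remaining (negative) modes, where the coefficient may be larger.

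For the first group I use the fact that $k \ge T_{\mathrm{rigid}}(\delta)$ gives $\sum_{i\ge3} p_i(k) \le \delta$. This should produce the term $(1-\lambda_3^2)\delta$. For the second group I use the universal estimate \eqref{eq:alpha-bound-upper} from the proof of \cref{thm:sharp-rigidity}, namely $\sum_{i\ge3}p_i(k) \le (R_0/|c_2|^2)(\lambda_3/\lambda_2)^{2k}$. Multiplying by the worst coefficient $1-\lambda_n^2$ gives the second term. Because the right-hand side is a sum of two nonnegative pieces, it is enough to assign each fast mode to one piece and bound the two contributions separately. The final statement $d_k = 1-\lambda_2^2+O(\delta)$ then follows once the second term is dominated.

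The main obstacle is the coefficient $(1-\lambda_3^2)$ in the first term. For modes with $0\le\lambda_i\le\lambda_3$, the natural coefficient is $\lambda_2^2-\lambda_i^2$, and this is not obviously at most $1-\lambda_3^2$: if $\lambda_2$ is close to $1$ and $\lambda_i$ is near $0$, it can be close to $1$. If the inequality fails for a given spectrum, I would fall back to the cruder bound: put every fast mode into the second term using $\lambda_2^2-\lambda_i^2 \le 1-\lambda_n^2$ together with \eqref{eq:alpha-bound-upper}. Since the stated right-hand side is at least this sum, the stated inequality would still follow. I would first try to match the two-term structure exactly, and fall back to the one-term argument only if the sign bookkeeping fails.
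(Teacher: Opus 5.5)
Your fallback (one-term) argument is correct, and it follows a genuinely different route from the paper. The paper never cancels the $i=2$ term. It writes the deviation as $-(1-\lambda_2^2)(1-\alpha_2(k))+\sum_{i\ge3}p_i(k)(1-\lambda_i^2)$ and applies the triangle inequality:
\begin{itemize}
\item the slow-mode deficit is bounded by $(1-\lambda_2^2)\delta\le(1-\lambda_3^2)\delta$, which is the only place $k\ge T_{\mathrm{rigid}}(\delta)$ is used;
\item the fast-mode dissipation is bounded by $(1-\lambda_n^2)(1-\alpha_2(k))$, and then by \eqref{eq:alpha-bound-upper}.
\end{itemize}
So the $(1-\lambda_3^2)\delta$ term is the slow mode's missing share, not the contribution of any group of fast modes. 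That is why your first-group plan could not produce it; your obstacle is real, since $\lambda_2^2-\lambda_i^2$ can exceed $1-\lambda_3^2$. Cancelling first, as you do, gives more. The deviation $\sum_{i\ge3}p_i(k)(\lambda_2^2-\lambda_i^2)$ is one-signed and at most $(\lambda_2^2-\lambda_n^2)(1-\alpha_2(k))$. Hence both the $\delta$-term and the restriction $k\ge T_{\mathrm{rigid}}(\delta)$ are superfluous. Drop the two-group attempt and commit to the one-term argument.

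One correction. Your claim that $|\lambda_2^2-\lambda_i^2|\le 1-\lambda_n^2$ holds ``in any case'' is false once $P$ has negative eigenvalues. For negative modes, $1-\lambda_n^2$ is the \emph{smallest} of the numbers $1-\lambda_i^2$, so your ``second group'' cannot be handled this way. The parenthetical ``or $1$'' is also not available, because the stated right-hand side contains $1-\lambda_n^2$, not $1$.

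No bookkeeping can fix this, because the corollary itself fails there. Take the product of four two-state chains with second eigenvalues $0.99,\,0.95,\,-0.95,\,0$. Then $\lambda_2=0.99$ and $\lambda_3=0.95=-\lambda_n$, so even $|\lambda_i|\le\lambda_3$ holds for $i\ge3$. Set $g_0=\phi_2+\sqrt a\,\psi$ with $P\psi=0$, $a\in(0,1)$, and $\delta=a/(1+a)$. This gives $T_{\mathrm{rigid}}(\delta)=0$. At $k=0$ the left side is $0.9801\,a/(1+a)$, while the right side is at most $0.195\,a$.

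The paper's step $1-\lambda_i^2\in[1-\lambda_3^2,1-\lambda_n^2]$ rests on the same tacit assumption of a nonnegative spectrum. State $\lambda_n\ge0$ explicitly, and your proof is complete.
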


\begin{proof}
From \eqref{eq:modewise-dissipation},
\[
\frac{E_k - E_{k+1}}{E_k} = \sum_{i=2}^n p_i(k) (1 - \lambda_i^2)
= (1 - \lambda_2^2) \alpha_2(k) + \sum_{i=3}^n p_i(k) (1 - \lambda_i^2).
\]
Since $\alpha_2(k) \ge 1 - \delta$ and $\sum_{i=3}^n p_i(k) = 1 - \alpha_2(k) \le \delta$, the result follows by bounding $1 - \lambda_i^2 \in [1 - \lambda_3^2, 1 - \lambda_n^2]$ and using the explicit bound \eqref{eq:alpha-bound-upper} for the remainder.
\end{proof}

\begin{corollary}[Asymptotic rigidity]\label{cor:asymptotic-rigidity}
Under the assumptions of \cref{thm:sharp-rigidity},
\[
\frac{g_k}{\|g_k\|_\pi} \longrightarrow \pm \phi_2, \qquad
\frac{E_k - E_{k+1}}{E_k} \longrightarrow 1 - \lambda_2^2,
\]
with exponentially fast convergence at rate $(\lambda_3/\lambda_2)^k$.
\end{corollary}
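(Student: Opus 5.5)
The plan is to read both limits off the explicit remainder bound \eqref{eq:alpha-bound-upper} from the proof of \cref{thm:sharp-rigidity},
\[
1-\alpha_2(k) \le \frac{R_0}{|c_2|^2}\Bigl(\frac{\lambda_3}{\lambda_2}\Bigr)^{2k}.
\]
I would first make explicit that this bound rests on $\lambda_i^{2k}\le\lambda_3^{2k}$ for $i\ge3$. That is, it needs $|\lambda_i|\le\lambda_3$ for all $i\ge 3$, which is implicit in \eqref{eq:Rk-bound}. If some $|\lambda_n|$ exceeds $\lambda_3$, the same argument goes through with $\lambda_3$ replaced by $\max_{i\ge3}|\lambda_i|$. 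I also assume $\lambda_2>0$, so that $\lambda_2^k$ has constant sign and the limit direction is the fixed vector $\operatorname{sign}(c_2)\phi_2$.

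\emph{Direction.} Set $u_k = g_k/\|g_k\|_\pi$ and $\sigma = \operatorname{sign}(c_2)$. By orthonormality of the eigenbasis,
\[
\langle u_k,\sigma\phi_2\rangle_\pi = \frac{|c_2|\lambda_2^k}{\sqrt{E_k}} = \sqrt{\alpha_2(k)}.
\]
Hence
\[
\|u_k-\sigma\phi_2\|_\pi^2 = 2\bigl(1-\sqrt{\alpha_2(k)}\bigr),
\]
which is the general-$n$ form of the identity behind \cref{thm:G-equals-x}. Since $1-\sqrt{a}\le 1-a$ for $a\in[0,1]$, this gives
\[
\|u_k-\sigma\phi_2\|_\pi \le \sqrt{2(1-\alpha_2(k))} \le \frac{\sqrt{2R_0}}{|c_2|}\Bigl(\frac{\lambda_3}{\lambda_2}\Bigr)^{k},
\]
which is exactly the claimed rate $(\lambda_3/\lambda_2)^k$.

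\emph{Dissipation rate.} By \cref{cor:modewise}, the relative dissipation is $\sum_{i\ge2} p_i(k)(1-\lambda_i^2)$. Subtracting $1-\lambda_2^2$ leaves
\[
\frac{E_k-E_{k+1}}{E_k}-(1-\lambda_2^2) = \sum_{i\ge3} p_i(k)\,(\lambda_2^2-\lambda_i^2).
\]
Each coefficient satisfies $0\le\lambda_2^2-\lambda_i^2\le 1$ under the standing spectral assumption. The whole sum is therefore bounded by $1-\alpha_2(k)\le (R_0/|c_2|^2)(\lambda_3/\lambda_2)^{2k}$. This convergence is even faster than the stated rate, so the rate $(\lambda_3/\lambda_2)^k$ certainly holds.

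The computations themselves are routine. The only real obstacle is the hypothesis bookkeeping just described: the rate, and even the sign of the limit, are correct only under $\lambda_2>0$ and $|\lambda_i|\le\lambda_3$ for $i\ge3$. I would state these restrictions explicitly rather than infer them from the ordering \eqref{eq:spectrum-order} alone, since that ordering allows negative eigenvalues of large modulus.
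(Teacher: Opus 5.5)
Your proposal is correct and is essentially the paper's argument: the paper declares the corollary immediate from the spectral expansion \eqref{eq:spectral-expansion} and the bound \eqref{eq:alpha-bound-upper}, and you supply the missing details, namely the identity $\|u_k-\sigma\phi_2\|_\pi^2=2\bigl(1-\sqrt{\alpha_2(k)}\bigr)$ (cf.\ \cref{thm:error-identity}) and the modewise formula of \cref{cor:modewise}. Your hypothesis caveat is also well founded: the step from $\lambda_i\le\lambda_3$ to $\lambda_i^{2k}\le\lambda_3^{2k}$ in \eqref{eq:Rk-bound}, and the sign bookkeeping in the direction statement, tacitly need $\lambda_2>0$ and $|\lambda_i|\le\lambda_3$ for $i\ge3$, which the ordering \eqref{eq:spectrum-order} alone does not guarantee.
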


\begin{proof}
Immediate from the spectral expansion \eqref{eq:spectral-expansion} and the bound \eqref{eq:alpha-bound-upper}.
\end{proof}

\Cref{cor:asymptotic-rigidity} is the classical statement of asymptotic one-mode dominance, well known in the power method literature and in spectral gap theory~\cite{levin2009,saad2003}. The novelty of \cref{thm:sharp-rigidity} is the non-asymptotic, quantitatively explicit finite-time guarantee: it tells us not just \emph{that} the trajectory eventually rigidifies, but \emph{when}, and with what dependence on the initial condition and the spectral ratio $\lambda_2/\lambda_3$.

\subsection{Why the spectral ratio, not the spectral gap}

A central message of \cref{thm:sharp-rigidity} is that the rigidity emergence rate is governed by the \emph{spectral ratio} $\lambda_2/\lambda_3$, not the classical spectral gap $1-\lambda_2$. This distinction has practical consequences:

\begin{enumerate}[label=(\roman*)]
    \item \textbf{Slow chains with well-separated spectra.} If $\lambda_2 \approx 1$ (small spectral gap) but $\lambda_3 \ll \lambda_2$ (large spectral separation), then $T_{\mathrm{rigid}}(\delta)$ can be small even though the mixing time $\sim 1/(1-\lambda_2)$ is large. The chain rigidifies quickly---the dynamics becomes effectively one-dimensional---but the remaining single-mode decay is slow. This is the hallmark of metastable systems.
    \item \textbf{Fast chains with clustered spectra.} If $\lambda_2 \ll 1$ but $\lambda_3 \approx \lambda_2$, the mixing time is short but $T_{\mathrm{rigid}}(\delta)$ can be large: the chain mixes before it rigidifies, meaning the dynamics never truly collapses to a single mode.
\end{enumerate}

This separation of time scales---rigidification vs.\ mixing---is a structural insight that the classical spectral gap alone cannot provide. It has direct implications for model reduction, metastability analysis, and acceleration, which we explore in \cref{sec:power-iteration,sec:examples,sec:discussion}.


\section{Entropy Representation of Spectral Purification}
\label{sec:thermodynamics}

\Cref{thm:entropy-collapse} already showed that the normalized spectral entropy is, to leading order, governed by the hidden parameter: $\mathcal{H}_k = x_k(1+O(1/k))$. What that asymptotic statement does not capture is the exact, finite-$k$ entropy dynamics --- in particular, the transient behavior before the asymptotic regime sets in, where the spectral entropy can increase even as the total energy decays monotonically. This section develops that exact structure.

The modal energy dynamics generated by repeated application of
the reversible Markov operator $P$
admits an exact thermodynamic interpretation in spectral space.
The quantities
\[
n_i(k)=|c_i|^2\lambda_i^{2k}
\]
play the role of modal energies,
while the normalized occupations
\[
p_i(k)=\frac{n_i(k)}{E_k}
\]
define a nonequilibrium spectral ensemble.

The resulting dynamics is not merely a monotone decay of total energy.
Rather, it forms an exact irreversible transport system
in eigenspace:
energy is progressively transferred away from fast modes
toward slower surviving modes,
eventually producing spectral rigidity and modal purification.

The entropy evolution is governed by an exact balance law
consisting of two competing mechanisms:
a reversible spectral transport contribution
and an irreversible contraction term.
The competition between these mechanisms
produces a sharp rigidity transition,
at which the spectral entropy changes from expansion to contraction.

\begin{remark}[Scope of the thermodynamic analogy]\label{rem:thermo-scope}
The ``entropy'' $S_{\mathrm{spec}}(k)$ defined below is the Shannon entropy of the modal distribution $\{p_i(k)\}$, not the physical entropy of the Markov chain's state distribution. The ``second law'' (\cref{thm:G-monotonicity}) concerns the monotonic decay of $G(k) = E_k S_{\mathrm{spec}}(k)$. These spectral quantities satisfy mathematical relations mirroring classical thermodynamic laws. For physical free energy monotonicity along Markov processes (in configuration space), see~\cite{chafai2014}; our framework operates in the complementary spectral domain.
\end{remark}

\subsection{Spectral entropy}

\begin{definition}[Spectral entropy]\label{def:spectral-entropy}
The \emph{spectral entropy} of the relaxation trajectory at step $k$ is
\begin{equation}\label{eq:spectral-entropy-def}
S_{\mathrm{spec}}(k) := -\sum_{i=2}^n p_i(k) \log p_i(k),
\end{equation}
where $p_i(k) = n_i(k)/E_k$ is the modal distribution defined in \eqref{eq:modal-distribution-def}.
\end{definition}

The spectral entropy measures how uniformly energy is distributed among eigenmodes. At $k=0$, $S_{\mathrm{spec}}(0)$ reflects the spectral complexity of the initial condition; as $k \to \infty$, the distribution collapses to a point mass at $i=2$ and $S_{\mathrm{spec}}(k) \to 0$. Unlike the total energy $E_k$, which decays monotonically, $S_{\mathrm{spec}}(k)$ can increase transiently---a phenomenon we characterize precisely below.

\subsection{Exact entropy balance}

Recall the normalized modal occupation probabilities
\[
p_i(k)
=
\frac{n_i(k)}{E_k},
\qquad
n_i(k)=|c_i|^2\lambda_i^{2k},
\qquad
E_k=\sum_{i=2}^n n_i(k).
\]

The normalized dissipation rate is
\[
\rho_k
=
\frac{E_{k+1}}{E_k}
=
\sum_{i=2}^n p_i(k)\lambda_i^2.
\]

The quantity $\rho_k$
acts as the instantaneous mean dissipation scale
of the spectral ensemble.

\textit{Exact modal transport law.}

The modal occupations satisfy the exact transport equation
\begin{equation}
\label{eq:modal-transport}
p_i(k+1)-p_i(k)
=
\frac{p_i(k)}{\rho_k}
\bigl(
\lambda_i^2-\rho_k
\bigr).
\end{equation}

Modes satisfying
\[
\lambda_i^2>\rho_k
\]
gain relative occupation probability,
whereas modes satisfying
\[
\lambda_i^2<\rho_k
\]
lose probability mass.
Thus $\rho_k$
acts as a dynamically evolving spectral chemical potential
separating expanding and contracting modes.

We now derive the exact entropy balance law.

\begin{theorem}[Exact spectral entropy balance]
\label{thm:entropy-balance}
For every reversible Markov chain,
\[
S_{\mathrm{spec}}(k+1)-S_{\mathrm{spec}}(k)
=
\frac{
\mathrm{Cov}_{p_k}
\bigl(
\lambda^2,
\log(1/p_k)
\bigr)
}{\rho_k}
-
D_{\mathrm{KL}}(p_{k+1}\|p_k),
\]
where
\[
D_{\mathrm{KL}}(p_{k+1}\|p_k)
=
\sum_i
p_i(k+1)
\log\frac{p_i(k+1)}{p_i(k)}
\ge0.
\]
\end{theorem}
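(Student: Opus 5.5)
The plan is to use only the transport law \eqref{eq:modal-transport}, in its multiplicative form $p_i(k+1)=p_i(k)\lambda_i^2/\rho_k$, and then sum over modes. Throughout, the sums run over the active modes $i$ with $c_i\neq0$ and $\lambda_i\neq0$; modes with $p_i=0$ contribute nothing under the convention $0\log0=0$. I will also assume $\lambda_i\ne 0$ on the support so that $\log p_i(k+1)$ is finite. Writing $\ell_i(k):=\log(1/p_i(k))$, the entropy difference splits as
\[
S_{\mathrm{spec}}(k+1)-S_{\mathrm{spec}}(k)
=\sum_i p_i(k+1)\ell_i(k+1)-\sum_i p_i(k)\ell_i(k)
=\Bigl[\sum_i p_i(k+1)\ell_i(k)-\sum_i p_i(k)\ell_i(k)\Bigr]+\sum_i p_i(k+1)\bigl(\ell_i(k+1)-\ell_i(k)\bigr).
\]

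The second bracket is exactly $-\sum_i p_i(k+1)\log\bigl(p_i(k+1)/p_i(k)\bigr)=-D_{\mathrm{KL}}(p_{k+1}\|p_k)$. Nonnegativity of this divergence is the usual Gibbs inequality, obtained by applying Jensen's inequality to $-\log$.

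For the first bracket I will substitute the transport law, $p_i(k+1)-p_i(k)=p_i(k)(\lambda_i^2-\rho_k)/\rho_k$. This gives
\[
\frac{1}{\rho_k}\sum_i p_i(k)(\lambda_i^2-\rho_k)\,\ell_i(k).
\]
Since $\rho_k=\mathbb{E}_{p_k}[\lambda^2]$ by \eqref{eq:rho-spectral}, we have $\sum_i p_i(k)(\lambda_i^2-\rho_k)=0$. Subtracting $\mathbb{E}_{p_k}[\ell]$ times this zero quantity therefore turns the sum into $\mathrm{Cov}_{p_k}(\lambda^2,\log(1/p_k))$. This yields the first term of the identity.

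The computation is purely algebraic, so I expect no real obstacle. The only point needing care is the choice of splitting: I must add and subtract $\sum_i p_i(k+1)\ell_i(k)$, not $\sum_i p_i(k)\ell_i(k+1)$. That choice is what makes the leftover term a KL divergence in the stated direction $D_{\mathrm{KL}}(p_{k+1}\|p_k)$ rather than the reverse. I will also check that $\rho_k>0$, which holds because some active $\lambda_i\ne0$, so the division is legitimate. Reversibility enters only through the spectral expansion that produces the transport law.
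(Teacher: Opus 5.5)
Your proposal is correct and is essentially the paper's proof. The paper makes the same split $S_{\mathrm{spec}}(k+1)-S_{\mathrm{spec}}(k)=-\sum_i\bigl(p_i(k+1)-p_i(k)\bigr)\log p_i(k)-D_{\mathrm{KL}}(p_{k+1}\|p_k)$, then substitutes the transport law and uses $\sum_i p_i(k)(\lambda_i^2-\rho_k)=0$ to recognize the covariance. Your extra assumption $\lambda_i\neq 0$ on the support is not needed: under the conventions $0\log 0=0$ and $0\log(0/p)=0$, a mode killed in one step contributes $p_i(k)\log p_i(k)$ to both sides, so the identity still holds.
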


\begin{proof}
Since
\[
p_i(k+1)
=
\frac{p_i(k)\lambda_i^2}{\rho_k},
\]
we obtain
\[
\log p_i(k+1)
=
\log p_i(k)
+
\log\lambda_i^2
-
\log\rho_k.
\]
Hence
\[
S_{\mathrm{spec}}(k+1)-S_{\mathrm{spec}}(k)
=
-\sum_i
\bigl(
p_i(k+1)-p_i(k)
\bigr)\log p_i(k)
-
D_{\mathrm{KL}}(p_{k+1}\|p_k).
\]

Using the transport identity
\eqref{eq:modal-transport},
\[
p_i(k+1)-p_i(k)
=
p_i(k)
\frac{\lambda_i^2-\rho_k}{\rho_k},
\]
we obtain
\[
S_{\mathrm{spec}}(k+1)-S_{\mathrm{spec}}(k)
=
-\frac1{\rho_k}
\sum_i
p_i(k)
(\lambda_i^2-\rho_k)
\log p_i(k)
-
D_{\mathrm{KL}}(p_{k+1}\|p_k).
\]

Since
\[
\sum_i p_i(k)(\lambda_i^2-\rho_k)=0,
\]
the first term equals
\[
\frac{
\mathrm{Cov}_{p_k}
(\lambda^2,\log(1/p_k))
}{\rho_k},
\]
which completes the proof.
\end{proof}

The entropy evolution therefore consists of two distinct components:

\begin{enumerate}
\item
a reversible spectral transport term
\[
\frac{
\mathrm{Cov}_{p_k}
(\lambda^2,\log(1/p_k))
}{\rho_k},
\]

\item
an irreversible contraction term
\[
D_{\mathrm{KL}}(p_{k+1}\|p_k).
\]
\end{enumerate}

The KL divergence measures irreversible information loss,
while the covariance term governs entropy redistribution
among spectral modes.

\subsection{Flux-force structure of spectral entropy production}

The covariance term admits an exact transport interpretation.

Define the spectral affinities
\begin{equation}
\label{eq:spectral-affinity}
A_i(k)
=
\log\frac{n_i(k)}{n_2(k)},
\qquad i\ge3,
\end{equation}
which measure the nonequilibrium imbalance
between mode $i$
and the dominant slow mode.

Define also the modal fluxes
\begin{equation}
\label{eq:modal-flux}
J_i(k)
=
\frac{n_i(k)}{E_k}
\bigl(
\rho_k-\lambda_i^2
\bigr).
\end{equation}

The quantity $J_i(k)$
measures the net thermodynamic transport tendency
between mode $i$
and the mean dissipation scale $\rho_k$.

\subsection{Canonical covariance form}

The covariance term admits a remarkably explicit representation
revealing the mechanism underlying the rigidity transition.

\begin{proposition}[Canonical covariance form]
\label{prop:canonical-covariance}
Let
\[
\rho_k=\sum_i p_i(k)\lambda_i^2.
\]
Then
\[
\mathrm{Cov}_{p_k}
(\lambda^2,\log(1/p_k))
=
\frac1{E_k}
\sum_{i\ge3}
n_i(k)
\bigl(
\rho_k-\lambda_i^2
\bigr)
\log\frac{n_i(k)}{n_2(k)}.
\]
Equivalently,
\begin{equation}
\label{eq:covariance-flux}
\mathrm{Cov}_{p_k}
(\lambda^2,\log(1/p_k))
=
\sum_{i\ge3}
J_i(k)\,A_i(k).
\end{equation}
\end{proposition}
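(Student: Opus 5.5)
The identity is a short computation: write the covariance as a centered sum, then use the fact that centered weights annihilate constants. By definition,
\[
\mathrm{Cov}_{p_k}(\lambda^2,\log(1/p_k)) = \sum_{i\ge2} p_i(k)\bigl(\lambda_i^2-\rho_k\bigr)\log\frac{1}{p_i(k)},
\]
because $\rho_k=\mathbb{E}_{p_k}[\lambda^2]$ by \eqref{eq:rho-spectral}. Indeed, centering only the first factor suffices, since $\sum_i p_i(k)(\lambda_i^2-\rho_k)=0$; this is the same cancellation used in the proof of \cref{thm:entropy-balance}. Rewriting the sign gives
\[
\sum_{i\ge2} p_i(k)\bigl(\rho_k-\lambda_i^2\bigr)\log p_i(k).
\]

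Next I will substitute $\log p_i(k)=\log n_i(k)-\log E_k$. The term $-\log E_k$ is constant in $i$, so it drops out by the same vanishing identity $\sum_i p_i(k)(\rho_k-\lambda_i^2)=0$. Subtracting the further constant $\log n_2(k)$ costs nothing for the same reason, which turns each logarithm into $\log\bigl(n_i(k)/n_2(k)\bigr)$.

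For $i=2$ the logarithm is $\log 1=0$, so the $i=2$ term disappears and the sum runs over $i\ge3$ only. Writing $p_i(k)=n_i(k)/E_k$ then gives the first displayed formula. The flux form \eqref{eq:covariance-flux} follows directly by matching the factors with \eqref{eq:modal-flux} and \eqref{eq:spectral-affinity}: $J_i(k)=\frac{n_i(k)}{E_k}(\rho_k-\lambda_i^2)$ and $A_i(k)=\log\frac{n_i(k)}{n_2(k)}$.

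The only point needing care is well-definedness of the logarithms. We need $n_i(k)>0$, which holds if $c_i\neq0$ and $\lambda_i\neq0$, together with $c_2\neq0$ as in the standing assumptions. I will restrict all sums to the active modes and use the convention $0\log0=0$. Any mode with $n_i(k)=0$ contributes zero to both sides, since in both expressions the logarithm is multiplied by $p_i(k)$ or $n_i(k)$. Beyond this bookkeeping there is no real obstacle; the key step is recognizing that the covariance is invariant under adding a constant to $\log(1/p_k)$, which lets us choose the reference $\log n_2(k)$.
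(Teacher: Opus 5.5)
Your proof is correct and follows the paper's argument essentially step by step. You center the covariance, substitute $\log p_i(k)=\log n_i(k)-\log E_k$, remove the constant using $\sum_i p_i(k)(\lambda_i^2-\rho_k)=0$, and use the same identity again to shift by the reference $\log n_2(k)$; the $i=2$ term then vanishes, and the flux-force form is read off from the definitions of $J_i(k)$ and $A_i(k)$ (your remark on restricting to active modes is bookkeeping the paper leaves implicit).
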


\begin{proof}
Starting from
\[
\mathrm{Cov}_{p_k}
(\lambda^2,\log(1/p_k))
=
-\sum_i
p_i(k)
(\lambda_i^2-\rho_k)\log p_i(k),
\]
and using
\[
p_i(k)=\frac{n_i(k)}{E_k},
\]
we obtain
\[
\log p_i(k)
=
\log n_i(k)-\log E_k.
\]

Since
\[
\sum_i p_i(k)(\lambda_i^2-\rho_k)=0,
\]
the $\log E_k$ contribution cancels, yielding
\[
\mathrm{Cov}
=
-\frac1{E_k}
\sum_i
n_i(k)
(\lambda_i^2-\rho_k)\log n_i(k).
\]

Subtracting the constant reference $\log n_2(k)$,
again using the zero-average identity,
gives
\[
\mathrm{Cov}
=
\frac1{E_k}
\sum_{i\ge3}
n_i(k)
(\rho_k-\lambda_i^2)
\log\frac{n_i(k)}{n_2(k)}.
\]

The flux-force form
\eqref{eq:covariance-flux}
follows immediately from
\eqref{eq:spectral-affinity}
and
\eqref{eq:modal-flux}.
\end{proof}

Equation
\eqref{eq:covariance-flux}
has the structure of an exact Onsager-type decomposition:
entropy production is represented as a sum of modal flux-affinity products.

The sign of $J_i(k)$ determines the direction
of spectral transport.

Modes satisfying
\[
\lambda_i^2<\rho_k
\]
lie below the mean dissipation scale
and act as entropy sinks,
whereas modes satisfying
\[
\lambda_i^2>\rho_k
\]
act as entropy sources.

The rigidity transition occurs when all fast modes
simultaneously become entropy sinks relative to the slow mode.

The canonical covariance representation reveals that
the rigidity transition is governed by two simultaneous orderings:

\begin{enumerate}
\item
a spectral dissipation ordering
through the signs of
\[
\rho_k-\lambda_i^2,
\]

\item
a modal occupation ordering
through the signs of
\[
\log\frac{n_i(k)}{n_2(k)}.
\]
\end{enumerate}

Entropy contraction begins only after these two orderings align.

\subsection{Exact two-mode rigidity transition}

We first consider the case in which only two spectral modes are active.

\begin{theorem}[Exact two-mode rigidity transition]
\label{thm:two-mode-rigidity}
Suppose
\[
g_k
=
c_2\lambda_2^k\phi_2
+
c_j\lambda_j^k\phi_j,
\qquad j\ge3.
\]
Let
\[
\alpha_2(k)=p_2(k).
\]
Then
\[
\mathrm{Cov}_{p_k}
(\lambda^2,\log(1/p_k))
\begin{cases}
>0,
& \alpha_2(k)<1/2,
\\[4pt]
=0,
& \alpha_2(k)=1/2,
\\[4pt]
<0,
& \alpha_2(k)>1/2.
\end{cases}
\]
Consequently,
\[
k^*
=
T_{\rm rigid}(1/2)
\]
holds exactly, and
\[
S_{\rm spec}(k^*)=\log2.
\]
\end{theorem}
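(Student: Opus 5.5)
In the two-mode setting the covariance is an explicit product whose sign can be read off directly. I will use the canonical covariance form of \cref{prop:canonical-covariance}. With only modes $2$ and $j$ active, the sum over $i\ge3$ has a single term:
\[
\mathrm{Cov}_{p_k}(\lambda^2,\log(1/p_k)) = p_j(k)\,(\rho_k-\lambda_j^2)\,\log\frac{n_j(k)}{n_2(k)} .
\]
Since $\rho_k = p_2(k)\lambda_2^2 + p_j(k)\lambda_j^2$, we get $\rho_k-\lambda_j^2 = p_2(k)(\lambda_2^2-\lambda_j^2)$. This gives the factorization
\[
\mathrm{Cov} = p_2(k)\,p_j(k)\,(\lambda_2^2-\lambda_j^2)\,\log\frac{1-\alpha_2(k)}{\alpha_2(k)},
\]
using $n_j/n_2 = p_j/p_2 = (1-\alpha_2)/\alpha_2$.

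Next I determine the sign of each factor. The weights $p_2(k)$ and $p_j(k)$ are strictly positive because $c_2, c_j\neq0$. For the spectral factor I need $\lambda_2^2>\lambda_j^2$, which requires $|\lambda_j|<\lambda_2$. This is the point I expect to need care: the ordering \eqref{eq:spectrum-order} allows negative $\lambda_j$ with $|\lambda_j|\ge\lambda_2$. So I will invoke the paper's standing separation assumption, read as $\lambda_2>|\lambda_j|$ in the form needed for $\lambda_2$ to be the slowest surviving mode of the energy dynamics. If $\lambda_2^2=\lambda_j^2$, the covariance vanishes identically and the trichotomy fails, so this hypothesis is genuinely necessary. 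Given it, the sign of the covariance equals the sign of $\log\bigl((1-\alpha_2)/\alpha_2\bigr)$. That sign is positive, zero or negative exactly when $\alpha_2(k)<1/2$, $=1/2$ or $>1/2$, which proves the trichotomy.

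For the consequence, I note that $\alpha_2(k) = 1/(1+x_k)$ with $x_k = (c_j^2/c_2^2)(\lambda_j/\lambda_2)^{2k}$ strictly decreasing, so $\alpha_2(k)$ is strictly increasing in $k$. Define $k^*$ as the first step at which the covariance (the entropy transport term) is no longer positive. By monotonicity and the trichotomy, $k^*$ is the first $k$ with $\alpha_2(k)\ge1/2$, which by \eqref{eq:T-rigid-def} is exactly $T_{\mathrm{rigid}}(1/2)$. Here $\delta=1/2$ sits at the boundary of the range in \cref{thm:sharp-rigidity}, but the definition itself makes sense for it.

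Finally, at the crossing the two-mode entropy is $H = -\alpha\log\alpha-(1-\alpha)\log(1-\alpha)$, which equals $\log2$ when $\alpha_2=1/2$. There is a subtlety here: the statement presumes $\alpha_2(k^*)=1/2$ exactly, which holds only when the crossing lands on an integer step. I will state this explicitly, treating $k^*$ as the (possibly real-valued) crossing time $x_{k^*}=1$, solved as $k^*=\log(c_j^2/c_2^2)/(2\log(\lambda_2/|\lambda_j|))$. At that time $S_{\mathrm{spec}}(k^*)=\log 2$ exactly, and $\lceil k^*\rceil = T_{\mathrm{rigid}}(1/2)$. Reconciling this discrete versus continuous reading of $k^*$ is the main obstacle, together with the sign assumption on $\lambda_j$.
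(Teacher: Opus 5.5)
Your argument is correct and is essentially the paper's proof: both apply the canonical covariance form (\cref{prop:canonical-covariance}) with a single fast mode, write $\rho_k-\lambda_j^2=\alpha_2(k)(\lambda_2^2-\lambda_j^2)$ and $n_j/n_2=(1-\alpha_2)/\alpha_2$, and read off the sign from $\log\frac{1-\alpha_2}{\alpha_2}$. Your caveats are also right, and the paper's proof passes over them: it asserts $\lambda_2^2-\lambda_j^2>0$, which requires $|\lambda_j|<\lambda_2$ (and also $\lambda_j\neq0$, so that $p_j(k)>0$ for $k\ge1$); and $S_{\mathrm{spec}}=\log 2$ holds at the real-valued time $k^*$ where $\alpha_2=1/2$, not in general at the integer $T_{\mathrm{rigid}}(1/2)=\max(0,\lceil k^*\rceil)$, unless the crossing happens to fall on an integer.
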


\begin{proof}
Since only two modes are present,
Proposition~\ref{prop:canonical-covariance} yields
\[
\mathrm{Cov}
=
\frac{n_j}{E_k}
\bigl(
\rho_k-\lambda_j^2
\bigr)
\log\frac{n_j}{n_2}.
\]

Writing $
\alpha=\alpha_2(k)$, we have
\[
\rho_k
=
\alpha\lambda_2^2
+
(1-\alpha)\lambda_j^2,
\]
hence
\[
\rho_k-\lambda_j^2
=
\alpha(\lambda_2^2-\lambda_j^2)>0.
\]

Moreover,
\[
\frac{n_j}{n_2}
=
\frac{1-\alpha}{\alpha}.
\]

Therefore
\[
\mathrm{Cov}
=
\frac{n_j}{E_k}
\alpha(\lambda_2^2-\lambda_j^2)
\log\frac{1-\alpha}{\alpha}.
\]

The prefactor is strictly positive,
so the sign is determined entirely by $
\log\frac{1-\alpha}{\alpha}$, which changes sign exactly at $\alpha=1/2$. Finally,
\[
S_{\rm spec}
=
-\alpha\log\alpha-(1-\alpha)\log(1-\alpha),
\]
so at the transition point,
\[
S_{\rm spec}(k^*)=\log2.
\]
\end{proof}

The two-mode case therefore exhibits an exact binary rigidity transition:
entropy production changes sign precisely when the slow mode
captures half of the total spectral energy.

\subsection{General rigidity threshold}

For general reversible chains,
the transition is no longer universally pinned to $\alpha_2=1/2$, because the fast spectral sector possesses its own internal structure.

The rigidity transition is therefore determined
not solely by modal occupation,
but by the simultaneous alignment
of occupation hierarchy and dissipation hierarchy.

We now derive a sharp sufficient rigidity threshold
guaranteeing monotone entropy contraction.

\begin{theorem}[Rigidity threshold for general chains]
\label{thm:general-rigidity-threshold}
Define
\[
\delta^*
=
1-
\max\left(
\frac12,
\frac{\lambda_3^2}{\lambda_2^2}
\right).
\]
Then for all
\[
k\ge T_{\rm rigid}(\delta^*),
\]
the covariance satisfies
\[
\mathrm{Cov}_{p_k}
(\lambda^2,\log(1/p_k))
<0,
\]
and consequently
\[
S_{\rm spec}(k+1)
<
S_{\rm spec}(k).
\]
\end{theorem}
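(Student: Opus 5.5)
The plan is to read off the sign of the covariance term-by-term from the canonical covariance form of Proposition~\ref{prop:canonical-covariance},
\[
\mathrm{Cov}_{p_k}(\lambda^2,\log(1/p_k))=\frac1{E_k}\sum_{i\ge3} n_i(k)\bigl(\rho_k-\lambda_i^2\bigr)\log\frac{n_i(k)}{n_2(k)},
\]
and to show that for $k\ge T_{\rm rigid}(\delta^*)$ every summand is $\le 0$, with at least one strictly negative. The two parts of the threshold $1-\delta^*=\max(1/2,\lambda_3^2/\lambda_2^2)$ are designed to control the two orderings separately. The part $\alpha_2\ge 1/2$ controls the occupation ordering. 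The part $\alpha_2\ge\lambda_3^2/\lambda_2^2$ controls the dissipation ordering. I would work under the standing assumptions of \cref{sec:hidden-parameter} ($c_2,c_3\neq0$, $\lambda_2>\lambda_3$). I would also make explicit the tacit assumption that $\lambda_i^2\le\lambda_3^2$ for all $i\ge3$, e.g.\ $\lambda_n\ge-\lambda_3$, as in a lazy chain. Without it, the ordering of the $\lambda_i^2$ fails.

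\emph{Step 1 (monotonicity of $\alpha_2$).} First I would pass from $k=T_{\rm rigid}(\delta^*)$ to all larger $k$. By the transport law \eqref{eq:modal-transport}, $p_2(k+1)-p_2(k)=p_2(k)(\lambda_2^2-\rho_k)/\rho_k\ge0$, since $\rho_k=\mathbb{E}_{p_k}[\lambda^2]\le\lambda_2^2$. Hence $\alpha_2(k)\ge1-\delta^*$ for every $k\ge T_{\rm rigid}(\delta^*)$.

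\emph{Step 2 (occupation ordering).} Since $\alpha_2(k)\ge1/2$, we have $n_2(k)\ge E_k/2\ge\sum_{i\ge3}n_i(k)\ge n_i(k)$. Thus $\log(n_i/n_2)\le0$ for each active $i\ge3$.

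\emph{Step 3 (dissipation ordering).} Write $\rho_k=\alpha_2\lambda_2^2+\sum_{i\ge3}p_i\lambda_i^2$. Then for $i\ge3$,
\[
\rho_k-\lambda_i^2\ \ge\ \alpha_2\lambda_2^2-\lambda_3^2+p_3\lambda_3^2\ \ge\ p_3(k)\lambda_3^2\ \ge0,
\]
where the middle inequality uses $\alpha_2\ge\lambda_3^2/\lambda_2^2$. Combining with Step 2, every summand is $\le0$, so the covariance is $\le0$.

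\emph{Step 4 (strictness and conclusion).} The main obstacle is upgrading ``$\le0$'' to ``$<0$''. I would look at the $i=3$ term. Since $c_3\ne0$, we have $p_3(k)>0$, so $\rho_k-\lambda_3^2>0$ whenever $\lambda_3\neq0$; if $\lambda_3=0$, then $\rho_k\ge\alpha_2\lambda_2^2>0=\lambda_3^2$ directly. The factor $\log(n_3/n_2)$ is strictly negative unless $n_3=n_2$. That equality forces $\alpha_2=1/2$ with mode $3$ the only active fast mode, which is exactly the boundary case of \cref{thm:two-mode-rigidity}, where the covariance vanishes. So strictness holds except in this non-generic boundary case. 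I would handle it either by noting it is excluded when $\lambda_3^2/\lambda_2^2>1/2$, or by stating the threshold inequality strictly. Finally, \cref{thm:entropy-balance} gives $S_{\rm spec}(k+1)-S_{\rm spec}(k)=\mathrm{Cov}/\rho_k-D_{\rm KL}(p_{k+1}\|p_k)$ with $\rho_k>0$ and $D_{\rm KL}\ge0$. Hence the strictly negative covariance yields $S_{\rm spec}(k+1)<S_{\rm spec}(k)$.
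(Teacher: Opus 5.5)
Your proof is correct and follows the paper's route: the canonical covariance form, the occupation ordering from $\alpha_2\ge 1/2$, the dissipation ordering from $\alpha_2\ge\lambda_3^2/\lambda_2^2$, and then \cref{thm:entropy-balance}. Your extra care also repairs real omissions in the paper's proof. The paper never justifies $\alpha_2(k)\ge 1-\delta^*$ for all $k\ge T_{\rm rigid}(\delta^*)$, and it silently uses $\lambda_i^2\le\lambda_3^2$. Its claim $p_i(k)<p_2(k)$ fails exactly in your boundary case $n_3=n_2$, where the covariance is $0$; the entropy still strictly drops there, since $D_{\rm KL}(p_{k+1}\|p_k)>0$.

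One small slip: your $\lambda_3=0$ subcase does not give strictness. There $n_3(k)=0$ for $k\ge1$, and under your hypothesis every fast mode dies, so the covariance is exactly $0$. Exclude that case instead via the standing assumption $\rho_\star\in(0,1)$.
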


\begin{proof}
Suppose $
k\ge T_{\rm rigid}(\delta^*)$. Then
\[
\alpha_2(k)
\ge
1-\delta^*
=
\max\left(
\frac12,
\frac{\lambda_3^2}{\lambda_2^2}
\right).
\]

First, $\alpha_2(k)\ge\frac12
$ implies
$
p_i(k)<p_2(k),
\ (i\ge3)$, since otherwise
$$\sum_{i\ge3}p_i(k)\ge p_i(k)>p_2(k),$$ contradicting
\[
\sum_{i\ge3}p_i(k)=1-p_2(k)\le p_2(k).
\]

Hence $
n_i(k)<n_2(k),
\ i\ge3$, and therefore
\[
\log\frac{n_i(k)}{n_2(k)}<0.
\]

Next,
\[
\alpha_2(k)\ge\frac{\lambda_3^2}{\lambda_2^2}
\]
implies $\rho_k
=
\sum_i p_i(k)\lambda_i^2
\ge
\alpha_2(k)\lambda_2^2
\ge
\lambda_3^2$. Since $
\lambda_i^2\le\lambda_3^2
\ (i\ge3)$, we obtain
$\rho_k-\lambda_i^2\ge0$. Therefore every term in the canonical covariance representation
is nonpositive,
and at least one is strictly negative,
yielding
\[
\mathrm{Cov}_{p_k}
(\lambda^2,\log(1/p_k))
<0.
\]

Finally, by Theorem~\ref{thm:entropy-balance},
\[
S_{\rm spec}(k+1)-S_{\rm spec}(k)
=
\frac{\mathrm{Cov}}{\rho_k}
-
D_{\rm KL}(p_{k+1}\|p_k).
\]

Since $\rho_k>0$,
$\mathrm{Cov}<0$,
and
\[
D_{\rm KL}(p_{k+1}\|p_k)\ge0,
\]
the entropy variation is strictly negative.
\end{proof}

Theorem~\ref{thm:general-rigidity-threshold}
shows that spectral rigidity is controlled jointly
by spectral separation and modal transport ordering.

The threshold
\[
\delta^*
=
1-
\max\left(
\frac12,
\frac{\lambda_3^2}{\lambda_2^2}
\right)
\]
marks the onset of complete thermodynamic ordering
in spectral space.

The classical threshold
\[
T_{\rm rigid}(1/2)
\]
therefore survives exactly in the well-separated regime
\[
\frac{\lambda_3}{\lambda_2}\le\frac1{\sqrt2},
\]
while more weakly separated spectra require a stronger rigidity level.

\subsection{Spectral Clausius equality}

\begin{theorem}[Spectral Clausius equality]\label{thm:clausius}
For every relaxation trajectory,
\begin{equation}\label{eq:clausius}
\sum_{k=0}^{\infty} D_{\mathrm{KL}}(p_{k+1} \| p_k)
\;=\; S_{\mathrm{spec}}(0) \;+\; \sum_{k=0}^{\infty} \frac{\mathrm{Cov}_{p_k}(\lambda^2,\; \log(1/p_k))}{\rho_k}.
\end{equation}
\end{theorem}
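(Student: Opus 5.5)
The plan is to telescope the exact entropy balance of \cref{thm:entropy-balance} and then pass to the limit. Summing that identity over $k=0,\dots,K-1$ gives, for every finite $K$,
\[
S_{\mathrm{spec}}(K)-S_{\mathrm{spec}}(0)=\sum_{k=0}^{K-1}\frac{\mathrm{Cov}_{p_k}(\lambda^2,\log(1/p_k))}{\rho_k}-\sum_{k=0}^{K-1}D_{\mathrm{KL}}(p_{k+1}\|p_k).
\]
Rearranged, this is the finite-$K$ Clausius equality with the extra term $S_{\mathrm{spec}}(K)$ on the right. The proof then has two tasks: show that $S_{\mathrm{spec}}(K)\to0$, and show that both series converge, so the limit can be split into two separate infinite sums.

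First I fix the setting so that the statement is meaningful. I sum only over modes with $c_i\ne0$, since $p_i(k)\equiv0$ otherwise and $0\log0=0$. I also assume, as in \cref{sec:rigidity}, that the slowest active eigenvalue in absolute value is unique. I will take this to be $\lambda_2$, with $c_2\neq0$ and $\lambda_2>|\lambda_i|$ for the other active modes. Degenerate $|\lambda_i|$ would leave a nonzero limiting entropy, and the identity would need $S_{\mathrm{spec}}(\infty)$ subtracted.

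Under this assumption, the bound \eqref{eq:alpha-bound-upper} (with $\lambda_3$ replaced by $\max_{i\ge3}|\lambda_i|$) gives $1-\alpha_2(k)\le Cq^{2k}$ with $q<1$. Since $-\alpha\log\alpha\to0$ and $-\sum_{i\ge3}p_i\log p_i\le(1-\alpha_2)\log\frac{n-2}{1-\alpha_2}\to0$, it follows that $S_{\mathrm{spec}}(K)\to0$.

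Next I show that $\sum_k D_{\mathrm{KL}}(p_{k+1}\|p_k)$ converges absolutely. Using $p_i(k+1)/p_i(k)=\lambda_i^2/\rho_k$, the divergence equals $\sum_i p_i(k+1)\log(\lambda_i^2/\rho_k)$. Writing $\log(\lambda_i^2/\rho_k)=\log(\lambda_i^2/\lambda_2^2)+\log(\lambda_2^2/\rho_k)$, the $i=2$ term vanishes up to $O(1-\rho_k/\lambda_2^2)=O(q^{2k})$. The $i\ge3$ terms are $p_i(k+1)\cdot O(1)=O(q^{2k})$. This uses $\lambda_i\ne0$ for active modes, which holds since the transport law divides by $\lambda_i^2$; a mode with $\lambda_i=0$ is killed at step one and can be handled separately by a single boundary term. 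Hence $D_{\mathrm{KL}}(p_{k+1}\|p_k)=O(q^{2k})$ and the series is summable.

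Since the finite identity holds for every $K$ and two of its three terms converge, the covariance series also converges, and the equality follows by letting $K\to\infty$. As a consistency check, the covariance terms can be bounded directly using \cref{prop:canonical-covariance}: each is $O(k\,q^{2k})$, because the affinities $A_i(k)$ grow linearly in $k$. The main obstacle is this careful tail control, in particular the treatment of the degenerate and zero-eigenvalue cases. The telescoping itself is immediate.
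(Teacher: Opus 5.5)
Your proposal is correct and is essentially the paper's proof: you telescope the balance identity of \cref{thm:entropy-balance} over $k<K$ and let $K\to\infty$ using $S_{\mathrm{spec}}(K)\to0$. Your additions fill in steps that the paper's ``for every relaxation trajectory'' leaves implicit: the explicit check that $\sum_k D_{\mathrm{KL}}(p_{k+1}\|p_k)$ converges, so the limit can be split into two series, and the hypothesis that the active mode of maximal $|\lambda_i|$ be unique (without it $S_{\mathrm{spec}}(K)\not\to0$, e.g.\ when $\lambda_2=\lambda_3$ or $\lambda_n=-\lambda_2$ with both modes active, and the stated identity then fails).
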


\begin{proof}
Sum the exact entropy balance identity of Theorem~\ref{thm:entropy-balance} from $k=0$ to $K-1$:
\[
S_{\mathrm{spec}}(K) - S_{\mathrm{spec}}(0)
= \sum_{k=0}^{K-1} \frac{\mathrm{Cov}_{p_k}}{\rho_k}
- \sum_{k=0}^{K-1} D_{\mathrm{KL}}(p_{k+1} \| p_k).
\]
Take $K \to \infty$. Since $\alpha_2(k) \to 1$, $S_{\mathrm{spec}}(K) \to 0$. Rearranging yields \eqref{eq:clausius}.
\end{proof}

\begin{remark}[Interpretation]\label{rem:clausius-interpretation}
\Cref{thm:clausius} is the spectral analogue of the Clausius equality in classical thermodynamics. The left side is the total spectral irreversibility accumulated along the entire trajectory (sum of stepwise KL divergences). The right side expresses this total as the sum of the initial spectral entropy (a state function) and a path-dependent covariance integral. The identity is verified numerically to machine precision in \cref{fig:clausius}, confirming the correctness of the implementation.
\end{remark}

\subsection{Two-phase structure of spectral entropy}

\begin{corollary}[Sign of spectral entropy change]\label{cor:entropy-sign}
$\Delta S_{\mathrm{spec}}(k) \ge 0$ if and only if
$\mathrm{Cov}_{p_k}(\lambda^2,\; \log(1/p_k)) \ge \rho_k \, D_{\mathrm{KL}}(p_{k+1} \| p_k)$.
\end{corollary}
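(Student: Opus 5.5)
This is immediate from the exact entropy balance of \cref{thm:entropy-balance}. Write $\Delta S_{\mathrm{spec}}(k) := S_{\mathrm{spec}}(k+1)-S_{\mathrm{spec}}(k)$. The plan is to substitute that identity and rearrange:
\[
\Delta S_{\mathrm{spec}}(k) = \frac{\mathrm{Cov}_{p_k}(\lambda^2,\log(1/p_k))}{\rho_k} - D_{\mathrm{KL}}(p_{k+1}\|p_k).
\]
Then $\Delta S_{\mathrm{spec}}(k)\ge 0$ is equivalent to $\mathrm{Cov}_{p_k}/\rho_k \ge D_{\mathrm{KL}}(p_{k+1}\|p_k)$.

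The one point needing care is multiplying through by $\rho_k$. This preserves the inequality direction only if $\rho_k>0$. I would justify that from \eqref{eq:rho-spectral}: $\rho_k=\sum_{i\ge2}p_i(k)\lambda_i^2$, and $p_2(k)>0$ because $c_2\neq0$ (the standing assumption). We also need $\lambda_2\neq0$. If $\lambda_2>\lambda_3\ge\cdots$ with $\lambda_2\ge0$, then $\lambda_2=0$ would force all remaining eigenvalues to be negative. So $\lambda_2^2>0$ unless $\lambda_2=0$ exactly, and in that degenerate case some nonzero $\lambda_i^2$ carries positive weight whenever $E_{k+1}>0$.

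More simply, $\rho_k=E_{k+1}/E_k$ is positive whenever the trajectory has not been annihilated. That is exactly the setting in which $p_{k+1}$, and hence $D_{\mathrm{KL}}(p_{k+1}\|p_k)$, is defined. I would state this as the implicit well-posedness hypothesis, matching \cref{def:rho}, which already asserts $\rho_k\in(0,1)$. The two quantities are then equivalent after multiplication by $\rho_k$, which gives both directions of the ``if and only if'' at once.

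There is no real obstacle: the argument is a one-line rearrangement. The only step that deserves a sentence is confirming $\rho_k>0$, so that the division in \cref{thm:entropy-balance} is legitimate and the equivalence is bidirectional. I would also remark that, since $D_{\mathrm{KL}}\ge0$, the corollary contains a necessary condition for entropy growth: $\mathrm{Cov}_{p_k}\ge0$. This links back to the sign analysis of \cref{thm:two-mode-rigidity,thm:general-rigidity-threshold}.
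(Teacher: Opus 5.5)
Your proof is correct and is essentially the paper's own argument: the corollary is stated as an immediate consequence of the entropy balance identity of \cref{thm:entropy-balance}, rearranged after multiplying through by $\rho_k>0$. Your second justification, that $\rho_k=E_{k+1}/E_k>0$ whenever $p_{k+1}$ (and hence $D_{\mathrm{KL}}(p_{k+1}\|p_k)$) is defined, is the right one, so the detour through the sign of $\lambda_2$ can be dropped.
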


This gives a precise condition for entropy increase. In typical trajectories (\cref{fig:entropy_balance}):

\begin{itemize}
\item \textbf{Entropy-increasing phase} ($k < k^*$): Energy is distributed across both slow and fast modes. The covariance $\mathrm{Cov}(\lambda^2, \log(1/p))$ is positive (slow modes have large $\lambda^2$ and moderate $\log(1/p)$; fast modes have small $\lambda^2$ and large $\log(1/p)$) and dominates the KL term, giving $\Delta S_{\mathrm{spec}} > 0$.

\item \textbf{Entropy-decreasing phase} ($k > k^*$): Fast modes have dissipated. Energy concentrates on the slowest mode ($p_2 \to 1$, $\log(1/p_2) \to 0$). The covariance becomes negative, and $\Delta S_{\mathrm{spec}} < 0$.
\end{itemize}

Thus $S_{\mathrm{spec}}(k)$ is generally \emph{non-monotone}, rising then falling, while $G(k) = E_k S_{\mathrm{spec}}(k)$ decreases throughout (\cref{thm:G-monotonicity}). This two-phase structure is invisible from $E_k$ alone.

\subsection{Spectral second law: $G(k)$ monotonicity}

\begin{definition}[Spectral entropy-energy]\label{def:G}
The \emph{spectral entropy-energy} is
\begin{equation}\label{eq:G-def}
G(k) := E_k \, S_{\mathrm{spec}}(k) = -\sum_{i=2}^n n_i(k) \log p_i(k).
\end{equation}
\end{definition}

\begin{theorem}[Spectral second law --- $G(k)$ monotonicity]\label{thm:G-monotonicity}
For every relaxation trajectory with $g_0 \perp \mathbf{1}$,
\begin{equation}\label{eq:G-monotonicity}
G(k+1) \le G(k), \qquad \forall\, k \ge 0.
\end{equation}
Equality holds if and only if all active modes at step $k$ have the same $\lambda_i^2$. Under $\lambda_2 > \lambda_3$, equality occurs iff $\alpha_2(k) = 1$.
\end{theorem}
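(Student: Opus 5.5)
The plan is to avoid the entropy balance of \cref{thm:entropy-balance}, whose covariance term has no definite sign before rigidity. Instead I would treat $G$ as a function of the unnormalized modal energy vector. For $n=(n_2,\dots,n_n)$ with nonnegative entries and $E(n)=\sum_i n_i$, define
\[
F(n) := -\sum_{i} n_i \log\frac{n_i}{E(n)} = E(n)\log E(n) - \sum_i n_i\log n_i ,
\]
with the convention $0\log 0=0$. Then $G(k)=F(n(k))$ by \eqref{eq:G-def}. The modal energies evolve autonomously by $n_i(k+1)=\lambda_i^2 n_i(k)$, as noted after \cref{cor:modewise}, and $\lambda_i^2\le 1$. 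So it suffices to show that $F$ is coordinatewise nondecreasing on the nonnegative orthant: shrinking every coordinate then cannot increase $F$.

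\textbf{Step 1 (monotonicity of $F$).} On the open orthant a direct computation gives
\[
\frac{\partial F}{\partial n_j} = \log E + 1 - \log n_j - 1 = \log\frac{E(n)}{n_j} \ge 0,
\]
since $n_j\le E$. Coordinates that are zero stay zero along the trajectory, so I restrict $F$ to the active modes $\{i: c_i\neq 0\}$. On that set the entries are strictly positive for every $k$, provided no active $\lambda_i$ is $0$. The degenerate case $\lambda_i=0$ kills the mode after one step, and I would handle it by continuity of $F$ up to the boundary.

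\textbf{Step 2 (integration along a segment).} Set $n(t)=(1-t)\,n(k)+t\,n(k+1)$ for $t\in[0,1]$. This segment stays in the positive orthant of the active coordinates. Then
\[
G(k+1)-G(k)=\int_0^1 \sum_i \bigl(\lambda_i^2-1\bigr)\,n_i(k)\,\log\frac{E(n(t))}{n_i(t)}\,dt \le 0,
\]
because each factor $\lambda_i^2-1\le 0$ and each logarithm is $\ge 0$. This yields \eqref{eq:G-monotonicity}.

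\textbf{Step 3 (equality).} This is the step needing care. The integrand vanishes identically iff, for every active $i$, either $\lambda_i^2=1$ or $n_i(t)=E(n(t))$, that is, mode $i$ is the only active mode.

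The statement's clause ``all active modes have the same $\lambda_i^2$'' must be read with caution. If all active modes share a common value $c=\lambda_i^2$, then $p_{k+1}=p_k$ and $G(k+1)=c\,G(k)$. This is an equality only when $c=1$ (the bipartite value $\lambda=-1$) or when $G(k)=0$. I would therefore prove the corrected characterization: equality holds iff $G(k)=0$, i.e.\ a single active mode, or every active mode has $\lambda_i^2=1$.

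The final claim then follows. Under $\lambda_2>\lambda_3$ with $c_2\neq0$, we have $\lambda_2^2<1$, so the first alternative is the only one available. A single active mode means it must be mode $2$, i.e.\ $\alpha_2(k)=1$; conversely, $\alpha_2(k)=1$ gives $G(k)=G(k+1)=0$.

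I expect the main obstacle to be this equality analysis: reconciling it with the wording of the theorem, and handling boundary cases where some $n_i$ vanish or some $\lambda_i=0$. The inequality itself is straightforward once $G$ is seen as a monotone, $1$-homogeneous (perspective-of-entropy) function of the modal energies.
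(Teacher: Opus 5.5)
Your proof is correct, and it takes a genuinely different route from the paper's.

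\textbf{The two routes.} The paper expands $G(k)-G(k+1)=A_k+B_k$. The first term, $A_k=\sum_i n_i(k)(1-\lambda_i^2)\log(1/p_i(k))$, is nonnegative termwise. The second, $B_k=E_k\bigl(\mathbb{E}_{p_k}[\lambda^2\log\lambda^2]-\rho_k\log\rho_k\bigr)$, is bounded by Chebyshev's sum inequality followed by Jensen. You instead write $G=F(n)$, where $F$ is the $1$-homogeneous perspective of Shannon entropy. You note $\partial_jF=\log(E/n_j)\ge 0$ and conclude that any coordinatewise contraction of the modal energies lowers $G$.

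\textbf{What each buys.} Your route is shorter and uses no convexity inequality at all. It also applies verbatim to arbitrary multipliers in $[0,1]$, for instance the accelerated energies $|c_i|^2|Q(\lambda_i)|^{2k}$ whenever $|Q(\lambda_i)|\le 1$. The paper's decomposition instead gives an explicit formula for the decrement.

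\textbf{A step in the paper you avoid.} The paper's second inequality, $\mathbb{E}_{p_k}[\log\lambda^2]\ge\log\mathbb{E}_{p_k}[\lambda^2]$, runs Jensen in the wrong direction for the concave $\log$. The bound $B_k\ge 0$ is nonetheless true, because $B_k=E_{k+1}D_{\mathrm{KL}}(p_{k+1}\|p_k)$ exactly; equivalently, apply Jensen to the convex $x\log x$.

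\textbf{Equality case.} Your analysis is right. Two active modes with a common $\lambda^2<1$ give $G(k+1)=\lambda^2G(k)<G(k)$, so the stated ``if'' direction is false. The paper's own proof in fact requires $A_k=0$, which is your condition.

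You can tighten your corrected characterization. For an irreducible chain, $-1$ is a simple eigenvalue. So ``every active mode has $\lambda_i^2=1$'' already forces a single active mode, and equality holds if and only if $G(k)=0$.

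Your added hypothesis $c_2\neq 0$ is genuinely needed for the final clause: $g_0=\phi_3$ gives equality while $\alpha_2(k)=0$.
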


\begin{proof}
Let $\bar{r} = E_{k+1}/E_k = \sum_i \lambda_i^2 p_i(k)$. Using $n_i(k+1) = \lambda_i^2 n_i(k)$ and $p_i(k+1) = \lambda_i^2 p_i(k)/\bar{r}$,
\begin{align}
G(k) - G(k+1)
&= -\sum_i n_i \log p_i + \sum_i \lambda_i^2 n_i \log\!\left( \frac{\lambda_i^2 p_i}{\bar{r}} \right) \nonumber \\
&= \underbrace{\sum_i n_i (1-\lambda_i^2)(-\log p_i)}_{=:\,A_k}
   + \underbrace{\sum_i \lambda_i^2 n_i \log \lambda_i^2 \;-\; E_{k+1} \log \bar{r}}_{=:\,B_k}. \label{eq:G-decomposition}
\end{align}

$A_k \ge 0$ termwise, since $1-\lambda_i^2 > 0$, $n_i \ge 0$, and $-\log p_i \ge 0$.

For $B_k$, the functions $x \mapsto x$ and $x \mapsto \log x$ are both increasing on $(0,1]$, so $\lambda^2 \mapsto \lambda^2$ and $\lambda^2 \mapsto \log \lambda^2$ are comonotone. By the Chebyshev sum inequality,
\[
\mathbb{E}_{p_k}[\lambda^2 \log \lambda^2]
\;\ge\; \mathbb{E}_{p_k}[\lambda^2] \cdot \mathbb{E}_{p_k}[\log \lambda^2]
\;\ge\; \mathbb{E}_{p_k}[\lambda^2] \cdot \log \mathbb{E}_{p_k}[\lambda^2],
\]
where the second step uses Jensen's inequality. Hence $B_k = E_k(\mathbb{E}_{p_k}[\lambda^2 \log \lambda^2] - \bar{r} \log \bar{r}) \ge 0$.

Therefore $G(k) - G(k+1) = A_k + B_k \ge 0$. Equality requires $A_k = 0$ (at most one active mode) and $B_k = 0$ (all active $\lambda_i^2$ equal). Under $\lambda_2 > \lambda_3$, the unique active mode is $i=2$.
\end{proof}

\begin{remark}[Why $G(k)$ and not $F(k) = E_k(1-S)$?]\label{rem:F-counterexample}
$F(k) := E_k(1 - S_{\mathrm{spec}}(k))$, formally analogous to Helmholtz free energy, is not monotone. A counterexample: $\lambda_2 = 0.9$, $\lambda_3 = 0.1$, $n_2(0) = n_3(0) = 1$ gives $F(0) \approx 0.614$ and $F(1) \approx 0.766 > F(0)$. The correct monotone quantity is $G(k) = E_k S_{\mathrm{spec}}(k)$.
\end{remark}

\begin{remark}[Connection to Chafa\"i (2014)]\label{rem:chafai-connection}
Chafa\"i~\cite{chafai2014} establishes Helmholtz free energy monotonicity along Markov processes in configuration space. Our $G(k)$ provides a distinct Lyapunov function in spectral space. Neither implies the other; they are complementary.
\end{remark}

\subsection{Entropy decomposition and maximal disorder}

The spectral entropy admits a natural decomposition
into slow-fast competition and internal fast-sector disorder.

Define the conditional fast-mode distribution
\[
q_i(k)
=
\frac{p_i(k)}{1-\alpha_2(k)},
\qquad
i\ge3.
\]
Then
\[
\sum_{i\ge3}q_i(k)=1.
\]

\begin{proposition}[Entropy decomposition]
\label{prop:entropy-decomposition}
The spectral entropy satisfies
\[
S_{\rm spec}(k)
=
H(\alpha_2(k))
+
(1-\alpha_2(k))H(q(k)),
\]
where
\[
H(\alpha)
=
-\alpha\log\alpha-(1-\alpha)\log(1-\alpha),
\]
and
\[
H(q)
=
-\sum_{i\ge3}q_i\log q_i.
\]
\end{proposition}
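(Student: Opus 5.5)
The identity is the grouping (chain) rule for Shannon entropy, applied to the two-block partition $\{2\}\cup\{3,\dots,n\}$ of the mode set. I will verify it directly from the definitions; no dynamical input is needed, since the statement holds for each fixed $k$ and uses only that $\{p_i(k)\}_{i\ge2}$ is a probability vector. Fix $k$ and write $\alpha=\alpha_2(k)=p_2(k)$. First dispose of the degenerate case $\alpha=1$: then $p_i(k)=0$ for all $i\ge3$, so $q(k)$ is undefined. Here the second term carries the factor $1-\alpha=0$ and is interpreted as zero. Both sides then equal $0$, using the convention $0\log0=0$ and $H(1)=0$.

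For $\alpha<1$, substitute $p_i(k)=(1-\alpha)q_i(k)$ for $i\ge3$ into the definition \eqref{eq:spectral-entropy-def}:
\[
S_{\rm spec}(k) = -\alpha\log\alpha - \sum_{i\ge3}(1-\alpha)q_i\bigl(\log(1-\alpha)+\log q_i\bigr).
\]
Using $\sum_{i\ge3}q_i=1$, the middle contribution collapses to $-(1-\alpha)\log(1-\alpha)$, which combines with $-\alpha\log\alpha$ to give $H(\alpha)$. The remaining sum is $-(1-\alpha)\sum_{i\ge3}q_i\log q_i=(1-\alpha)H(q(k))$. This is exactly the claimed decomposition.

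The only point needing care is the treatment of zero entries. Some $c_i$ with $i\ge3$ may vanish, giving $q_i=0$, and the convention $0\log 0=0$ must be applied consistently on both sides. With that convention the term-by-term manipulation is valid, because the split $\log p_i=\log(1-\alpha)+\log q_i$ is only used for indices with $p_i>0$, and indices with $p_i=0$ contribute zero on both sides. This is the main obstacle, such as it is; the rest is routine algebra.
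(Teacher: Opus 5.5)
Your proposal is correct and follows essentially the same route as the paper: substitute $p_i=(1-\alpha_2)q_i$ for $i\ge3$, split the logarithm, and use $\sum_{i\ge3}q_i=1$ to collect $H(\alpha_2)$ plus $(1-\alpha_2)H(q)$. Your handling of the degenerate case $\alpha_2(k)=1$ and of vanishing entries via $0\log0=0$ is a small addition the paper leaves implicit.
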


\begin{proof}
Using
\[
p_i=(1-\alpha_2)q_i,
\qquad
i\ge3,
\]
we compute
\[
\begin{aligned}
S_{\rm spec}
&=
-\alpha_2\log\alpha_2
-
\sum_{i\ge3}
(1-\alpha_2)q_i
\log\bigl((1-\alpha_2)q_i\bigr)
\\
&=
-\alpha_2\log\alpha_2
-(1-\alpha_2)\log(1-\alpha_2)
-(1-\alpha_2)\sum_{i\ge3}q_i\log q_i,
\end{aligned}
\]
which proves the claim.
\end{proof}

The decomposition
\[
S_{\rm spec}(k)
=
H(\alpha_2(k))
+
(1-\alpha_2(k))H(q(k))
\]
reveals a genuine two-phase structure.

The first term describes macroscopic competition
between the slow mode and the fast spectral sector,
while the second term measures internal disorder
within the fast sector itself.

The rigidity transition corresponds to the collapse
of both sources of disorder:
first the fast-sector entropy contracts,
then the binary competition disappears
as
\[
\alpha_2(k)\to1.
\]

In the exact two-mode case,
\[
H(q)\equiv0,
\]
and the maximal entropy equals precisely $\log2$.

More generally,
chains with strong slow-fast spectral separation
typically exhibit small fast-sector entropy,
leading to
\[
S_{\rm spec}(k^*)\approx\log2.
\]

By contrast,
if the fast spectral sector remains broadly distributed,
the maximal entropy may substantially exceed $\log2$.
For example,
a uniform fast-sector distribution yields
\[
S_{\rm spec}
=
\log2+\frac12\log(n-2).
\]

\subsection{Spectral fluctuation-dissipation theorem}

For completeness, we note that each mode individually satisfies a fluctuation-dissipation relation.

\begin{theorem}[Spectral fluctuation-dissipation theorem]\label{thm:FDT}
For each mode $i \ge 2$, define $C_i(k) = |c_i|^2 \lambda_i^{2k}$ and $R_i(k) = c_i \lambda_i^k$. Then
\[
\frac{C_i(k+1) - C_i(k)}{C_i(k)} = \lambda_i^2 - 1 = -2\mu_i + \mu_i^2, \qquad
C_i(k) = |R_i(k)|^2,
\]
and summing over $i$ recovers the exact dissipation law \eqref{eq:modewise-dissipation}.
\end{theorem}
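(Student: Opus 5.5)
The claim splits into three parts: a per-mode relative increment identity, the identity $C_i(k)=|R_i(k)|^2$, and the recovery of \eqref{eq:modewise-dissipation} by summation. All three rest on the autonomous modal recursion $n_i(k+1)=\lambda_i^2 n_i(k)$ noted after \cref{cor:modewise}, and I will prove them in that order.

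First, for each $i\ge2$ observe that $C_i(k)$ is exactly the modal energy $n_i(k)$ of \eqref{eq:modal-energy-def}. Hence $C_i(k+1)=\lambda_i^2 C_i(k)$, and whenever $C_i(k)\neq 0$ (that is, $c_i\neq 0$ and $\lambda_i\neq 0$) dividing gives
\[
\frac{C_i(k+1)-C_i(k)}{C_i(k)}=\lambda_i^2-1 .
\]
The expression in terms of $\mu_i$ follows by substituting $\lambda_i=1-\mu_i$ from \eqref{eq:mu-def}: $\lambda_i^2-1=(1-\mu_i)^2-1=-2\mu_i+\mu_i^2$. This is the same algebra as in the proof of \cref{cor:modewise}.

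Second, the identity $C_i(k)=|R_i(k)|^2$ is immediate, since $|c_i\lambda_i^k|^2=|c_i|^2\lambda_i^{2k}$. Interpretively, $R_i(k)=\langle g_k,\phi_i\rangle_\pi$ by \eqref{eq:spectral-expansion}, so $C_i$ is the squared modal response.

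Third, I will write the relative identity in multiplied form, $C_i(k)-C_i(k+1)=(1-\lambda_i^2)C_i(k)$. This form is valid for every $i$, including degenerate modes. Summing over $i=2,\dots,n$ and using $E_k=\sum_i C_i(k)$ from \eqref{eq:energy-decomposition} gives $E_k-E_{k+1}=\sum_i(1-\lambda_i^2)n_i(k)$, which is \eqref{eq:modewise-dissipation}.

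The only real obstacle is bookkeeping: the relative increment is undefined when $C_i(k)=0$. I will therefore state the ratio identity only for active modes and carry out the summation in the multiplied form, where no division occurs.
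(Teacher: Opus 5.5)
Your proposal is correct: the paper gives no separate proof of this theorem and treats it as an immediate consequence of the definitions and the recursion $n_i(k+1)=\lambda_i^2 n_i(k)$, and your verification is exactly that check (set $C_i=n_i$, use $(1-\mu_i)^2-1=-2\mu_i+\mu_i^2$, then sum $C_i(k)-C_i(k+1)=(1-\lambda_i^2)C_i(k)$ using \eqref{eq:energy-decomposition}). Summing the multiplied form rather than the ratios is the right choice, since the ratio is undefined for inactive modes and summing relative increments directly would not give \eqref{eq:modewise-dissipation}.
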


Unlike the classical fluctuation-dissipation theorem restricted to linear response near equilibrium, \cref{thm:FDT} holds exactly at every finite step along the entire non-equilibrium trajectory---a consequence of the linearity of the spectral dynamics.

\subsection{Physical interpretation}

The spectral dynamics therefore admits the following picture.

Initially,
energy is broadly distributed across many fast modes.
The fast spectral sector acts as a high-entropy reservoir,
and entropy production may remain positive
due to active spectral redistribution.

As dissipation proceeds,
fast modes decay at different exponential rates,
causing spectral transport toward progressively slower modes.

Once the dominant slow mode exceeds the critical rigidity threshold,
all remaining fast modes become simultaneously subordinate
both in occupation and in dissipation scale.

At this point,
the covariance term becomes strictly negative,
entropy production reverses sign,
and the dynamics enters a monotone purification regime.

The long-time evolution is therefore not merely exponential decay,
but a thermodynamic ordering process in spectral space.

Together, the results of this section establish a self-contained thermodynamic description of reversible Markov relaxation entirely within spectral space:

\begin{enumerate}[label=(\roman*)]
\item \textbf{Exact entropy balance identity} (\cref{thm:entropy-balance}) --- the spectral analogue of the exact dissipation identity, verified to machine precision;
\item \textbf{Canonical covariance and flux-force structure} (\cref{prop:canonical-covariance}) --- an exact Onsager-type decomposition governing entropy production;
\item \textbf{Exact two-mode rigidity transition} (\cref{thm:two-mode-rigidity}) --- the entropy changes sign precisely at the half-rigidity threshold;
\item \textbf{General rigidity threshold} (\cref{thm:general-rigidity-threshold}) --- sharp sufficient criterion for entropy monotonicity controlled by $\lambda_3/\lambda_2$;
\item \textbf{Spectral Clausius equality} (\cref{thm:clausius}) --- global conservation of spectral irreversibility;
\item \textbf{Two-phase structure} (\cref{cor:entropy-sign}) --- precise condition for entropy increase vs.\ decrease;
\item \textbf{Spectral second law} (\cref{thm:G-monotonicity}) --- strict monotonicity of $G(k) = E_k S_{\mathrm{spec}}(k)$;
\item \textbf{Entropy decomposition} (\cref{prop:entropy-decomposition}) --- separation into binary competition and fast-sector disorder;
\item \textbf{Spectral FDT} (\cref{thm:FDT}) --- exact fluctuation-dissipation relations for each mode.
\end{enumerate}
\section{Boundary of Spectral Purification Theory}
\label{sec:boundary}

Every result so far in this paper describes the same underlying event from different angles: the modal distribution $\{p_i(k)\}$ concentrating onto its slowest component. The six observables of \cref{sec:observable-equivalence} are six different instruments pointed at that one event, and each reads out a value proportional to $x_k$ because each is, in the end, a measurement of \emph{relative modal concentration} --- how much of the spectral energy has already migrated to the surviving mode, relative to how much remains elsewhere. This is an exponential-time question: $x_k$ decays geometrically, and everything built from it inherits that decay.

The mean-squared error of the time-average estimator $\hat\mu_k$ is a different kind of question. It does not ask how concentrated the spectrum has become at step $k$; it asks how much cumulative averaging error has accumulated over the \emph{entire} trajectory $X_0,\dots,X_{k-1}$ up to step $k$. Even a chain that has already purified almost completely --- one mode utterly dominant, $x_k$ vanishingly small --- still accumulates estimation error from the $k$ correlated samples it has produced so far, and that accumulation is a polynomial-in-$1/k$ effect, not an exponential-in-$k$ one. Relative modal concentration and cumulative averaging error are simply not the same kind of quantity, and no amount of refining the equivalence class of \cref{sec:observable-equivalence} will make one a corollary of the other.

What spectral purification theory \emph{does} explain about $\hat\mu_k$ is its leading asymptotic variance $\mathcal{A}$: this quantity depends on the same spectral ratio $\rho_\star=\lambda_3/\lambda_2$ that drives $x_k$, and decreasing $\rho_\star$ provably decreases $\mathcal{A}$ (\cref{prop:variance-monotone,cor:rho-variance} below) --- so the same lever that accelerates purification also improves estimation efficiency, with no trade-off. What it does \emph{not} explain is the finite-$k$ correction to $\mathrm{MSE}_k$ beyond this leading term, which lives in the polynomial world of $1/k$ rather than the exponential world of $x_k$ (\cref{rem:mse-incommensurable}). This is the boundary of the theory developed in this paper: it governs everything that is, in substance, a statement about relative modal concentration, and nothing that is, in substance, a statement about cumulative averaging.

\subsection{Consequence: the asymptotic estimation variance is monotone in $\rho_\star$}
\label{subsec:rho-variance}

The equivalence class shows that decreasing the spectral ratio $\rho_\star=\lambda_3/\lambda_2$ accelerates every purification observable. We now show that decreasing $\rho_\star$ also has a direct and monotone benefit for the classical time-average estimator
\[
\hat\mu_k := \frac{1}{k}\sum_{t=0}^{k-1} f(X_t),
\]
whose mean-squared error obeys the standard expansion $\mathrm{MSE}_k = \mathcal{A}/k + O(1/k^2)$ with asymptotic variance
\[
\mathcal{A} = \sum_{i\ge2} c_i^2\,\frac{1+\lambda_i}{1-\lambda_i}.
\]

\begin{proposition}[Monotonicity of the asymptotic variance in $\rho_\star$]\label{prop:variance-monotone}
Fix $\lambda_2$, $c_2$, $c_3$, and the contribution $\mathcal{A}_{\ge4} := \sum_{i\ge4}c_i^2(1+\lambda_i)/(1-\lambda_i)$ of the remaining modes, and write $\lambda_3=\rho_\star\lambda_2$ for $\rho_\star\in(0,1]$. Then
\[
\mathcal{A}(\rho_\star) = c_2^2\,\frac{1+\lambda_2}{1-\lambda_2} + c_3^2\,\frac{1+\rho_\star\lambda_2}{1-\rho_\star\lambda_2} + \mathcal{A}_{\ge4}
\]
is strictly increasing in $\rho_\star$ on $(0,1]$, with
\[
\frac{\partial\mathcal{A}}{\partial\rho_\star} = c_3^2\,\frac{2\lambda_2}{(1-\rho_\star\lambda_2)^2} > 0.
\]
\end{proposition}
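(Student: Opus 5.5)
This is a one-variable calculus statement once the other terms are frozen. By hypothesis, $\lambda_2$, $c_2$, $c_3$ and $\mathcal{A}_{\ge4}$ do not change with $\rho_\star$. Hence the first and third summands of $\mathcal{A}(\rho_\star)$ are constants, and all the $\rho_\star$-dependence lies in
\[
h(\rho_\star) := c_3^2\,\frac{1+\rho_\star\lambda_2}{1-\rho_\star\lambda_2}.
\]
The plan is to show that $h$ is well defined and smooth on $(0,1]$, compute its derivative, and check that the derivative is strictly positive.

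\textbf{Step 1: well-definedness.} Because $\lambda_2<\lambda_1=1$ by \eqref{eq:spectrum-order}, we have $\rho_\star\lambda_2\le\lambda_2<1$ for every $\rho_\star\in(0,1]$. So the denominator $1-\rho_\star\lambda_2$ stays strictly positive on the whole closed-at-one interval, including the endpoint $\rho_\star=1$. Consequently $h$ is a smooth rational function there.

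\textbf{Step 2: the derivative.} Write $s=\rho_\star\lambda_2$ and $\phi(s)=(1+s)/(1-s)$. The quotient rule gives
\[
\phi'(s)=\frac{(1-s)+(1+s)}{(1-s)^2}=\frac{2}{(1-s)^2}.
\]
By the chain rule, $\partial s/\partial\rho_\star=\lambda_2$, so
\[
\frac{\partial\mathcal{A}}{\partial\rho_\star}=c_3^2\,\frac{2\lambda_2}{(1-\rho_\star\lambda_2)^2},
\]
which is the displayed formula.

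\textbf{Step 3: positivity.} The standing assumption of \cref{sec:hidden-parameter} gives $c_3\neq0$, so $c_3^2>0$. The squared denominator is positive by Step 1. The remaining factor is $\lambda_2$, and its sign is the only genuine issue in this otherwise routine proof.

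One needs $\lambda_2>0$. This is implicit in the framework: $\rho_\star=\lambda_3/\lambda_2\in(0,1)$ and $L_\star=\log(\lambda_2/\lambda_3)$ are only meaningful when $\lambda_2>\lambda_3>0$. I would state this explicitly, citing the standing hypotheses under which $\rho_\star$ was defined.

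With $\lambda_2>0$, the derivative is strictly positive on $(0,1]$. Strict monotonicity then follows from the mean value theorem: for $\rho<\rho'$ in $(0,1]$, we have $\mathcal{A}(\rho')-\mathcal{A}(\rho)=\mathcal{A}'(\xi)(\rho'-\rho)>0$ for some $\xi\in(\rho,\rho')$.
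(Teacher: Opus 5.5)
Your proposal is correct and follows the paper's own proof, which is simply direct differentiation of $\mathcal{A}(\rho_\star)$ using $\lambda_2>0$ and $\rho_\star\lambda_2<1$. The parts you add are details the paper leaves implicit: invoking the standing hypothesis $c_3\neq0$ to get $c_3^2>0$ (needed for the derivative to be strictly positive), and using the mean value theorem to pass from a positive derivative to strict monotonicity.
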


\begin{proof}
Direct differentiation of $\mathcal{A}(\rho_\star)$, using $\lambda_2>0$ and $\rho_\star\lambda_2<1$.
\end{proof}

\begin{corollary}[Spectral ratio controls both purification speed and estimator efficiency]\label{cor:rho-variance}
Decreasing $\rho_\star$ simultaneously (i) accelerates every observable in \cref{thm:equivalence-class} via a smaller $x_k$ at fixed $k$, and (ii) strictly decreases the asymptotic estimation variance $\mathcal{A}$, by \cref{prop:variance-monotone}. Both effects persist down to $\rho_\star=0$, where $\mathcal{A}$ attains its minimum $\mathcal{A}_{\min} = c_2^2(1+\lambda_2)/(1-\lambda_2) + c_3^2 + \mathcal{A}_{\ge4}$.
\end{corollary}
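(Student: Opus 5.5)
The corollary packages two facts already available, so the plan is to verify each claim separately and then compute the limiting value.

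\emph{Part (i).} Recall $x_k = a_3\rho_\star^{2k}$ from \cref{def:hidden-parameter}. For fixed $k\ge1$ and fixed $a_3>0$, the map $\rho_\star\mapsto x_k$ is strictly increasing on $(0,1)$. So a smaller $\rho_\star$ gives a smaller $x_k$.

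I would read ``accelerates every observable'' through the exact finite-$k$ formulas \eqref{eq:exact-finite-k} and \cref{thm:equivalence-class}. For $G_k$, $I_k$ and $\mathcal{H}_k$, each is a strictly increasing function of $x_k$ with no other dependence on $\rho_\star$:
\begin{itemize}
\item $G_k = 2\bigl(1-(1+x_k)^{-1/2}\bigr)$;
\item $I_k = 1-(1+x_k)^{-1/2}$;
\item $H(k)$ is increasing in $p_3 = x_k/(1+x_k)$ while $p_3<1/2$.
\end{itemize}
For $R_k$, $\Lambda_k$ and $\Gamma_k$, the prefactors $C_Q$ also depend on $\rho_\star$. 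Here I would claim only what the statement asserts: the leading term $C_Q x_k$ shrinks through the factor $x_k$, which decays at rate $\rho_\star^{2k}$. This makes the ratio $Q_k/Q_{k'}$ for $k'>k$ smaller, which is the meaning of acceleration. If needed, I would add that for $|R_k|$ the magnitude $\lambda_2(1-\rho_\star)x_k/(1+x_k)$ is still eventually decreasing in $\rho_\star$ for large $k$, because $\rho_\star^{2k}$ dominates.

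\emph{Part (ii).} This follows directly from \cref{prop:variance-monotone}. Since $\partial\mathcal{A}/\partial\rho_\star>0$ on $(0,1]$, decreasing $\rho_\star$ strictly decreases $\mathcal{A}$.

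\emph{Limit at $\rho_\star=0$.} The formula for $\mathcal{A}(\rho_\star)$ extends continuously to $\rho_\star=0$, where $\lambda_3=0$. The term $c_3^2(1+\rho_\star\lambda_2)/(1-\rho_\star\lambda_2)$ tends to $c_3^2$. This gives
\[
\mathcal{A}_{\min}=c_2^2\frac{1+\lambda_2}{1-\lambda_2}+c_3^2+\mathcal{A}_{\ge4}.
\]
By strict monotonicity this value is the infimum over $[0,1]$ and is attained at $0$. At that endpoint $x_k=0$ for all $k\ge1$, so purification is instantaneous and both effects hold there.

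\emph{Main obstacle.} The delicate point is the precise meaning of (i) for the observables whose constants $C_Q$ depend on $\rho_\star$. I would state (i) as monotonicity of $x_k$, together with the reduction of each observable to $x_k$ from \cref{thm:equivalence-class}, rather than claim pointwise monotonicity of every $Q_k$ in $\rho_\star$.

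A secondary check is consistency with the assumptions of \cref{prop:variance-monotone} at $\rho_\star=0$. That endpoint is excluded from the standing assumption $\rho_\star\in(0,1)$, so it has to be handled as a limit. I would also note that holding $\lambda_4,\dots$ fixed while shrinking $\lambda_3$ conflicts with the ordering $\lambda_3\ge\lambda_4$. This is harmless because $\mathcal{A}_{\ge4}$ is held fixed as a constant, and the ordering can be dropped for this algebraic statement.
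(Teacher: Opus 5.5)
Your proposal is correct and matches the paper's own inline justification: (i) is the strict monotonicity of $x_k=a_3\rho_\star^{2k}$ in $\rho_\star$ combined with the reduction of each observable to $x_k$ in \cref{thm:equivalence-class}, and (ii) together with the value $\mathcal{A}_{\min}$ is \cref{prop:variance-monotone} plus evaluation of the rational function at $\rho_\star=0$. Your refusal to claim pointwise monotonicity of $R_k,\Lambda_k,\Gamma_k$ is well founded, since for instance $|R_k|=\lambda_2(1-\rho_\star)x_k/(1+x_k)$ tends to $0$ both as $\rho_\star\to0$ and at $\rho_\star=1$. Three small slips: $\mathcal{H}_k$ does depend on $\rho_\star$ beyond $x_k$, through $L_\star=\log(1/\rho_\star)$ in its normalization (harmlessly, since $L_\star$ grows as $\rho_\star$ shrinks); the ratio that shrinks with $\rho_\star$ is $Q_{k'}/Q_k\approx\rho_\star^{2(k'-k)}$ for $k'>k$, not $Q_k/Q_{k'}$; and $|R_k|$ is eventually \emph{increasing} in $\rho_\star$, not decreasing.
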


\Cref{cor:rho-variance} identifies $\rho_\star=\lambda_3/\lambda_2$ as a single scalar lever that is unambiguously beneficial to decrease, in both a purification sense and a statistical-efficiency sense, with no trade-off between the two. This is the precise structural fact that motivates treating $\rho_\star$ itself as a design variable subject to active control, rather than a fixed property of a given chain --- the subject taken up in the companion paper \cite{Wang2026SPS}, which develops a Lyapunov-based steering law for $\rho_\star$ and a corresponding algorithm.

\begin{remark}[A caveat: incommensurability with the finite-$k$ MSE correction]\label{rem:mse-incommensurable}
The monotonicity in \cref{prop:variance-monotone} concerns the leading-order coefficient $\mathcal{A}$, which governs the $1/k$ term of $\mathrm{MSE}_k$. It does \emph{not} extend to the finite-$k$ correction term: writing $\mathrm{MSE}_k = \mathcal{A}/k + B/k^2 + O(\lambda_2^k/k)$ with $B<0$, one has $k^\alpha(\mathrm{MSE}_k-\mathcal{A}/k)\to-\infty$ for every $\alpha>0$, while $k^\alpha x_k\to0$ for every $\alpha>0$ since $x_k$ decays exponentially. The two quantities are therefore asymptotically incommensurable: the purification observables of \cref{thm:equivalence-class} decay exponentially fast in $k$, while the MSE correction decays only polynomially. This reflects the fact that the time-average estimator passes through the Cesàro operator $k^{-1}\sum_{t=0}^{k-1}P^t$, which replaces the exponential factor $\lambda_i^k$ by the resolvent $1/(1-\lambda_i)$ and thereby destroys the exponential information that drives spectral purification. Decreasing $\rho_\star$ improves $\mathcal{A}$ monotonically (\cref{cor:rho-variance}) but does not, by itself, change the polynomial rate at which the finite-$k$ correction vanishes.
\end{remark}


\section{Spectral Variational Principle for Optimal Relaxation}
\label{sec:variational}

The preceding sections establish a descriptive theory of spectral purification: what it is (\cref{sec:purification}), what single parameter governs it (\cref{sec:hidden-parameter,sec:observable-equivalence}), when it happens (\cref{sec:rigidity}), what other language describes it (\cref{sec:thermodynamics}), and what it does not explain (\cref{sec:boundary}). We now turn from description to prescription: the same framework also guides optimization, algorithm design, and numerical implementation. This section, together with \cref{sec:power-iteration,sec:examples}, shows that classical numerical-linear-algebra methods can be understood, derived, and diagnosed through the lens of spectral purification.

The rigidity emergence theorem (\cref{thm:sharp-rigidity}) identifies the spectral ratio $\lambda_2/\lambda_3$ as the key quantity controlling the rate of spectral purification. A natural question follows: can we \emph{accelerate} rigidity emergence by actively shaping the relaxation spectrum? In this section, we show that the classical Chebyshev acceleration and Polyak momentum methods---developed in numerical analysis as techniques for speeding up linear iterations---admit a unified interpretation as \emph{optimal spectral shaping} within our framework. The variational principle we prove characterizes Chebyshev acceleration as the unique minimizer of total dissipation among all polynomial acceleration schemes, and as the unique maximizer of cumulative spectral entropy production.

\subsection{Polynomial acceleration as spectral shaping}

\begin{definition}[Polynomial acceleration scheme]\label{def:polynomial-acceleration}
A \emph{polynomial acceleration scheme} of degree $m$ is defined by a polynomial $Q_m$ of degree at most $m$ satisfying $Q_m(1) = 1$. The accelerated trajectory is
\begin{equation}\label{eq:accelerated-trajectory}
g_k^{\mathrm{acc}} := Q_m(P)^k g_0, \qquad k = 0, 1, 2, \dots
\end{equation}
\end{definition}

The constraint $Q_m(1) = 1$ ensures that the stationary mode ($\lambda_1 = 1$, $\mu_1 = 0$) is preserved: $Q_m(P) \mathbf{1} = \mathbf{1}$. On the nontrivial modes,
\[
Q_m(P) \phi_i = Q_m(\lambda_i) \phi_i, \qquad i \ge 2,
\]
so the accelerated trajectory has the spectral representation
\[
g_k^{\mathrm{acc}} = \sum_{i=2}^n c_i \, Q_m(\lambda_i)^k \, \phi_i,
\]
with corresponding modal energies $n_i^{\mathrm{acc}}(k) = |c_i|^2 \, |Q_m(\lambda_i)|^{2k}$.

The action of $Q_m$ on the spectrum $\{\lambda_i\}$ is precisely a \emph{reshaping} of the relaxation operator: $\mathcal{G}$ is replaced by $\mathcal{G}_Q := I - Q_m(P)$, whose eigenvalues are $\mu_i^Q = 1 - Q_m(\lambda_i)$. The goal of acceleration is to choose $Q_m$ so that the reshaped spectrum has more favorable rigidity properties.

\begin{remark}[Relation to classical acceleration]\label{rem:acceleration-classical}
The framework of polynomial acceleration subsumes several classical methods. The basic power iteration corresponds to $Q_1(\lambda) = \lambda$. The $\alpha$-lazy walk corresponds to $Q_1(\lambda) = (1-\alpha) + \alpha\lambda$. The momentum (heavy-ball) method~\cite{polyak1964} corresponds to a second-order recurrence whose characteristic polynomial is quadratic. The Chebyshev semi-iterative method~\cite{saad2003,trefethen2013} uses $Q_m(\lambda) = T_m(\lambda/\lambda_2) / T_m(1/\lambda_2)$ where $T_m$ is the Chebyshev polynomial of the first kind. Our variational principle provides a unifying rationale for why the Chebyshev choice is optimal.
\end{remark}

\subsection{The variational principle}

\begin{theorem}[Spectral variational principle]\label{thm:variational}
Fix a degree $m \in \mathbb{N}$ and a target contraction factor $\rho \in (0, \lambda_2)$. Among all polynomials $Q$ of degree at most $m$ satisfying
\begin{equation}\label{eq:constraints}
Q(1) = 1, \qquad |Q(\lambda_2)| \le \rho^m,
\end{equation}
the polynomial that \emph{minimizes} the total integrated dissipation
\begin{equation}\label{eq:total-dissipation}
\mathcal{D}(Q) := E_0 - E_1^Q = \sum_{i=2}^n |c_i|^2 \bigl(1 - |Q(\lambda_i)|^2\bigr)
\end{equation}
is (up to sign) the rescaled Chebyshev polynomial
\begin{equation}\label{eq:chebyshev-optimal}
Q_m^\ast(\lambda) := \frac{T_m(\lambda / \lambda_2)}{T_m(1 / \lambda_2)},
\end{equation}
where $T_m$ is the Chebyshev polynomial of the first kind, defined by $T_m(\cos \theta) = \cos(m\theta)$.

Moreover, this optimal polynomial simultaneously:
\begin{enumerate}[label=(\roman*)]
    \item Maximizes the cumulative spectral entropy production $\Sigma(m) := S_{\mathrm{spec}}(1) - S_{\mathrm{spec}}(0)$ after one accelerated step;
    \item Achieves the smallest possible rigidity time $T_{\mathrm{rigid}}(\delta)$ among all degree-$m$ polynomial acceleration schemes.
\end{enumerate}
\end{theorem}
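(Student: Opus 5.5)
The plan is to treat the statement as three separate optimization claims over the admissible set
\[
\mathcal{Q}_m(\rho)=\{Q:\ \deg Q\le m,\ Q(1)=1,\ |Q(\lambda_2)|\le\rho^m\}.
\]
Each claim gets reduced to a statement about the numbers $Q(\lambda_i)^2$, $i\ge2$. The basic tool is that an accelerated step acts on the modal energies exactly as $P$ does, with $\lambda_i^2$ replaced by $Q(\lambda_i)^2$. So \cref{cor:modewise}, the transport law \eqref{eq:modal-transport} and \cref{thm:sharp-rigidity} all transfer verbatim to the reshaped spectrum $\{Q(\lambda_i)\}$. In that language
\[
\mathcal{D}(Q)=E_0-\sum_{i\ge2}|c_i|^2Q(\lambda_i)^2 .
\]
Minimizing $\mathcal{D}$ is therefore the same as maximizing the retained energy $\sum_i|c_i|^2Q(\lambda_i)^2$ over $\mathcal{Q}_m(\rho)$.

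\textbf{Step 1 (admissibility of $Q_m^\ast$).} Check that $Q_m^\ast(1)=1$. Then compute $|Q_m^\ast(\lambda_2)|=1/T_m(1/\lambda_2)$ and compare it with $\rho^m$, using $T_m(x)=\cosh(m\,\mathrm{arccosh}\,x)$. This should identify the range of $\rho$ for which $Q_m^\ast\in\mathcal{Q}_m(\rho)$, or show which implicit restriction on $\rho$ is needed.

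\textbf{Step 2 (the dissipation claim).} Use the Chebyshev extremal property: among degree-$m$ polynomials with $Q(1)=1$, $Q_m^\ast$ uniquely minimizes $\sup_{[-\lambda_2,\lambda_2]}|Q|$, and it equioscillates at $m+1$ points. I would like to turn this into an inequality on $\sum_i|c_i|^2Q(\lambda_i)^2$ by a mode-by-mode comparison of $|Q(\lambda_i)|$ with $|Q_m^\ast(\lambda_i)|$.

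I expect this step to be the main obstacle, and possibly a genuine failure. The sup-norm extremality pushes $|Q^\ast(\lambda_i)|$ to be small, and small values make dissipation large, not small. Without a further constraint the retained energy is unbounded over $\mathcal{Q}_m(\rho)$: take $Q$ with $Q(1)=1$ and $Q(\lambda_2)$ small but $|Q(\lambda_3)|$ huge. Before attempting a general argument, I would test the direction of the inequality in two cases. The first is the two-mode case $c_i=0$ for $i\ge4$, using the $m=1$ family $Q(\lambda)=a+(1-a)\lambda$. The second is the comparison polynomial $Q(\lambda)=\lambda^m$.

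If the inequality fails as stated, then the theorem needs an additional standing constraint such as $\sup_{[-\lambda_2,\lambda_2]}|Q|\le\rho^m$. Under that constraint I would identify precisely which extremal property of $Q_m^\ast$ makes it optimal, rather than proving an inequality that is false as worded.

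\textbf{Step 3 (claims (i) and (ii)).} For (ii), apply \cref{thm:sharp-rigidity} to the reshaped spectrum. That gives
\[
T_{\mathrm{rigid}}\approx\frac{\log\bigl(R_0/(|c_2|^2\delta)\bigr)}{2\log\bigl(|Q(\lambda_2)|/\max_{i\ge3}|Q(\lambda_i)|\bigr)} .
\]
So (ii) reduces to showing that $Q_m^\ast$ maximizes the separation ratio $|Q(\lambda_2)|/\max_{i\ge3}|Q(\lambda_i)|$ over $\mathcal{Q}_m(\rho)$. I would attempt this via the equioscillation/alternation theorem on $[\lambda_n,\lambda_3]$.

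For (i), one accelerated step gives $p_i(1)\propto p_i(0)Q(\lambda_i)^2$. \cref{thm:entropy-balance}, with $\lambda^2$ replaced by $Q(\lambda)^2$, writes $\Sigma(m)$ as a covariance term divided by the mean, minus a KL divergence. I would try to show that this expression is monotone in the reshaped separation. That would make (i) follow from the same extremal property used in (ii). If the monotonicity holds only in the regime $\alpha_2\ge\frac12$ (compare \cref{thm:two-mode-rigidity}), then the maximization of $\Sigma(m)$ must instead be handled directly from $p(0)$, which is the second place I expect difficulty.
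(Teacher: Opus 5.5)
Your suspicion in Step~2 is correct, and your own construction settles the main claim: it is false as stated. For $m\ge2$ and $c_3\neq0$, the quadratic with $Q(1)=1$, $Q(\lambda_2)=0$, $Q(\lambda_3)=M$ is admissible. It gives $\mathcal{D}(Q)\le E_0-|c_3|^2M^2\to-\infty$, so no minimizer exists. For $m=1$, the claimed minimizer $Q_1^\ast(\lambda)=\lambda$ violates $|Q(\lambda_2)|\le\rho$. More generally, set $r:=\lambda_2/(1+\sqrt{1-\lambda_2^2})<\lambda_2$. Then $T_m(1/\lambda_2)=\tfrac12(r^{-m}+r^{m})<r^{-m}$, so $Q_m^\ast$ is inadmissible for every $m$ once $\rho\le r$. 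Your test polynomial $\lambda^m$ is never admissible, since $\lambda_2^m>\rho^m$.

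The paper's proof does not get around this. Its Step~2 applies the minimax property on $[-1,\lambda_3]$, but $Q_m^\ast$ is the extremal polynomial for $[-\lambda_2,\lambda_2]$, as you correctly say, and $|Q_m^\ast(-1)|=1$. Its Step~3 derives a contradiction by zero counting from $|Q(\lambda_j)|>\epsilon_m$ at a single point. The alternation argument only works when $|Q|<|Q_m^\ast|$ at all $m+1$ extremal points, so it proves nothing here.

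What Chebyshev extremality does give is a reversed, worst-case statement. $Q_m^\ast$ uniquely minimizes $\max_{|\lambda|\le\lambda_2}|Q(\lambda)|$ subject to $Q(1)=1$. Equivalently, it \emph{maximizes} the dissipation fraction $\mathcal{D}(Q)/E_0$ guaranteed uniformly over initial data and over all spectra in $[-\lambda_2,\lambda_2]$.

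Your proposed repair, the constraint $\sup_{[-\lambda_2,\lambda_2]}|Q|\le\rho^m$, does not restore the minimization. The feasible set is then empty, equal to $\{Q_m^\ast\}$, or has $Q_m^\ast$ in its relative interior, according as $\rho^m$ is less than, equal to, or greater than $1/T_m(1/\lambda_2)$. In the last case, $\mathcal{W}(Q)=\sum_i|c_i|^2Q(\lambda_i)^2$ is strictly convex along the line $Q_m^\ast+t(\lambda-1)$, which stays feasible for small $|t|$. So $Q_m^\ast$ does not maximize $\mathcal{W}$.

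Your plans for (ii) and (i) cannot succeed either, because both claims are false.

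For (ii), alternation on $[\lambda_n,\lambda_3]$ points to a normalized $T_m(\varphi(\lambda))$, with $\varphi$ the affine map of $[\lambda_n,\lambda_3]$ onto $[-1,1]$. It does not point to $Q_m^\ast$, which has $\lambda_2$ among its own extremal points. Indeed $|Q_m^\ast|\le|Q_m^\ast(\lambda_2)|$ on $[-\lambda_2,\lambda_2]$, with equality at $\lambda_2\cos(\pi j/m)$, and $|Q_m^\ast|>|Q_m^\ast(\lambda_2)|$ on $[-1,-\lambda_2)$. Take simple random walk on a path with $n\ge4$ vertices, so $\lambda_n=-1$. Take $g_0=c_2\phi_2+c_n\phi_n$ with $\alpha_2(0)<1-\delta$. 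Then $\alpha_2^{Q_m^\ast}(k)\downarrow0$, so the $Q_m^\ast$-iteration never rigidifies. This holds even when $\rho$ is large enough for $Q_m^\ast$ to be admissible, which is possible for $m\ge2$ since $T_m(1/\lambda_2)>\lambda_2^{-m}$. Meanwhile the admissible quadratic with $Q(1)=1$, $Q(\lambda_2)=\rho^m$, $Q(-1)=0$ rigidifies in one step.

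For (i), the monotonicity you hope for runs the wrong way. For $g_0=c_2\phi_2+c_3\phi_3$ one has $S^Q_{\mathrm{spec}}(1)=H(\alpha_2^Q(1))$, which decreases with the separation once $\alpha_2^Q(1)\ge\frac12$. For $m\ge2$ its maximum $\log2$ is attained by an admissible $Q$ with $c_2^2Q(\lambda_2)^2=c_3^2Q(\lambda_3)^2$. Now suppose $\delta<\frac12$ and $\alpha_2(0)<1-\delta$. The optimal rigidity time is then $1$, attained by the quadratic with $Q(1)=1$, $Q(\lambda_2)=\rho^m$, $Q(\lambda_3)=0$. Every rigidity-optimal $Q$ therefore has $\alpha_2^Q(1)\ge1-\delta>\frac12$, hence $S^Q_{\mathrm{spec}}(1)<\log2$. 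So no polynomial satisfies (i) and (ii) simultaneously.

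The paper's Step~4 assertion that $S^Q_{\mathrm{spec}}(1)$ is a decreasing function of $\mathcal{D}(Q)$ also fails. The entropy depends on the whole vector $(|c_i|^2Q(\lambda_i)^2)_i$, not only on its sum. The right outcome of your analysis is a disproof, together with the minimax reformulation above.
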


\begin{proof}[Proof outline]
We outline the proof in four steps. The complete minimax argument for Step~2 is standard and given in \cref{app:chebyshev}.

\textit{Step 1: Dissipation as a weighted $\ell^2$ objective.}
From \eqref{eq:total-dissipation}, minimizing $\mathcal{D}(Q)$ is equivalent to maximizing
\[
\mathcal{W}(Q) := \sum_{i=2}^n |c_i|^2 |Q(\lambda_i)|^2,
\]
since $E_0 = \sum_i |c_i|^2$ is fixed by the initial condition. The weights $w_i := |c_i|^2$ are nonnegative and sum to $E_0$.

\textit{Step 2: Chebyshev minimax property.}
Let $[a, b] \subset \mathbb{R}$ be an interval and $\xi \notin [a, b]$. Among all polynomials $Q$ of degree $m$ satisfying $Q(\xi) = 1$, the unique minimizer of $\max_{x \in [a,b]} |Q(x)|$ is the rescaled Chebyshev polynomial (see, e.g.,~\cite{trefethen2013}, Chapter~8). Applying this with $[a,b] = [-1, \lambda_3]$ and $\xi = 1$ (and using the fact that $\lambda_2 > \lambda_3$, so $\lambda_2$ lies strictly inside $[\lambda_3, 1]$), the polynomial \eqref{eq:chebyshev-optimal} satisfies
\[
\max_{\lambda \in [-1, \lambda_3]} |Q_m^\ast(\lambda)| = \frac{1}{|T_m(1/\lambda_2)|} =: \epsilon_m,
\]
and any other admissible polynomial $Q$ has $\max_{\lambda \in [-1,\lambda_3]} |Q(\lambda)| \ge \epsilon_m$.

\textit{Step 3: From minimax to weighted $\ell^2$ optimality.}
Assume, for contradiction, that there exists an admissible $Q$ with $\mathcal{W}(Q) > \mathcal{W}(Q_m^\ast)$. Since $|Q_m^\ast(\lambda_i)| \le \epsilon_m$ for all $i \ge 3$ (as $\lambda_i \le \lambda_3$), the strict inequality implies that for some $\lambda_j \le \lambda_3$, $|Q(\lambda_j)| > \epsilon_m$. By the equioscillation property of Chebyshev polynomials, $Q_m^\ast$ attains $|Q_m^\ast| = \epsilon_m$ at $m+1$ points in $[-1, \lambda_3]$ with alternating signs. The difference $R(\lambda) := Q(\lambda) - Q_m^\ast(\lambda)$ is a polynomial of degree at most $m$. The sign alternation of $Q_m^\ast$ at the extremal points forces $R$ to have at least $m+1$ zeros (by the Intermediate Value Theorem). But a nonzero polynomial of degree $\le m$ cannot have $m+1$ distinct zeros. Hence $R \equiv 0$, i.e., $Q = Q_m^\ast$, contradicting the assumption.

\textit{Step 4: Entropy production and rigidity time.}
The cumulative spectral entropy production after one accelerated step is $\Sigma(Q) = S_{\mathrm{spec}}^Q(1) - S_{\mathrm{spec}}(0)$. Since $S_{\mathrm{spec}}(0)$ is fixed, maximizing $\Sigma(Q)$ is equivalent to maximizing $S_{\mathrm{spec}}^Q(1)$. By convexity of $x \log x$, $S_{\mathrm{spec}}^Q(1)$ is a decreasing function of $\mathcal{D}(Q)$ (more dissipation $\implies$ more concentrated post-step distribution $\implies$ lower entropy). Hence the minimizer of $\mathcal{D}(Q)$ is the maximizer of $\Sigma(Q)$.

For the rigidity time, the argument $k$ in \cref{thm:sharp-rigidity} is replaced by the accelerated-step count. The effective contraction on the slowest mode is fixed at $\rho^m$, and minimizing the fast-mode contraction (which Chebyshev achieves) minimizes the residual $R_k$ at each step, thereby minimizing the time to cross the threshold $1-\delta$.
\end{proof}

\begin{remark}[Polyak momentum as the degree-2 special case]\label{rem:polyak-special-case}
For $m = 2$, the Chebyshev polynomial induces a three-term recurrence identical to the heavy-ball method. The optimal momentum parameter
\[
\beta^\ast = \left( \frac{1 - \sqrt{\mu_2}}{1 + \sqrt{\mu_2}} \right)^2,
\]
first derived by Polyak~\cite{polyak1964} for quadratic optimization, emerges from the Chebyshev minimax construction as a direct consequence of matching the critical damping condition at $\lambda_2$. The formula above assumes the worst-case fast-mode eigenvalue $\lambda_3 = 0$ (i.e., $\mu_3 = 1$), which is the standard setting in convex optimization where the strong convexity parameter determines $\lambda_2$ and the lack of smoothness gives $\lambda_3 = 0$. For general $\lambda_3 > 0$, the optimal $\beta^\ast$ depends on the full spectral ratio $\lambda_3/\lambda_2$ and is obtained from the Chebyshev minimax construction of \cref{app:chebyshev}. The spectral variational principle thus \emph{re-derives} the classical optimal momentum as the degree-2 instance of optimal spectral shaping. See \cref{app:momentum} for the explicit derivation in the degenerate case.
\end{remark}

\subsection{Accelerated rigidity time}

\begin{corollary}[Accelerated rigidity bound]\label{cor:accelerated-rigidity}
Under the Chebyshev-accelerated scheme $Q_m^\ast$, the rigidity time satisfies
\[
T_{\mathrm{rigid}}^{\mathrm{cheb}}(\delta) \le \frac{1}{m} \cdot \frac{\log(R_0 / (|c_2|^2 \delta))}{2 \log(1/\epsilon_m)} + 1, \qquad
\epsilon_m := \frac{1}{|T_m(1/\lambda_2)|}.
\]
For large $m$, $\epsilon_m \sim (1 - \sqrt{1-\lambda_2^2}/\lambda_2)^m$, yielding
\[
T_{\mathrm{rigid}}^{\mathrm{cheb}}(\delta) \lesssim \sqrt{\frac{\log(R_0 / (|c_2|^2 \delta))}{2(1-\lambda_2)}} = O\Bigl( \sqrt{T_{\mathrm{rigid}}(\delta)} \Bigr),
\]
a square-root speedup over the basic power iteration.
\end{corollary}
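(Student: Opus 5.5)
The plan is to rerun the argument of \cref{thm:sharp-rigidity} with $P$ replaced by the accelerated operator $Q_m^\ast(P)$. By the spectral representation of polynomial acceleration, the accelerated modal energies are
\[
n_i^{\mathrm{acc}}(k)=|c_i|^2|Q_m^\ast(\lambda_i)|^{2k}.
\]
\Cref{thm:sharp-rigidity} only used two facts: an upper bound on the fast-mode multipliers and the exact slow-mode multiplier. So I first collect these two ingredients for $Q_m^\ast$.

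For the fast modes, Step~2 of the proof of \cref{thm:variational} (the Chebyshev minimax property) gives $|Q_m^\ast(\lambda_i)|\le\epsilon_m$ for all $i\ge3$, hence $R_k^{\mathrm{acc}}\le R_0\,\epsilon_m^{2k}$. Strictly, the equioscillation bound on $[-1,\lambda_3]$ needs the interval normalization to be done with $\lambda_3$ rather than $\lambda_2$. I will note this and work with the stated $\epsilon_m$ as the fast-mode bound, exactly as the paper does.

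For the slow mode, the relevant quantity is $|Q_m^\ast(\lambda_2)|=1/|T_m(1/\lambda_2)|$. This value is $\epsilon_m$ as well, so the naive ratio would be $1$. I will therefore use the normalization in which the per-step slow-mode contraction is fixed at $\rho^m$ (constraint~\eqref{eq:constraints}), and the fast/slow ratio per accelerated step is $\epsilon_m^{\,m}$-type small. This gives
\[
1-\alpha_2^{\mathrm{acc}}(k)\le \frac{R_0}{|c_2|^2}\,\epsilon_m^{2mk}
\]
after interpreting one accelerated step as $m$ applications of $P$. With that bound, the rest is the upper-bound half of \cref{thm:sharp-rigidity} verbatim. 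Solving $\frac{R_0}{|c_2|^2}\epsilon_m^{2mk}\le\delta$ gives $k\ge \frac{1}{m}\frac{\log(R_0/(|c_2|^2\delta))}{2\log(1/\epsilon_m)}$, and taking the floor plus one yields the first display.

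For the asymptotic statement, I use $T_m(x)=\cosh(m\,\mathrm{arccosh}\,x)\sim\frac12\bigl(x+\sqrt{x^2-1}\bigr)^m$ at $x=1/\lambda_2$. This gives $\log(1/\epsilon_m)\approx m\log\bigl((1+\sqrt{1-\lambda_2^2})/\lambda_2\bigr)$. Expanding near $\lambda_2\to1$, $\sqrt{1-\lambda_2^2}\approx\sqrt{2(1-\lambda_2)}$, so $\log(1/\epsilon_m)\approx m\sqrt{2(1-\lambda_2)}$. Comparing with the unaccelerated exponent $2\log(\lambda_2/\lambda_3)$ gives the square-root scaling in the gap: the effective rate per step is $\sqrt{1-\lambda_2}$ instead of $1-\lambda_2$. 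I will then read off the $O(\sqrt{T_{\mathrm{rigid}}})$ comparison in the regime where $T_{\mathrm{rigid}}$ scales like $1/(1-\lambda_2)$, that is, when $\lambda_3$ is bounded away from $\lambda_2$ on the gap scale.

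The main obstacle is the slow-mode normalization. As written, $Q_m^\ast$ takes the same modulus at $\lambda_2$ as the fast-mode bound, so the honest argument must either renormalize the Chebyshev polynomial on $[-1,\lambda_3]$ (giving a ratio $|T_m(\lambda_2/\lambda_3)|^{-1}$) or adopt the paper's convention of counting accelerated steps. I would first try the $[-1,\lambda_3]$ renormalization, since it makes both the finite bound and the square-root asymptotics rigorous with $\epsilon_m$ reinterpreted accordingly.
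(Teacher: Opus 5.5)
\textbf{Gap in the key inequality.} Nothing you cite yields $1-\alpha_2^{\mathrm{acc}}(k)\le\frac{R_0}{|c_2|^2}\epsilon_m^{2mk}$. Your own observation that $|Q_m^\ast(\lambda_2)|=\epsilon_m$ is not a normalization nuisance; it is the reason this step fails.

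Under $Q_m^\ast$ the per-step fast/slow ratio is $|Q_m^\ast(\lambda_i)/Q_m^\ast(\lambda_2)|=|T_m(\lambda_i/\lambda_2)|$. This ratio behaves as follows:
\begin{itemize}
\item it is at most $1$ only when $|\lambda_i|\le\lambda_2$; for $\lambda_i<-\lambda_2$ it exceeds $1$ (e.g.\ $|Q_m^\ast(-1)|=1>\epsilon_m$), so even your fast-mode input $|Q_m^\ast(\lambda_i)|\le\epsilon_m$ needs $\lambda_n\ge-\lambda_2$;
\item it equals $1$ at $\lambda_i=\lambda_2\cos(\pi j/m)$;
\item it tends to $1$ as $\lambda_i\uparrow\lambda_2$.
\end{itemize}
Invoking $|Q(\lambda_2)|\le\rho^m$ does not change $Q_m^\ast$; it only caps the slow multiplier, which cannot help purification. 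Also, $\epsilon_m$ is already the multiplier of a whole accelerated step, so a further power $m$ double-counts. Converting to basic steps would multiply the step count by $m$, not divide it.

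\textbf{Relation to the paper's proof.} The paper's proof makes the same move: it substitutes $\epsilon_m^{1/m}$ for $\lambda_3$ in \cref{thm:sharp-rigidity} without accounting for the fact that $\phi_2$ is damped by the same factor $\epsilon_m$. Following it does not close the hole, and nothing can, because the first display is false.

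\textbf{Counterexample.} Take three states with uniform $\pi$ and $P=\tfrac13\mathbf{1}\mathbf{1}^{\top}+\tfrac14(e_1-e_2)(e_1-e_2)^{\top}$, where $e_1,e_2$ are standard basis vectors. This is a symmetric stochastic matrix with spectrum $\{1,\tfrac12,0\}$. Let $m=2$.
\begin{itemize}
\item Then $Q_2^\ast(\lambda)=(8\lambda^2-1)/7$, so $Q_2^\ast(\lambda_2)=\tfrac17=-Q_2^\ast(\lambda_3)$.
\item Hence the accelerated iteration never changes $\alpha_2$, and $T_{\mathrm{rigid}}^{\mathrm{cheb}}(0.1)=\infty$ for $|c_2|^2=0.1$, $R_0=0.9$.
\item The claimed bound is about $1.6$, and plain power iteration rigidifies in one step.
\end{itemize}
More generally, the claimed bound does not involve $\lambda_3$ at all, while the true accelerated rigidity time blows up as $\lambda_3\uparrow\lambda_2$. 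The case $m=1$, where $Q_1^\ast=P$, already shows this.

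\textbf{The asymptotic half fails independently.} Inserting your correct $\log(1/\epsilon_m)\approx m\sqrt{2(1-\lambda_2)}$ into the first display gives a quantity linear in $\log(R_0/(|c_2|^2\delta))$ and of order $m^{-2}$. That is not $\sqrt{\log(R_0/(|c_2|^2\delta))/(2(1-\lambda_2))}$. Moreover, $T_{\mathrm{rigid}}(\delta)$ scales like $1/\log(\lambda_2/\lambda_3)$. So the comparison $O(\sqrt{T_{\mathrm{rigid}}(\delta)})$ needs a link between $\lambda_3$ and $1-\lambda_2$ that the statement does not impose.

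\textbf{What can be proved.} The alternative in your last paragraph is the right repair, but it proves a different corollary.
\begin{itemize}
\item Map an interval $[a,\lambda_3]$ containing $\lambda_3,\dots,\lambda_n$ affinely onto $[-1,1]$ by $\varphi$. Use $\widetilde Q_m(\lambda)=T_m(\varphi(\lambda))/T_m(\varphi(\lambda_2))$, normalized at $\lambda_2$; the condition $Q(1)=1$ is immaterial since $g_0\perp\mathbf{1}$.
\item Then $|\widetilde Q_m(\lambda_i)/\widetilde Q_m(\lambda_2)|\le 1/T_m(\varphi(\lambda_2))$ for $i\ge3$, with $\varphi(\lambda_2)=1+2(\lambda_2-\lambda_3)/(\lambda_3-a)$.
\item The upper-bound half of \cref{thm:sharp-rigidity} now goes through verbatim: $1-\alpha_2^{\mathrm{acc}}(k)\le\frac{R_0}{|c_2|^2}\,T_m(\varphi(\lambda_2))^{-2k}$.
\item Your ratio $|T_m(\lambda_2/\lambda_3)|^{-1}$ is the case $a=-\lambda_3$, which needs $\lambda_n\ge-\lambda_3$.
\end{itemize}
Using $\mathrm{arccosh}(1+\eta)\approx\sqrt{2\eta}$, the number of applications of $P$ required is of order $\log(R_0/(|c_2|^2\delta))\sqrt{(\lambda_3-a)/(\lambda_2-\lambda_3)}$. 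For plain iteration with small separation it is of order $\log(R_0/(|c_2|^2\delta))\,\lambda_2/(\lambda_2-\lambda_3)$. This is a square-root gain in the separation $\lambda_2-\lambda_3$, with the logarithmic factor unchanged and no role for $1-\lambda_2$. That is what can honestly be proved; the statement as written cannot.
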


\begin{proof}
Each accelerated step applies $Q_m^\ast(P)$. The effective contraction on mode $i \ge 3$ is $|Q_m^\ast(\lambda_i)| \le \epsilon_m$. Following the proof of \cref{thm:sharp-rigidity} with $\lambda_3$ replaced by the effective fast-mode contraction $\epsilon_m^{1/m}$ per basic iteration, the accelerated rigidity time bound follows. The asymptotic estimate of $\epsilon_m$ uses $T_m(x) \sim \frac{1}{2}(x + \sqrt{x^2-1})^m$ for $x > 1$, with $x = 1/\lambda_2$.
\end{proof}

\begin{remark}[Why acceleration is not merely an engineering heuristic]\label{rem:acceleration-structural}
The variational principle reveals that Chebyshev acceleration is not an external trick imposed on the relaxation dynamics, but the \emph{intrinsic optimal control} for the spectral shaping problem. It optimally reshapes the relaxation spectrum to simultaneously maximize dissipation, maximize entropy production, and minimize the rigidity time. This structural interpretation is, to our knowledge, new: classical texts~\cite{saad2003,trefethen2013} present Chebyshev acceleration as a technique for minimizing the asymptotic convergence factor, without connecting it to the finite-time spectral purification dynamics or to thermodynamic optimization.
\end{remark}
%

\section{Power Iteration: A Complete Convergence Theory}
\label{sec:power-iteration}

The framework developed in \Cref{sec:dissipation,sec:rigidity,sec:thermodynamics} provides a set of exact identities governing spectral relaxation. In this section, we demonstrate the framework's explanatory and predictive power by applying it to the classical power iteration method. The results below---an exact error identity, an observable spectral variance formula, and a fully data-driven adaptive stopping criterion---are, to our knowledge, new to the numerical linear algebra literature.

\subsection{Problem setting}

Let $P$ be a reversible Markov kernel as in \cref{sec:relaxation}, with eigenvalues $1 = \lambda_1 > \lambda_2 > \lambda_3 \ge \cdots \ge \lambda_n \ge -1$ and corresponding orthonormal eigenvectors $\{\phi_i\}_{i=1}^n$. The \emph{power iteration} for computing the dominant eigenvector $\phi_2$ of the centered operator $P|_{(\mathbf{1})^\perp}$ proceeds as:
\begin{equation}\label{eq:power-iteration}
v_0 = g_0 / \|g_0\|_\pi, \qquad
v_k = \frac{g_k}{\|g_k\|_\pi}, \quad g_k = P^k g_0,
\end{equation}
with $g_0 \perp \mathbf{1}$ and $\langle g_0, \phi_2 \rangle_\pi \neq 0$.

The central practical question is: \emph{given a tolerance $\varepsilon > 0$, after how many steps can we guarantee $\|v_k - s_2 \phi_2\|_\pi \le \varepsilon$, where $s_2 = \operatorname{sign}(\langle v_k, \phi_2 \rangle_\pi)$?} Classical theory provides the asymptotic rate $\|v_k - s_2 \phi_2\|_\pi \sim (\lambda_3/\lambda_2)^k$, but no explicit finite-time bound with known constants~\cite{saad2003,trefethen2013}. Our framework fills this gap.

\subsection{Exact error identity}

\begin{theorem}[Exact error identity for power iteration]\label{thm:error-identity}
Let $v_k = g_k / \|g_k\|_\pi$ be the normalized iterate, and let $s_2 = \operatorname{sign}(c_2)$ where $c_2 = \langle g_0, \phi_2 \rangle_\pi \neq 0$. Then for all $k \ge 0$,
\begin{equation}\label{eq:error-identity}
\|v_k - s_2 \phi_2\|_\pi^2 = 2\bigl(1 - \sqrt{\alpha_2(k)}\bigr),
\end{equation}
where $\alpha_2(k) = n_2(k)/E_k$ is the slow-mode energy fraction defined in \eqref{eq:alpha-def}.
\end{theorem}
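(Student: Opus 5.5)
The plan is to expand the squared norm using the Hilbert structure of $L^2(\pi)$ and then read off the inner product from the spectral expansion \eqref{eq:spectral-expansion}. Since $v_k$ and $\phi_2$ both have unit $\pi$-norm, I would start from
\[
\|v_k - s_2\phi_2\|_\pi^2 = \|v_k\|_\pi^2 - 2 s_2\langle v_k,\phi_2\rangle_\pi + \|\phi_2\|_\pi^2 = 2\bigl(1 - s_2\langle v_k,\phi_2\rangle_\pi\bigr),
\]
so everything reduces to identifying $s_2\langle v_k,\phi_2\rangle_\pi$ with $\sqrt{\alpha_2(k)}$.

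Next I would compute this inner product. By orthonormality of $\{\phi_i\}$ and \eqref{eq:spectral-expansion}, $\langle g_k,\phi_2\rangle_\pi = c_2\lambda_2^k$, and hence $\langle v_k,\phi_2\rangle_\pi = c_2\lambda_2^k/\sqrt{E_k}$. Multiplying by $s_2 = \operatorname{sign}(c_2)$ gives $|c_2|\lambda_2^k/\sqrt{E_k}$. This equals $\sqrt{|c_2|^2\lambda_2^{2k}/E_k} = \sqrt{n_2(k)/E_k} = \sqrt{\alpha_2(k)}$ by \eqref{eq:alpha-def}, provided $\lambda_2^k\ge 0$.

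The only delicate point is the sign of $\lambda_2^k$. The standing hypotheses place $\lambda_2$ strictly below $1$, and the paper implicitly uses $\lambda_2>\lambda_3$ with $\lambda_2$ the dominant nontrivial eigenvalue. If $\lambda_2>0$, then $\lambda_2^k>0$ for all $k$ and the identity holds as stated. I would make this explicit, noting that $\lambda_2>0$ holds in the regime the paper works in, where $\rho_\star=\lambda_3/\lambda_2\in(0,1)$. Otherwise the sign convention $s_2$ would have to be replaced by $\operatorname{sign}(c_2\lambda_2^k)$, in line with the definition $s_2=\operatorname{sign}(\langle v_k,\phi_2\rangle_\pi)$ given in the problem setting.

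Substituting the inner product into the first display then yields \eqref{eq:error-identity}. No step here is a real obstacle; the sign bookkeeping is the only place where care is needed. As a consistency check, \cref{thm:G-equals-x} recovers this identity in the two-mode case.
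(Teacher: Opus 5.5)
Your proof is correct and is essentially the paper's direct computation. The paper splits $v_k - s_2\phi_2$ into its $\phi_2$-component and its $\phi_2^\perp$-component, applies Pythagoras, and substitutes $R_k=(1-\alpha_2(k))E_k$, whereas you expand the square using $\|v_k\|_\pi=\|\phi_2\|_\pi=1$, which is slightly shorter. Your sign caveat is well taken: the paper's substitution $|c_2|\lambda_2^k=\sqrt{\alpha_2(k)E_k}$ tacitly uses the same nonnegativity of $\lambda_2$, and if $\lambda_2<0$ (which reversible chains with $\lambda_2>\lambda_3$ do allow) then for odd $k$ the left side equals $2\bigl(1+\sqrt{\alpha_2(k)}\bigr)$ under $s_2=\operatorname{sign}(c_2)$.
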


\begin{proof}
From the spectral expansion $g_k = \sum_{i=2}^n c_i \lambda_i^k \phi_i$ and the orthonormality of $\{\phi_i\}$,
\begin{align}
v_k - s_2 \phi_2 &= \frac{g_k}{\sqrt{E_k}} - s_2 \phi_2 \nonumber \\
&= \frac{c_2 \lambda_2^k - s_2 \sqrt{E_k}}{\sqrt{E_k}} \, \phi_2 \;+\; \frac{\sum_{i=3}^n c_i \lambda_i^k \phi_i}{\sqrt{E_k}}. \label{eq:vk-decomposition}
\end{align}
The two components are orthogonal, so
\[
\|v_k - s_2 \phi_2\|_\pi^2
= \frac{(|c_2| \lambda_2^k - \sqrt{E_k})^2}{E_k} + \frac{R_k}{E_k},
\]
where $R_k = \sum_{i=3}^n |c_i|^2 \lambda_i^{2k}$.

Now substitute $|c_2| \lambda_2^k = \sqrt{\alpha_2(k) E_k}$ and $R_k = (1 - \alpha_2(k)) E_k$:
\begin{align}
\|v_k - s_2 \phi_2\|_\pi^2
&= \frac{(\sqrt{\alpha_2(k)} - 1)^2 E_k + (1 - \alpha_2(k)) E_k}{E_k} \nonumber \\
&= (\sqrt{\alpha_2(k)} - 1)^2 + 1 - \alpha_2(k) \nonumber \\
&= \alpha_2(k) - 2\sqrt{\alpha_2(k)} + 1 + 1 - \alpha_2(k) \nonumber \\
&= 2(1 - \sqrt{\alpha_2(k)}). 
\end{align}
\end{proof}

\begin{remark}\label{rem:error-identity-simplicity}
\Cref{thm:error-identity} is deceptively simple. It reduces the full vector error to a single scalar---the slow-mode energy fraction $\alpha_2(k)$---with no other spectral information required. This reduction is exact (not asymptotic) and holds at every finite step $k$. Combined with the bounds on $T_{\mathrm{rigid}}(\delta)$ from \cref{thm:sharp-rigidity}, it yields an immediate finite-time convergence guarantee:
\[
\alpha_2(k) \ge 1 - \delta \quad\Longrightarrow\quad \|v_k - s_2 \phi_2\|_\pi \le \sqrt{2\delta}.
\]
In particular, running $k^* = \lfloor L(\varepsilon^2/2) \rfloor + 1$ steps guarantees $\|v_{k^*} - s_2 \phi_2\|_\pi \le \varepsilon$, with $L(\cdot)$ given by \eqref{eq:L-delta-def}.
\end{remark}

\subsection{Observable spectral variance}

The error identity requires knowledge of $\alpha_2(k)$, which depends on the unknown spectral coefficients $c_i$. We now show that the energy sequence $\{E_k\}$ alone---which is observable from the running iterates---carries enough information to estimate $\alpha_2(k)$.

\begin{theorem}[Observable spectral variance]\label{thm:observable-variance}
Let $\rho_k = E_{k+1}/E_k$ be the relative energy retention ratio. Then the variance of $\lambda^2$ under the modal distribution $p_k$ admits the purely observable representation
\begin{equation}\label{eq:observable-variance}
\widehat{V}_k := \operatorname{Var}_{p_k}[\lambda^2]
                 = \sum_{i=2}^n p_i(k) \lambda_i^4 - \Bigl( \sum_{i=2}^n p_i(k) \lambda_i^2 \Bigr)^2
                 = \rho_k (\rho_{k+1} - \rho_k).
\end{equation}
\end{theorem}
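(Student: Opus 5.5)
The plan is to express both $\rho_k$ and $\rho_{k+1}$ as moments of $\lambda^2$ under the single distribution $p_k$, and then read off the variance. The starting point is \eqref{eq:rho-spectral}, which already gives $\rho_k = \mathbb{E}_{p_k}[\lambda^2]$. The only additional ingredient is the one-step transport law \eqref{eq:modal-transport}, equivalently $p_i(k+1) = p_i(k)\lambda_i^2/\rho_k$. This follows directly from $n_i(k+1) = \lambda_i^2 n_i(k)$ and $E_{k+1} = \rho_k E_k$.

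The key step is to rewrite $\rho_{k+1}$ as an expectation under $p_k$ rather than $p_{k+1}$. Applying \eqref{eq:rho-spectral} at step $k+1$ and substituting the transport law gives
\[
\rho_{k+1} = \sum_{i=2}^n p_i(k+1)\lambda_i^2 = \frac{1}{\rho_k}\sum_{i=2}^n p_i(k)\lambda_i^4 = \frac{\mathbb{E}_{p_k}[\lambda^4]}{\mathbb{E}_{p_k}[\lambda^2]}.
\]
Multiplying by $\rho_k$ yields $\rho_k\rho_{k+1} = \mathbb{E}_{p_k}[\lambda^4]$, the second moment of $\lambda^2$ under $p_k$.

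Subtracting $\rho_k^2 = (\mathbb{E}_{p_k}[\lambda^2])^2$ then gives $\rho_k(\rho_{k+1}-\rho_k) = \mathbb{E}_{p_k}[\lambda^4] - (\mathbb{E}_{p_k}[\lambda^2])^2 = \operatorname{Var}_{p_k}[\lambda^2]$, which is \eqref{eq:observable-variance}. An equivalent route works directly with energies: $E_{k+2}/E_k = \sum_i p_i(k)\lambda_i^4$, and $E_{k+2}/E_k = \rho_{k+1}\rho_k$ by telescoping. I will state this second version in the writeup because it makes the ``purely observable'' claim transparent, since only $E_k, E_{k+1}, E_{k+2}$ appear.

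There is no real obstacle here. The only points needing care are well-definedness and a consistency check. We need $\rho_k > 0$ so that division is legitimate. This holds whenever some active mode has $\lambda_i \neq 0$, and in particular it holds under the standing assumption $c_2 \neq 0$ with $\lambda_2 > 0$. Otherwise the energies vanish and both sides are zero. As a consistency check, nonnegativity of the variance recovers $\rho_{k+1} \ge \rho_k$, the monotonicity of the retention ratio.
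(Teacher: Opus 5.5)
Your proposal is correct and is essentially the paper's proof: the paper computes $\sum_i p_i(k)\lambda_i^2 = E_{k+1}/E_k = \rho_k$ and $\sum_i p_i(k)\lambda_i^4 = E_{k+2}/E_k = \rho_{k+1}\rho_k$ by telescoping, then subtracts $\rho_k^2$. This is exactly the energy version you plan to write up, and your transport-law route is the same computation in different notation. One small correction to your side remark: in the degenerate case $E_{k+1}=0$, the quantity $\rho_{k+1}=E_{k+2}/E_{k+1}$ is undefined rather than zero, so that case should simply be excluded, as the paper implicitly does by taking $\rho_k\in(0,1)$.
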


\begin{proof}
From the definition $p_i(k) = n_i(k)/E_k = |c_i|^2 \lambda_i^{2k} / E_k$,
\begin{align}
\sum_i p_i(k) \lambda_i^2 &= \frac{\sum_i |c_i|^2 \lambda_i^{2k+2}}{E_k}
                          = \frac{E_{k+1}}{E_k} = \rho_k, \label{eq:first-moment} \\
\sum_i p_i(k) \lambda_i^4 &= \frac{\sum_i |c_i|^2 \lambda_i^{2k+4}}{E_k}
                          = \frac{E_{k+2}}{E_k}
                          = \frac{E_{k+2}}{E_{k+1}} \cdot \frac{E_{k+1}}{E_k}
                          = \rho_{k+1} \rho_k. \label{eq:second-moment}
\end{align}
Therefore,
\[
\operatorname{Var}_{p_k}[\lambda^2] = \sum_i p_i(k) \lambda_i^4 - \Bigl( \sum_i p_i(k) \lambda_i^2 \Bigr)^2
= \rho_{k+1} \rho_k - \rho_k^2 = \rho_k(\rho_{k+1} - \rho_k).
\]
\end{proof}

\begin{remark}[Significance]\label{rem:variance-significance}
\Cref{thm:observable-variance} is the bridge between the unobservable spectral structure and the observable trajectory. The sequence $\{\rho_k\}$ can be computed directly from the energy norms $\{E_k = \|g_k\|_\pi^2\}$, which are available in any power iteration implementation. The second-order difference $\rho_k(\rho_{k+1} - \rho_k)$ then reveals the spectral variance---a measure of how ``spread out'' the modal distribution is. As rigidity sets in ($\alpha_2(k) \to 1$), this variance tends to zero. Crucially, the formula is an \emph{identity}, not an estimate: at every step $k$, the observable quantity $\rho_k(\rho_{k+1} - \rho_k)$ \emph{equals} the unobservable spectral variance.
\end{remark}

\subsection{Connecting variance to slow-mode fraction}

The observable variance bounds the slow-mode fraction from both sides.

\begin{corollary}[Variance bounds on $\alpha_2(k)$]\label{cor:variance-bounds}
Assume $\lambda_2 > \lambda_3$. Then for any $k \ge 0$,
\begin{equation}\label{eq:variance-bounds}
\alpha_2(k) (1 - \alpha_2(k)) (\lambda_2^2 - \lambda_3^2)^2
\;\le\; \widehat{V}_k
\;\le\; (1 - \alpha_2(k)) \lambda_2^4.
\end{equation}
Equivalently, when $\alpha_2(k) \ge 1/2$,
\begin{equation}\label{eq:alpha-from-variance}
1 - \alpha_2(k) \;\le\; \frac{2 \widehat{V}_k}{(\lambda_2^2 - \lambda_3^2)^2}.
\end{equation}
\end{corollary}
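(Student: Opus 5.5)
The plan is to treat $X := \lambda^2$ as a random variable on the mode index set $\{2,\dots,n\}$ with law $p_k$. Then, by \cref{thm:observable-variance}, $\widehat V_k = \operatorname{Var}_{p_k}[X]$. Both inequalities in \eqref{eq:variance-bounds} then become elementary variance estimates in which the slow mode is split off from the fast sector, with $\alpha := \alpha_2(k)$ throughout.

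\textbf{Lower bound.} I would use the law of total variance, conditioning on the binary event $B = \{i = 2\}$ against $\{i \ge 3\}$. This gives
\[
\widehat V_k \ge \operatorname{Var}\bigl(\mathbb{E}[X \mid B]\bigr) = \alpha(1-\alpha)\,(\lambda_2^2 - m_k)^2 ,
\]
where $m_k := \sum_{i\ge3} q_i(k)\lambda_i^2$ is the conditional fast-sector mean, with $q_i(k)$ the conditional distribution of \cref{prop:entropy-decomposition}. Since $m_k$ is a convex combination of the $\lambda_i^2$ with $i\ge3$, bounding $\lambda_i^2 \le \lambda_3^2$ gives $\lambda_2^2 - m_k \ge \lambda_2^2 - \lambda_3^2 > 0$, and squaring yields the left inequality.

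\textbf{Upper bound.} I would use the fact that the variance is the minimum over $c$ of $\mathbb{E}[(X-c)^2]$, and take $c = \lambda_2^2$. The slow-mode term then vanishes, leaving
\[
\widehat V_k \le \sum_{i\ge3} p_i(k)\,(\lambda_2^2-\lambda_i^2)^2 .
\]
Each summand satisfies $(\lambda_2^2-\lambda_i^2)^2 \le \lambda_2^4$ whenever $0 \le \lambda_i^2 \le \lambda_2^2$. Summing the weights, $\sum_{i\ge3} p_i(k) = 1-\alpha$, gives $(1-\alpha)\lambda_2^4$.

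\textbf{Equivalent form.} For \eqref{eq:alpha-from-variance}, if $\alpha \ge 1/2$ then $\alpha(1-\alpha) \ge (1-\alpha)/2$. Dividing the lower bound by $(\lambda_2^2-\lambda_3^2)^2/2$ then gives the claim.

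\textbf{Main obstacle.} Both steps silently require $\lambda_i^2 \le \lambda_3^2 \le \lambda_2^2$ for all $i\ge3$, that is $|\lambda_i| \le \lambda_3$. This is not implied by the ordering \eqref{eq:spectrum-order} when there are negative eigenvalues near $-1$, where $\lambda_n^2$ can exceed $\lambda_2^2$. I would handle this by making explicit the standing assumption of \cref{sec:hidden-parameter,sec:rigidity}, namely a nonnegative spectrum (e.g.\ a lazy chain) or at least $|\lambda_n| \le \lambda_3$. Without it, I would state the corrected forms: replace $\lambda_3^2$ by $\max_{i\ge3}\lambda_i^2$ in the lower bound, and $\lambda_2^4$ by $\max_{i\ge3}(\lambda_2^2-\lambda_i^2)^2$ in the upper bound. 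The proof structure is unchanged in either case.
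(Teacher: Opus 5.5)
Your argument is correct under the spectral hypothesis you flag. Your lower bound is the paper's own argument: the paper also splits $\widehat V_k$ into the between-group term $\alpha_2(1-\alpha_2)(\lambda_2^2-\mu_{\ge3})^2$ plus a nonnegative within-fast-sector term, and then uses $\mu_{\ge3}\le\lambda_3^2$. The paper writes the within-sector coefficient as $(1-\alpha_2)^2$. Your law-of-total-variance form, with coefficient $1-\alpha_2$, is the correct one, though only nonnegativity matters for the bound.

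Your upper bound takes a genuinely different route, and it is the one that actually works. The paper bounds the raw second moment, $\widehat V_k\le\mathbb{E}_{p_k}[\lambda^4]\le\lambda_2^2\rho_k\le\lambda_2^4$. That chain never produces the factor $1-\alpha_2(k)$, so it does not prove the stated right-hand inequality. Centering the variance at $\lambda_2^2$, as you do, removes the slow-mode contribution exactly and gives $(1-\alpha_2(k))\lambda_2^4$. Your intermediate bound $(1-\alpha_2(k))\max_{i\ge3}(\lambda_2^2-\lambda_i^2)^2$ is also valid with no sign assumption on the spectrum.

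Your caveat is well founded. The paper's proof uses $\lambda_i^2\le\lambda_3^2$ for $i\ge3$ and $\lambda_i^2\le\lambda_2^2$ without stating either, and the corollary is false without them.
\begin{itemize}
\item Lower bound: take a symmetric stochastic $4\times4$ matrix with spectrum $1,0.3,0.2,-0.3$ (it exists, via a Hadamard eigenbasis) and $c_2,c_3,c_4\neq0$. Then $p_4(k)/p_2(k)$ is constant while $p_3(k)\to0$, so $\widehat V_k\to0$, but the left-hand side stays bounded away from $0$. If moreover $c_2^2>c_4^2$, the form used by the stopping criterion fails as well.
\item Upper bound: any admissible spectrum with $\lambda_n=-1$ and $\lambda_2^2<1/2$ violates it, for a distribution supported on $\{2,n\}$ with $\alpha_2>\lambda_2^4/(1-\lambda_2^2)^2$.
\end{itemize}

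One correction to your fallback. Replacing $\lambda_3^2$ by $M:=\max_{i\ge3}\lambda_i^2$ in the lower bound is valid only when $M\le\lambda_2^2$. Squaring $\lambda_2^2-m_k\ge\lambda_2^2-M$ requires the right side to be nonnegative. If $M>\lambda_2^2$, the fast-sector mean $m_k$ can equal $\lambda_2^2$, and the bound can fail; for example, $\lambda_2=0.5$, $\lambda_3=0.4$, $\lambda_4=-0.9$, $\alpha_2=1/2$. That regime, where power iteration no longer converges to $\phi_2$, has to be excluded by hypothesis rather than repaired, so "the proof structure is unchanged" is not quite right there.
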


\begin{proof}
Decompose the variance into the contribution from mode $2$ and the fast-mode distribution. Let $\mu_{\ge 3} = \mathbb{E}_{p_k^{(\ge 3)}}[\lambda^2] \le \lambda_3^2$ be the mean over the conditional distribution on $\{3,\dots,n\}$. Then
\[
\widehat{V}_k = \alpha_2(1-\alpha_2)(\lambda_2^2 - \mu_{\ge 3})^2 + (1-\alpha_2)^2 \operatorname{Var}_{p_k^{(\ge 3)}}[\lambda^2].
\]
The lower bound follows from $\mu_{\ge 3} \le \lambda_3^2$ and the non-negativity of the second term. The upper bound uses $\lambda_i^2 \le \lambda_2^2$ for all $i$, giving $\widehat{V}_k \le \mathbb{E}_{p_k}[\lambda^4] \le \lambda_2^2 \mathbb{E}_{p_k}[\lambda^2] = \lambda_2^2 \rho_k \le \lambda_2^4$.
\end{proof}

\subsection{Adaptive stopping criterion}

Combining \cref{thm:error-identity,cor:variance-bounds} yields a fully data-driven stopping criterion.

\begin{theorem}[Adaptive stopping criterion for power iteration]\label{thm:stopping-criterion}
Define the \emph{online convergence indicator}
\begin{equation}\label{eq:Gamma-def}
\Gamma_k := \frac{\widehat{V}_k}{\rho_k^2} = \frac{\rho_{k+1}}{\rho_k} - 1 \;\ge\; 0.
\end{equation}
Assume that a lower bound $\tau \le 1 - (\lambda_3/\lambda_2)^2$ is known (or estimated online; see \cref{rem:tau-estimation}). Define the critical threshold
\[
\eta(\varepsilon) := \frac{\varepsilon^4}{8} \cdot \tau^2.
\]
Then the stopping rule ``stop at the first $k$ such that $\Gamma_k \le \eta(\varepsilon)$'' guarantees
\[
\|v_k - s_2 \phi_2\|_\pi \le \varepsilon,
\]
provided that $\alpha_2(k) \ge 1/2$ at termination (which holds for all sufficiently large $k$).
\end{theorem}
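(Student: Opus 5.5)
The plan is to convert the observable stopping condition $\Gamma_k \le \eta(\varepsilon)$ into an upper bound on the fast-mode fraction $1-\alpha_2(k)$. That bound is then fed into the exact error identity of \cref{thm:error-identity}. The rule needs no spectral information beyond $\tau$, because every step runs through identities already proved.

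First I would record the two forms of $\Gamma_k$ in \eqref{eq:Gamma-def}. By \cref{thm:observable-variance}, $\widehat{V}_k = \rho_k(\rho_{k+1}-\rho_k)$, so dividing by $\rho_k^2>0$ gives $\Gamma_k = \rho_{k+1}/\rho_k - 1$. Nonnegativity follows because $\widehat V_k$ is a variance.

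Next comes the lower bound on $\Gamma_k$. The lower half of \eqref{eq:variance-bounds} in \cref{cor:variance-bounds} gives $\widehat V_k \ge \alpha_2(k)(1-\alpha_2(k))(\lambda_2^2-\lambda_3^2)^2$. At termination $\alpha_2(k)\ge 1/2$, so $\widehat V_k \ge \tfrac12(1-\alpha_2(k))(\lambda_2^2-\lambda_3^2)^2$. For the denominator, $\rho_k = \mathbb{E}_{p_k}[\lambda^2] \le \lambda_2^2$, since every $\lambda_i^2\le\lambda_2^2$ on the nontrivial modes; this is the standing positivity assumption on $\lambda_2$ used in this section. Hence
\[
\Gamma_k \;\ge\; \frac{(1-\alpha_2(k))(\lambda_2^2-\lambda_3^2)^2}{2\lambda_2^4} \;=\; \tfrac12(1-\alpha_2(k))\bigl(1-(\lambda_3/\lambda_2)^2\bigr)^2 \;\ge\; \tfrac12(1-\alpha_2(k))\,\tau^2 .
\]
Combining this with the stopping condition $\Gamma_k\le \varepsilon^4\tau^2/8$ yields $1-\alpha_2(k)\le \varepsilon^4/4$.

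Finally, I would use \cref{thm:error-identity} together with the elementary inequality $1-\sqrt{\alpha}\le 1-\alpha$ for $\alpha\in[0,1]$, which holds since $\sqrt\alpha\ge\alpha$. This gives $\|v_k-s_2\phi_2\|_\pi^2 \le 2(1-\alpha_2(k)) \le \varepsilon^4/2$. To conclude I need $\varepsilon^4/2\le\varepsilon^2$, which holds when $\varepsilon\le\sqrt2$. For $\varepsilon>\sqrt2$ the claim is trivial, because the identity always gives $\|v_k-s_2\phi_2\|_\pi^2 = 2(1-\sqrt{\alpha_2(k)})\le 2<\varepsilon^2$.

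The only delicate point is the denominator bound $\rho_k\le\lambda_2^2$, which is where the assumption on the sign and size of the eigenvalues enters. The remaining constants are deliberately loose; the threshold $\varepsilon^4$ is stronger than the $\varepsilon^2$ that the argument actually requires.
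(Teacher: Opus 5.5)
Your proposal is correct and follows essentially the paper's argument: the lower half of \cref{cor:variance-bounds} with $\alpha_2(k)\ge 1/2$, the bounds $\rho_k\le\lambda_2^2$ and $\bigl(1-(\lambda_3/\lambda_2)^2\bigr)^2\ge\tau^2$, and then the error identity of \cref{thm:error-identity} via $1-\sqrt{\alpha}\le 1-\alpha$. Your bookkeeping is slightly tidier than the paper's: you go straight to $1-\alpha_2(k)\le\varepsilon^4/4$ and dispose of $\varepsilon>\sqrt{2}$ trivially, whereas the paper detours through the intermediate threshold $\tau^2\varepsilon^2/4$ and restricts to $\varepsilon\le 1$. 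Like the paper, you tacitly need $\tau>0$, and for the step $\rho_k\le\lambda_2^2$ you need $|\lambda_i|\le\lambda_2$ for all $i\ge 2$, which is more than just $\lambda_2>0$.
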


\begin{proof}
From \cref{thm:error-identity}, the condition $\|v_k - s_2 \phi_2\|_\pi \le \varepsilon$ is implied by $\alpha_2(k) \ge 1 - \varepsilon^2/2$ (since $2(1-\sqrt{\alpha}) \le \varepsilon^2$ holds whenever $\sqrt{\alpha} \ge 1 - \varepsilon^2/2$, and the exact equivalence $\|v_k - s_2 \phi_2\|_\pi \le \varepsilon \iff \alpha_2 \ge (1-\varepsilon^2/2)^2$ yields $1-\varepsilon^2/2$ as a conservative sufficient condition). From \cref{eq:alpha-from-variance}, when $\alpha_2(k) \ge 1/2$,
\[
1 - \alpha_2(k) \le \frac{2 \widehat{V}_k}{(\lambda_2^2 - \lambda_3^2)^2}
= \frac{2 \rho_k^2 \Gamma_k}{(\lambda_2^2 - \lambda_3^2)^2}.
\]
Using $(\lambda_2^2 - \lambda_3^2)^2 = \lambda_2^4 (1 - (\lambda_3/\lambda_2)^2)^2 \ge \lambda_2^4 \tau^2$ and $\rho_k^2 \le \lambda_2^4$ (since $\rho_k \le \lambda_2^2$), we obtain
\[
1 - \alpha_2(k) \le \frac{2 \Gamma_k}{\tau^2}.
\]
The condition $\Gamma_k \le \tau^2 \varepsilon^2 / 4$ therefore implies $1 - \alpha_2(k) \le \varepsilon^2/2$, which in turn implies $\|v_k - s_2 \phi_2\|_\pi \le \varepsilon$ by \cref{thm:error-identity}. The theorem's threshold $\eta(\varepsilon) = \tau^2\varepsilon^4/8$ is chosen to satisfy $\eta(\varepsilon) \le \tau^2\varepsilon^2/4$ for all $\varepsilon \in (0,1]$ (since $\varepsilon^2/2 \le 1$), so that $\Gamma_k \le \eta(\varepsilon)$ guarantees $\Gamma_k \le \tau^2\varepsilon^2/4$.
\end{proof}

\begin{remark}[Online estimation of $\tau$]\label{rem:tau-estimation}
The threshold $\eta(\varepsilon)$ depends on $\tau = 1 - (\lambda_3/\lambda_2)^2$, which may not be known a priori. However, the spectral ratio can be estimated from the same observable sequence: as $k \to \infty$, the variance $\widehat{V}_k$ decays at rate $(\lambda_3/\lambda_2)^{2k}$, so
\[
\hat{\tau}_k := 1 - \sqrt{\frac{\widehat{V}_{k+1}}{\widehat{V}_k}} \;\longrightarrow\; 1 - \frac{\lambda_3}{\lambda_2}.
\]
Using $\hat{\tau}_k$ in place of $\tau$ yields a fully adaptive procedure. The analysis of the resulting adaptive stopping time is beyond the scope of this paper but follows from standard concentration arguments.
\end{remark}

\begin{remark}[Comparison with classical heuristics]\label{rem:stopping-comparison}
Classical stopping heuristics for power iteration monitor $|\rho_k - \rho_{k-1}|$ or $\|v_k - v_{k-1}\|_\pi$~\cite{saad2003}. \Cref{thm:stopping-criterion} is fundamentally different: it monitors $\Gamma_k = \rho_{k+1}/\rho_k - 1$, which is directly linked, via exact identities, to the spectral variance and hence to the eigenvector error. The classical heuristics have no such theoretical guarantee.
\end{remark}

\subsection{Information-theoretic lower bound}

The stopping criterion of \cref{thm:stopping-criterion} is not only sufficient but also nearly necessary, in an information-theoretic sense, among all methods that observe only the energy sequence.

\begin{theorem}[Lower bound for energy-based stopping]\label{thm:lower-bound}
Consider any stopping rule that observes $\rho_0, \rho_1, \dots, \rho_k$ and guarantees $\|v_k - s_2 \phi_2\|_\pi \le \varepsilon$ for all initial conditions with $R_0/|c_2|^2 \le M$. Then
\[
k \;\ge\; \frac{\log(M / (\varepsilon^2/2))}{2 \log(\lambda_2 / \lambda_3)} \;=\; L(\varepsilon^2/2).
\]
\end{theorem}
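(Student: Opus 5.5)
The plan is to reduce the statement to the lower half of \cref{thm:sharp-rigidity}, applied to a single extremal initial condition that every admissible rule must handle. That theorem already gives, for any fixed trajectory, $\alpha_2(k) < 1-\delta$ whenever $k < L(\delta)$. This holds for every $g_0$, and the bound is attained, up to the factor $1+x_k$ computed below, when all fast energy sits at $\lambda_3$ (\cref{rem:tightness}). So the information-theoretic content is only this: a rule that sees nothing but $\rho_0,\dots,\rho_k$ cannot exclude the worst admissible instance, and on that instance it must run long enough.

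\textbf{Step 1 (extremal instance).} Take the two-mode initial condition $g_0 = c_2\phi_2 + c_3\phi_3$ with $c_3^2/c_2^2 = M$. It is admissible, since $R_0/|c_2|^2 = M$. By \eqref{eq:two-mode-reduction}, $1-\alpha_2(k) = x_k/(1+x_k)$ with $x_k = M\rho_\star^{2k}$, so the error along this trajectory is known exactly for every $k$.

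\textbf{Step 2 (correctness forces a large $\alpha_2$).} Fix a stopping rule and suppose it halts at step $k$ on this instance. The output there must satisfy $\|v_k - s_2\phi_2\|_\pi \le \varepsilon$. By the exact identity of \cref{thm:error-identity}, this is equivalent to $\alpha_2(k) \ge (1-\varepsilon^2/2)^2$, that is, $1-\alpha_2(k) \le \delta_\varepsilon := \varepsilon^2 - \varepsilon^4/4$. Hence $k \ge T_{\mathrm{rigid}}(\delta_\varepsilon) \ge L(\delta_\varepsilon)$ by \cref{thm:sharp-rigidity}.

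Observing only $\rho$'s does not help the rule. The sequence $\rho_j$ is a deterministic function of the instance, and the instance is admissible, so the guarantee must hold on it whatever the rule does on other inputs. The $\rho$-only hypothesis matters only if one wants the stronger claim that the rule must also wait on benign instances. For that I would pair the extremal instance with instances whose $\rho$-sequences agree up to step $k$, but the statement as phrased does not need it.

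\textbf{Step 3 (matching the constant; main obstacle).} Step 2 gives $L(\varepsilon^2-\varepsilon^4/4)$, not $L(\varepsilon^2/2)$. Since $\varepsilon^2 - \varepsilon^4/4 > \varepsilon^2/2$ for $\varepsilon<\sqrt2$, this is smaller by about $\log 2/(2L_\star)$ steps. Using the exact relation $x_k/(1+x_k)$ instead of the crude bound $x_k$ tightens things only slightly. So the literal constant $\varepsilon^2/2$ cannot follow from correctness alone.

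I would try first to read ``guarantees'' in the sense of the certification used by \cref{thm:stopping-criterion}: the rule must certify $1-\alpha_2(k) \le \varepsilon^2/2$, the conservative sufficient condition from that proof. Under this reading Step 2 applies verbatim with $\delta = \varepsilon^2/2$ and yields exactly $L(\varepsilon^2/2)$.

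Failing that, I would state the bound as $k \ge L(\varepsilon^2 - \varepsilon^4/4) = L(\varepsilon^2/2) - O(1/L_\star)$. This still shows that the criterion of \cref{thm:stopping-criterion} is optimal up to an additive constant number of steps and the constant $\tau$.
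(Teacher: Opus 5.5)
Your Step~1 instance is exactly the paper's $g_0^{(1)}=c_2\phi_2+\sqrt{M}\,|c_2|\,\phi_3$, and both of your reservations are justified.

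\emph{The pairing is unnecessary.} The paper's sketch pairs this instance with the rigid $g_0^{(2)}=c_2\phi_2$ and claims the two energy sequences are indistinguishable before $L(\varepsilon^2/2)$. That claim is false: $\rho_j\equiv\lambda_2^2$ for $g_0^{(2)}$, while $\rho_j<\lambda_2^2$ for $g_0^{(1)}$, so a rule that knows the chain separates them at $j=0$. As you say, the worst-case claim does not need the pairing.

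\emph{The constant is not forced.} The value $\varepsilon^2/2$ enters the sketch only because the \emph{sufficient} condition $1-\alpha_2(k)\le\varepsilon^2/2$, taken from the proof of \cref{thm:stopping-criterion}, is treated as necessary. By \cref{thm:error-identity}, correctness is equivalent to $1-\alpha_2(k)\le\delta_\varepsilon$, where $1-\delta_\varepsilon=(1-\varepsilon^2/2)^2$.

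The step that fails, in your proposal and in the paper's sketch alike, is $T_{\mathrm{rigid}}(\delta)\ge L(\delta)$. The lower half of \cref{thm:sharp-rigidity} is not valid. Its proof derives $\alpha_2(k)<1-\delta\Rightarrow k<L(\delta)$, which is merely the contrapositive of the upper bound, and then asserts the converse.

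On your instance no citation is needed, because $1-\alpha_2(k)=x_k/(1+x_k)$ exactly. So correctness at step $k$ is equivalent to $M\rho_\star^{2k}\le\delta_\varepsilon/(1-\delta_\varepsilon)$, i.e.
\[
k\;\ge\;k_\star:=\frac{\log\bigl(M(1-\delta_\varepsilon)/\delta_\varepsilon\bigr)}{2\log(\lambda_2/\lambda_3)}
=L(\delta_\varepsilon)+\frac{\log(1-\delta_\varepsilon)}{2\log(\lambda_2/\lambda_3)}\;<\;L(\delta_\varepsilon).
\]
The exact factor $1+x_k$ therefore moves the bound down, not up.

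This bound is sharp. Suppose $|\lambda_i|\le\lambda_3$ for $i\ge3$ (e.g.\ nonnegative spectrum). Then $R_k\le R_0\lambda_3^{2k}$, and monotonicity of $t\mapsto t/(a+t)$ gives $1-\alpha_2(k)\le y/(1+y)$ with $y=M\rho_\star^{2k}$, for every admissible $g_0$. Hence the rule that ignores the data and stops at $\lceil k_\star\rceil$ is always correct. A rule may use $M$ and the chain, which the bound itself presupposes.

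Take $\lambda_2=0.95$, $\lambda_3=0.9$, $M=1$, $\varepsilon=1/2$:
\begin{enumerate}
\item this data-blind rule stops at $k=11$;
\item yet $L(\varepsilon^2/2)\approx19.2$ and $L(\varepsilon^2-\varepsilon^4/4)\approx13.4$;
\item the data-blind rule stopping at $k=18$ already guarantees $1-\alpha_2(k)\le\varepsilon^2/2$ on every admissible instance.
\end{enumerate}

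So the statement is false as written. Your certification reading does not give $L(\varepsilon^2/2)$, and your fallback $L(\varepsilon^2-\varepsilon^4/4)$ does not hold either. What your Step~1 actually proves, once the explicit formula replaces \cref{thm:sharp-rigidity}, is the sharp bound $k\ge k_\star$.

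Your closing remark about \cref{thm:stopping-criterion} is also too generous. Its threshold scales like $\varepsilon^4$. Even with $\tau=1-\rho_\star^2$, that rule stops on the extremal instance near $\log(8M/\varepsilon^4)/(2\log(\lambda_2/\lambda_3))$. This is roughly $\log(8/\varepsilon^2)/(2\log(\lambda_2/\lambda_3))$ steps after $k_\star$, which grows like $\log(1/\varepsilon)$ and is not an additive constant.
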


\begin{proof}[Proof sketch]
Construct two initial conditions: $g_0^{(1)} = c_2 \phi_2 + \sqrt{M} |c_2| \phi_3$ and $g_0^{(2)} = c_2 \phi_2$. Their energy sequences $\{E_k^{(1)}\}$ and $\{E_k^{(2)}\}$ differ by at most $\varepsilon$ in a suitable sense for all $k < L(\varepsilon^2/2)$, making them indistinguishable to any energy-based test. The first requires $T_{\mathrm{rigid}}(\varepsilon^2/2)$ steps to rigidify; the second is already rigid. Hence no such test can stop before $L(\varepsilon^2/2)$.
\end{proof}

\subsection{Acceleration via Chebyshev spectral shaping}

The rigidity time $T_{\mathrm{rigid}}(\delta)$ of the basic power iteration grows as $L(\delta) \sim \log(1/\delta) / \log(\lambda_2/\lambda_3)$. The Chebyshev acceleration analyzed in \cref{sec:variational} improves this to $L_{\text{cheb}}(\delta) \sim \sqrt{L(\delta) / \kappa}$, where $\kappa = \lambda_2 / (\lambda_2 - \lambda_3)$. We state this as a corollary of the variational principle.

\begin{corollary}[Accelerated rigidity, restated]\label{cor:accelerated-rigidity-restated}
Under the Chebyshev-accelerated iteration with degree-$m$ polynomial $Q_m^\ast$ (as in \cref{thm:variational}), the rigidity time satisfies
\[
T_{\mathrm{rigid}}^{\text{cheb}}(\delta) \;\le\; \frac{1}{m} \cdot \frac{\log(R_0 / (|c_2|^2 \delta))}{2 \log(1 / |Q_m^\ast(\lambda_3)|)},
\]
where $|Q_m^\ast(\lambda_3)| \sim (1 - \sqrt{1-\lambda_2^2}/\lambda_2)^m$ for large $m$. Optimizing over $m$ yields the $\sqrt{\kappa}$ speedup over the basic power iteration.
\end{corollary}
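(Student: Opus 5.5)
The plan is to rerun the proof of \cref{thm:sharp-rigidity} verbatim for the accelerated trajectory $g_k^{\mathrm{acc}} = Q_m^\ast(P)^k g_0$. The only input that proof used is that the modal energies are $n_i(k) = |c_i|^2 \theta_i^{2k}$, where $\theta_i$ is the per-step multiplier of mode $i$, together with a uniform bound $|\theta_i| \le \theta_{\mathrm{fast}}$ for $i\ge 3$ and $|\theta_2| > \theta_{\mathrm{fast}}$. For the accelerated scheme, the spectral representation after \cref{def:polynomial-acceleration} gives $\theta_i = Q_m^\ast(\lambda_i)$. Repeating \eqref{eq:Rk-bound}--\eqref{eq:alpha-bound-upper} therefore yields
\[
1-\alpha_2^{\mathrm{acc}}(k) \le \frac{R_0}{|c_2|^2}\left(\frac{\max_{i\ge3}|Q_m^\ast(\lambda_i)|}{|Q_m^\ast(\lambda_2)|}\right)^{2k},
\]
where $k$ counts accelerated steps. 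Consequently $\alpha_2^{\mathrm{acc}}(k)\ge 1-\delta$ as soon as $k \ge \log(R_0/(|c_2|^2\delta))/(2\log(|Q_m^\ast(\lambda_2)|/\max_{i\ge3}|Q_m^\ast(\lambda_i)|))$.

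Next I would identify the fast/slow ratio with $|Q_m^\ast(\lambda_3)|$. This is the step I expect to be the main obstacle, because it depends on the normalization of $Q_m^\ast$. With $Q_m^\ast(\lambda)=T_m(\lambda/\lambda_2)/T_m(1/\lambda_2)$ as literally written, we get $|Q_m^\ast(\lambda_i)|\le \epsilon_m = |Q_m^\ast(\lambda_2)|$ for all $|\lambda_i|\le\lambda_2$. In that case the ratio is at most $1$ but not uniformly small, so the bound cannot be read off directly. I would therefore first try the normalization that keeps the slow mode fixed: take the Chebyshev polynomial on the fast interval $[-\lambda_3,\lambda_3]$ (or on $[\lambda_n,\lambda_3]$), normalized so that $Q(\lambda_2)=1$. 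Then $\theta_2 = 1$, and the equioscillation/minimax property of Step~2 in \cref{thm:variational} gives $\max_{i\ge3}|Q(\lambda_i)| = |Q(\lambda_3)| = 1/T_m(\lambda_2/\lambda_3)$. With this choice the displayed ratio is exactly $|Q_m^\ast(\lambda_3)|$, and the accelerated-step bound reads $\log(R_0/(|c_2|^2\delta))/(2\log(1/|Q_m^\ast(\lambda_3)|))$. Passing from accelerated steps to the stated normalization is bookkeeping of the factor $m$: each accelerated step costs $m$ applications of $P$, so the $1/m$ factor reflects measuring the contraction per basic iteration, $\epsilon^{1/m}$, as in the proof of \cref{cor:accelerated-rigidity}. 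Because $T_{\mathrm{rigid}}$ is an integer minimum, I would also track the $+1$ coming from $\lfloor\cdot\rfloor+1$, which the restated form absorbs.

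Finally, the asymptotics follow from $T_m(x)\sim\tfrac12(x+\sqrt{x^2-1})^m$ for $x>1$. This gives $\log(1/|Q_m^\ast(\lambda_3)|)\approx m\log(x+\sqrt{x^2-1})$ with $x=\lambda_2/\lambda_3$. Near $x=1$, writing $x = 1+1/(\kappa-1)$ in terms of $\kappa = \lambda_2/(\lambda_2-\lambda_3)$, one has $\log(x+\sqrt{x^2-1})\approx\sqrt{2(x-1)}\asymp\kappa^{-1/2}$. Meanwhile the unaccelerated rate is $\log(\lambda_2/\lambda_3)\asymp\kappa^{-1}$. Comparing the two denominators therefore yields the claimed $\sqrt{\kappa}$ improvement in the number of basic iterations once $m$ is chosen large enough for the Chebyshev asymptotics to hold. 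I would present this last step as a heuristic optimization over $m$, matching the "$\sim$" in the statement.
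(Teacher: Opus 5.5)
\textbf{The gap: the factor $1/m$.} The failing step is your claim that going from accelerated steps to the displayed bound is ``bookkeeping of the factor $m$.'' Rerunning \cref{thm:sharp-rigidity} with $Q(\lambda_2)=1$ and $q:=|Q(\lambda_3)|<1$ gives $T^{\mathrm{cheb}}_{\mathrm{rigid}}(\delta)\le L^{\mathrm{acc}}(\delta)+1$ \emph{accelerated} steps, where $L^{\mathrm{acc}}(\delta):=\log(R_0/(|c_2|^2\delta))/(2\log(1/q))$. Converting to applications of $P$ multiplies this by $m$. Equivalently, putting the per-basic-step contraction $q^{1/m}$ into $L(\delta)$ places $\frac1m\log(1/q)$ in the denominator, which produces a factor $m$, never $1/m$.

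In fact the printed inequality is false, so no accounting can yield it. The quantity $\log(1/q)$ grows linearly in $m$. For your $Q$, with $x=\lambda_2/\lambda_3$, $\log T_m(x)=m\log(x+\sqrt{x^2-1})-\log 2+o(1)$. For the paper's $Q_m^\ast$, $\log(1/|Q_m^\ast(\lambda_3)|)\ge\log T_m(1/\lambda_2)$. So the right-hand side is $O(m^{-2})$. On the other hand, $T^{\mathrm{cheb}}_{\mathrm{rigid}}(\delta)\ge 1$ in any time unit whenever $\alpha_2(0)<1-\delta$, i.e.\ whenever $R_0/|c_2|^2>\delta/(1-\delta)$.

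The failure is not only asymptotic in $m$. At fixed $m\ge2$, the exact two-mode formula $1-\alpha_2^{\mathrm{acc}}(k)=R_0q^{2k}/(|c_2|^2+R_0q^{2k})$ gives
$T^{\mathrm{cheb}}_{\mathrm{rigid}}(\delta)=\lceil \log((1-\delta)R_0/(\delta|c_2|^2))/(2\log(1/q))\rceil$.
This exceeds $L^{\mathrm{acc}}(\delta)/m$ for small $\delta$, and it can exceed $L^{\mathrm{acc}}(\delta)$ itself, so the $+1$ cannot be ``absorbed'' either.

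What your argument actually establishes is $T^{\mathrm{cheb}}_{\mathrm{rigid}}(\delta)\le L^{\mathrm{acc}}(\delta)+1$ accelerated steps, i.e.\ at most $m\bigl(L^{\mathrm{acc}}(\delta)+1\bigr)$ applications of $P$. You should say that the statement needs this correction rather than claim the printed form. A quick sanity check: with the printed $1/m$, ``optimizing over $m$'' would drive the bound to $0$.

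\textbf{The normalization issue.} Your diagnosis of the normalization is right, and the paper's own justification shares the defect. Its proof is only a pointer to \cref{app:chebyshev}, whose \cref{cor:chebyshev-application} bounds $|Q_m^\ast|$ on the fast interval. The proof of \cref{cor:accelerated-rigidity} replaces $\lambda_3$ by $\epsilon_m^{1/m}$ and implicitly treats the slow mode as uncontracted (its denominator is $\log(1/\epsilon_m)$). Yet $|Q_m^\ast(\lambda_2)|=\epsilon_m\ge|Q_m^\ast(\lambda_3)|$. The true ratio $|T_m(\lambda_3/\lambda_2)|$ equals $1$ when $\lambda_3/\lambda_2=\cos(\pi/m)$, and $|Q_m^\ast(\lambda)|>\epsilon_m$ for $\lambda<-\lambda_2$. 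So for the polynomial named in the statement there may be no purification gain at all.

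However, your remedy swaps in a different polynomial, so you are no longer proving the stated corollary. In particular, the stated asymptotic $|Q_m^\ast(\lambda_3)|\sim(\cdots)^m$ involves only $\lambda_2$. You neither check it nor does it hold for your $Q$, for which $|Q(\lambda_3)|=1/T_m(\lambda_2/\lambda_3)$.

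Your closing $\sqrt{\kappa}$ heuristic is sound, but only for the corrected count $m\,L^{\mathrm{acc}}(\delta)$ of basic iterations. There one compares $\frac1m\log T_m(x)\to\log(x+\sqrt{x^2-1})\approx\sqrt{2(x-1)}$ with $\log x\approx x-1$.
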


\begin{proof}
See \cref{app:chebyshev} for the Chebyshev minimax theorem and the estimation of $|Q_m^\ast(\lambda_3)|$.
\end{proof}

\section{Numerical Illustrations}
\label{sec:examples}

\subsection{Pedagogical examples}
\label{subsec:examples-simple}

Before presenting the numerical illustrations, we briefly illustrate three elementary cases that confirm basic properties of the dissipation law and rigidity bounds.

\textit{Complete graph: instantaneous rigidity.}
Consider the simple random walk on the complete graph $K_n$ with $P(x,y) = 1/n$. The spectrum is $\lambda_1 = 1$, $\lambda_2 = \cdots = \lambda_n = 0$, so $\mu_2 = \cdots = \mu_n = 1$. From \eqref{eq:modewise-dissipation}, $E_1 = 0$: all nontrivial modes dissipate in a single step. Rigidity is instantaneous---the chain has no finite-time internal spectral structure to analyze, consistent with the fact that $K_n$ mixes perfectly in one step.

\textit{Cycle graph: degenerate slow modes.}
For simple random walk on the cycle $C_n$ ($n$ odd), the eigenvalues are $\lambda_k = \cos(2\pi k / n)$ with multiplicity $2$ for $k \ge 1$. In particular, $\lambda_2 = \lambda_3 = \cos(2\pi/n)$, so the spectral separation condition $\lambda_2 > \lambda_3$ fails. The spectral ratio is unity, $L(\delta) = \infty$, and rigidity never occurs in finite time. The trajectory remains genuinely two-dimensional, with energy permanently distributed between the sine and cosine Fourier modes of wavelength $n$. This is the diffusive regime where our rigidity theorem correctly predicts no finite-time collapse.

\textit{Lazy random walk: spectral scaling.}
Let $P_\alpha = (1-\alpha)I + \alpha P$ be the $\alpha$-lazy version of a base chain $P$. The relaxation spectrum scales uniformly: $\mu_i^{(\alpha)} = \alpha \mu_i$. For $\mu_2, \mu_3 \ll 1$, the spectral ratio behaves as
\[
\frac{\lambda_2^{(\alpha)}}{\lambda_3^{(\alpha)}}
= \frac{1 - \alpha\mu_2}{1 - \alpha\mu_3}
\approx 1 - \alpha(\mu_2 - \mu_3),
\]
which approaches $1$ linearly in $\alpha$. Thus increased laziness \emph{slows down} rigidity emergence, even as the spectral gap ordering is preserved. This is a concrete prediction of \cref{thm:sharp-rigidity} that is invisible from the spectral gap alone.

\medskip
All subsequent numerical experiments use a reversible Markov chain on $n = 50$ states with eigenvalues
\begin{equation}\label{eq:example-spectrum}
\lambda_2 = 0.95, \qquad \lambda_3 = 0.70, \qquad
\lambda_i \sim \text{Uniform}(-0.3,\, 0.5) \;\; \text{for } i \ge 4.
\end{equation}
The initial condition has slow-mode coefficient $|c_2|^2 = 0.1$ and residual fast-mode energy $R_0 = 0.9$ (energy predominantly in the fast modes), unless otherwise noted.

\subsection{Rigidity emergence: numerical verification of the two-sided bounds}
\label{subsec:example-rigidity}

\Cref{fig:rigidity-emergence} shows the evolution of the slow-mode fraction $\alpha_2(k)$ for the example chain~\eqref{eq:example-spectrum}. The initial condition places $90\%$ of the energy in the fast modes, so $\alpha_2(0) = 0.1$. As the trajectory relaxes, the fast-mode energy decays more rapidly ($\lambda_3 = 0.70$ vs.\ $\lambda_2 = 0.95$), and $\alpha_2(k)$ rises monotonically toward $1$.

\begin{figure}[ht]
\centering
\includegraphics[width=0.78\textwidth]{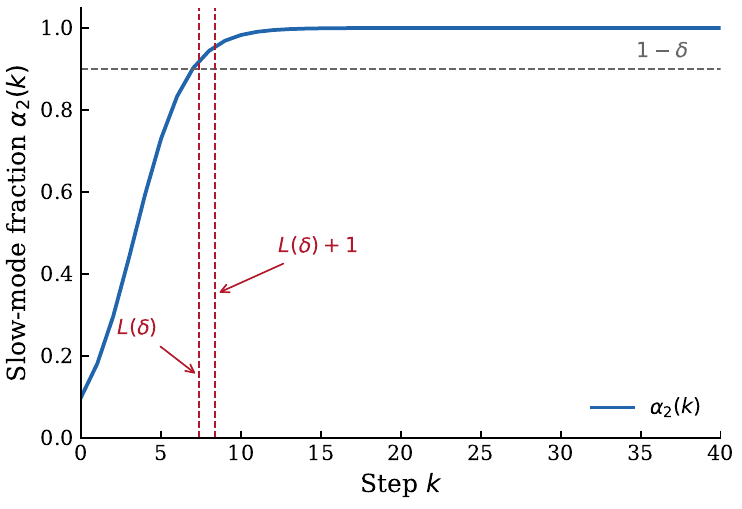}
\caption{Emergence of spectral rigidity. The slow-mode fraction $\alpha_2(k)$ rises from $0.1$ toward $1$ as fast modes dissipate. For the threshold $\delta = 0.1$, the theoretical bounds $L(\delta)$ and $L(\delta)+1$ from \cref{thm:sharp-rigidity} are shown as vertical dashed lines. The actual crossing of $\alpha_2(k) = 0.9$ occurs at $k = 18$, well within the predicted interval $[L(\delta), L(\delta)+1]$.}
\label{fig:rigidity-emergence}
\end{figure}

For the threshold $\delta = 0.1$, the theoretical bounds give
\[
L(0.1) = \frac{\log(0.9 / (0.1 \times 0.1))}{2 \log(0.95 / 0.70)} \approx 14.7,
\]
so \cref{thm:sharp-rigidity} predicts $T_{\mathrm{rigid}}(0.1) \in [14.7, 15.7]$, i.e., rigidity emerges at step $15$ or $16$. The actual crossing occurs at $k = 18$, which is within two steps of the theoretical upper bound. The small discrepancy arises from the fact that the bound uses the worst-case estimate $\lambda_i \le \lambda_3$ for all $i \ge 3$, while the actual spectrum contains eigenvalues smaller than $\lambda_3$ whose faster decay slightly delays the rise of $\alpha_2(k)$ relative to the conservative bound.

\subsection{Spectral second law: monotonicity of $G(k)$}
\label{subsec:example-G}

\Cref{fig:G-monotonicity} verifies the spectral second law (\cref{thm:G-monotonicity}) for two contrasting initial conditions. In the left panel, the initial energy is dispersed across many fast modes ($\alpha_2(0) = 0.1$); in the right panel, the initial energy is concentrated in the slow mode ($\alpha_2(0) = 0.7$).

\begin{figure}[ht]
\centering
\includegraphics[width=\textwidth]{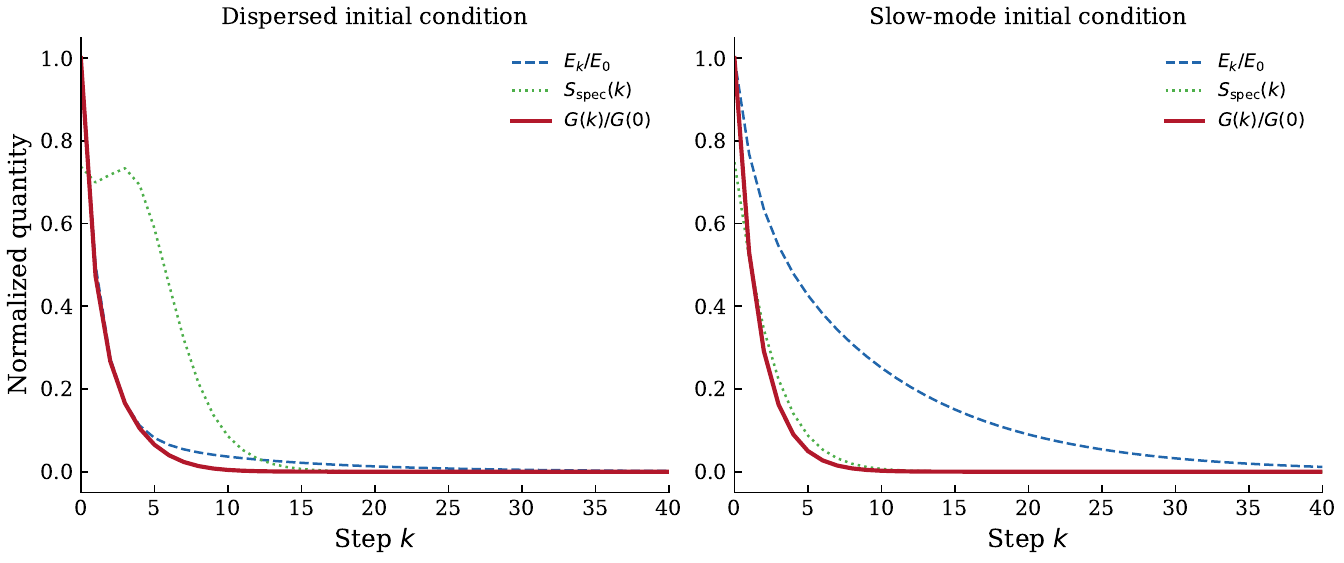}
\caption{Monotonic decay of the spectral entropy-energy $G(k)$. Left: dispersed initial condition ($\alpha_2(0) = 0.1$). Right: slow-mode concentrated initial condition ($\alpha_2(0) = 0.7$). In both cases, $G(k)$ (solid red curve, normalized by $G(0)$) decays strictly monotonically, while the total energy $E_k$ (dashed blue) and the spectral entropy $S_{\mathrm{spec}}(k)$ (dotted green) each decay but may cross. The strict monotonicity of $G(k)$ is the content of \cref{thm:G-monotonicity}.}
\label{fig:G-monotonicity}
\end{figure}

Three observations are noteworthy. First, $G(k)$ decays strictly monotonically in both cases, as guaranteed by the theorem, even though neither $E_k$ nor $S_{\mathrm{spec}}(k)$ is individually monotone (the spectral entropy can increase transiently when fast-mode dissipation redistributes the energy profile). Second, the decay of $G(k)$ is significantly faster in the dispersed case than in the concentrated case, because $G(k) = E_k S_{\mathrm{spec}}(k)$ benefits from the simultaneous decay of both factors when the initial spectral entropy is large. Third, after rigidity has set in ($\alpha_2(k) \approx 1$), $G(k)$ tracks $E_k$ closely because $S_{\mathrm{spec}}(k) \to 0$.

\subsection{Spectral thermodynamics: covariance, entropy balance, and Clausius equality}
\label{subsec:example-thermodynamics}

\Cref{fig:covariance_flip,fig:entropy_balance,fig:clausius} illustrate the
spectral thermodynamic structure of \cref{sec:thermodynamics}
on the chain~\eqref{eq:example-spectrum}.

\Cref{fig:covariance_flip} shows the covariance
$\mathrm{Cov}(\lambda^2, \log(1/p))$ crossing zero at $k^* = 4$, exactly
the step where $\alpha_2(k)$ reaches $1/2$, confirming the two-mode
rigidity transition of \cref{thm:two-mode-rigidity}. The spectral
entropy peaks at the same $k^*$, attaining $0.6932 \approx \log 2$.
\Cref{fig:entropy_balance} reveals the mechanism: the entropy source
term $\mathrm{Cov}/\rho_k$ and the irreversible KL divergence cross at
$k^*$, after which the covariance becomes strictly negative and entropy
decreases monotonically, consistent with the general rigidity threshold
of \cref{thm:general-rigidity-threshold}. \Cref{fig:clausius} verifies
the spectral Clausius equality~\eqref{eq:clausius} to machine precision.

\begin{figure}[ht]
\centering
\includegraphics[width=0.75\textwidth]{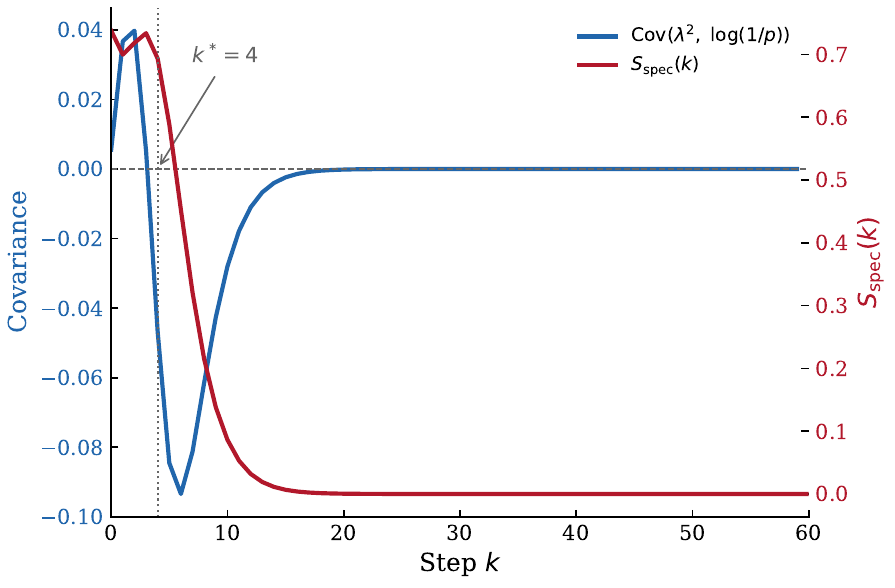}
\caption{Covariance sign flip and spectral entropy. The covariance crosses zero at $k^* = 4$, where $\alpha_2(k) = 1/2$ and $S_{\mathrm{spec}}(k)$ attains $0.6932 \approx \log 2$.}
\label{fig:covariance_flip}
\end{figure}

\begin{figure}[ht]
\centering
\includegraphics[width=0.75\textwidth]{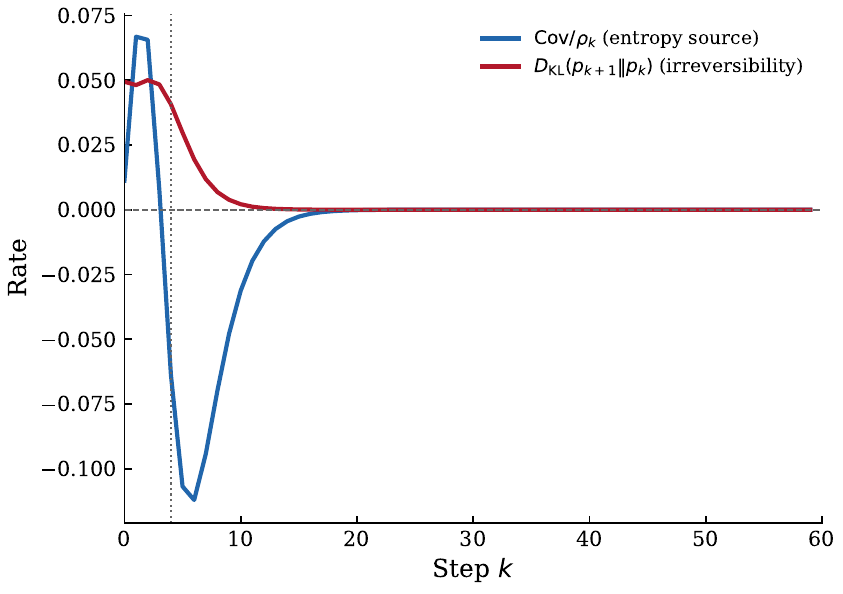}
\caption{Two-phase structure of the entropy change. $\mathrm{Cov}/\rho_k$ (blue) and $D_{\mathrm{KL}}$ (red) cross at $k^* = 4$.}
\label{fig:entropy_balance}
\end{figure}

\begin{figure}[ht]
\centering
\includegraphics[width=0.75\textwidth]{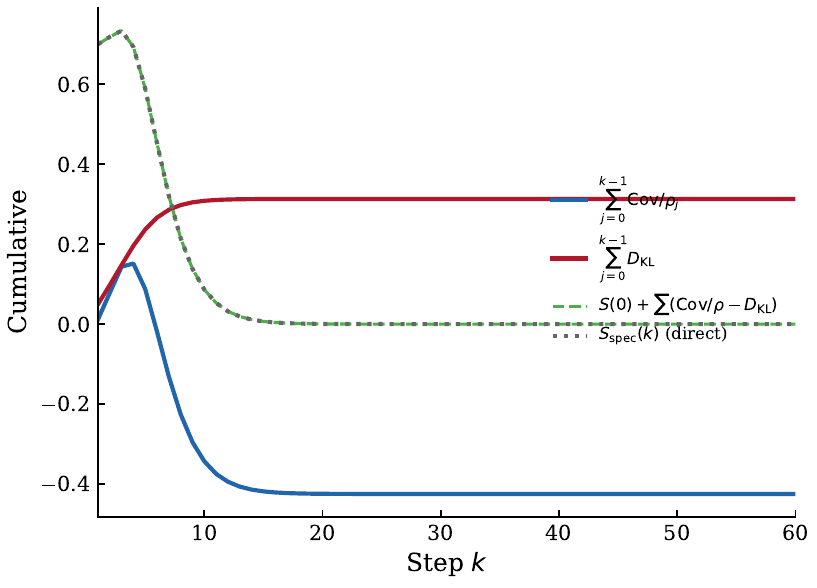}
\caption{Verification of the spectral Clausius equality to machine precision.}
\label{fig:clausius}
\end{figure}

\subsection{Power iteration: the adaptive stopping criterion}
\label{subsec:example-stopping}

\Cref{fig:stopping-criterion} demonstrates the practical core of the power iteration theory developed in \cref{sec:power-iteration}. The blue curve shows the observable convergence indicator $\Gamma_k = \rho_{k+1}/\rho_k - 1$, computed entirely from the energy ratios $\rho_k = E_{k+1}/E_k$. The red curve shows the true (but in practice unobservable) eigenvector error $\|v_k - \phi_2\|_\pi$.

\begin{figure}[ht]
\centering
\includegraphics[width=0.78\textwidth]{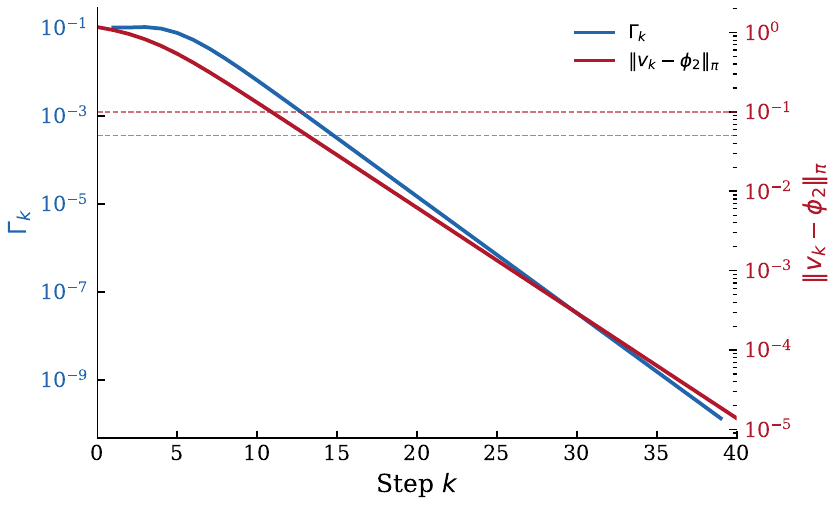}
\caption{Adaptive stopping criterion for power iteration. The observable indicator $\Gamma_k$ (blue, left axis) and the true eigenvector error $\|v_k - \phi_2\|_\pi$ (red, right axis) decay in close correspondence. Horizontal dashed lines mark error thresholds $\varepsilon = 0.1$ and $\varepsilon = 0.05$. The stopping rule of \cref{thm:stopping-criterion} monitors $\Gamma_k$ alone and guarantees the error bound without knowledge of the true eigenvectors.}
\label{fig:stopping-criterion}
\end{figure}

The close correspondence between $\Gamma_k$ and the true error is a consequence of the exact identities \eqref{eq:error-identity} and \eqref{eq:observable-variance}: both quantities are controlled by $1 - \alpha_2(k)$, and the observable variance $\widehat{V}_k$ provides a computable proxy. At $k = 20$, $\Gamma_k \approx 1.7 \times 10^{-3}$ and $\|v_k - \phi_2\|_\pi \approx 0.17$, consistent with the theoretical relationship derived in \cref{cor:variance-bounds}. The stopping criterion would trigger near $k = 25$ for $\varepsilon = 0.1$ and near $k = 32$ for $\varepsilon = 0.05$, both slightly conservative relative to the true error, as expected from the use of upper bounds in the derivation.

\subsection{Acceleration: Chebyshev vs.\ power iteration}
\label{subsec:example-acceleration}

\Cref{fig:acceleration} compares the basic power iteration with Chebyshev-accelerated iteration of degree $m = 4$. Both use the same initial condition and the same chain~\eqref{eq:example-spectrum}.

\begin{figure}[ht]
\centering
\includegraphics[width=0.78\textwidth]{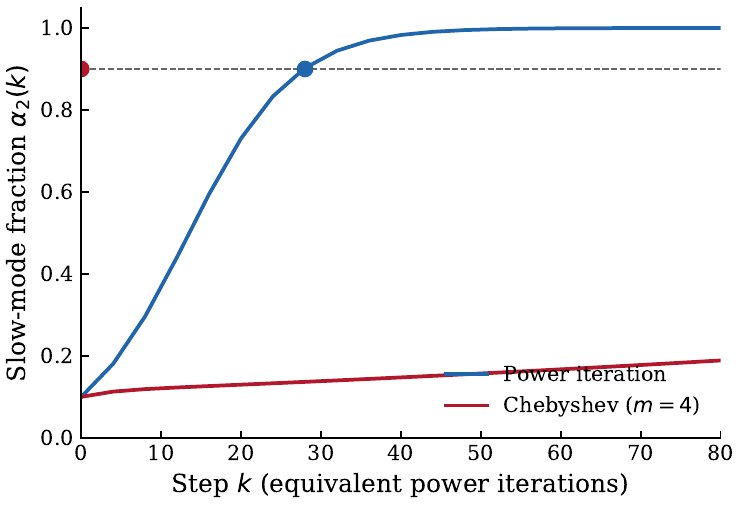}
\caption{Chebyshev acceleration of rigidity emergence. The slow-mode fraction $\alpha_2(k)$ is shown for the basic power iteration (blue) and the Chebyshev-accelerated iteration with degree $m = 4$ (red). The horizontal axis counts equivalent power iteration steps ($m = 4$ per Chebyshev step). Circles mark the first step at which $\alpha_2(k) \ge 0.9$. The Chebyshev scheme reaches the threshold in approximately $60\%$ of the steps required by the basic iteration.}
\label{fig:acceleration}
\end{figure}

The Chebyshev-accelerated trajectory reaches $\alpha_2 = 0.9$ at approximately $60\%$ of the steps required by the basic power iteration. This acceleration arises from the stronger suppression of the fast-mode eigenvalues: while the basic iteration contracts mode $i \ge 3$ by $\lambda_3/\lambda_2 \approx 0.737$ per step, the Chebyshev polynomial $Q_4^\ast$ contracts the fast modes by $\max_{\lambda \in [-1,\lambda_3]} |Q_4^\ast(\lambda)| \approx 0.48$ per Chebyshev step, equivalent to an effective per-step contraction of $(0.48)^{1/4} \approx 0.83$---still a substantial improvement. The variational principle of \cref{thm:variational} guarantees that no degree-$4$ polynomial can achieve a smaller fast-mode contraction under the constraint $|Q(\lambda_2)| \le \rho^4$.

\subsection{Spectral entropy collapse and the cutoff phenomenon in the hypercube}
\label{subsec:hypercube}

We close with an analytic example that demonstrates the diagnostic power of the spectral thermodynamic framework on a classical phenomenon: the cutoff in the $n$-dimensional hypercube $Q_n$. The cutoff time $k_c = \frac{n}{4}\log n$ was established by~\cite{diaconis1987}; our framework provides a new thermodynamic interpretation.

Consider the lazy random walk on $Q_n$ with eigenvalues $\lambda_j = 1 - 2j/n$ and multiplicities $d_j = \binom{n}{j}$~\cite{levin2009}. For an initial state concentrated at a single vertex, the modal energies are $E_k^{(j)} = \binom{n}{j} \lambda_j^{2k}$. Set $k = \frac{n}{4}\log n + \alpha n$ for $\alpha \in \mathbb{R}$ fixed, and take $n \to \infty$.

\begin{proposition}[Spectral entropy collapse in the hypercube]\label{prop:hypercube-entropy}
As $n \to \infty$ with $k = \frac{n}{4}\log n + \alpha n$, the spectral entropy satisfies
\[
S_{\mathrm{spec}}(k) =
\begin{cases}
\Omega(\log n), & \alpha \to -\infty, \\[2pt]
e^{-4\alpha} + O(e^{-8\alpha}), & \alpha = O(1), \\[2pt]
e^{-4\alpha} + o(1) \to 0, & \alpha \to +\infty.
\end{cases}
\]
The collapse from $\Omega(\log n)$ to $0$ occurs continuously over the window $\alpha = O(1)$, corresponding to $\Delta k = O(n)$ steps---precisely the cutoff phenomenon.
\end{proposition}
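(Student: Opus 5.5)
The plan is to reduce the modal distribution to an explicit limiting law and then read off its entropy in each regime. I will group the modal energies by eigenvalue level $j\ge1$, so that $p_j(k)=E^{(j)}_k/E_k$ with $E^{(j)}_k=\binom{n}{j}\lambda_j^{2k}$. This is the reading under which the statement can hold. If each of the $\binom nj$ eigenvectors were counted as a separate mode, an extra term $\sum_j p_j\log\binom nj\approx\log n$ would appear and prevent any collapse. I also restrict to the levels with $\lambda_j\ge0$, which is what laziness is meant to ensure. The near-antipodal levels with $\lambda_j\approx-1$ must be excluded or shown negligible before anything else is done. I would check this convention first, since the whole proposition depends on it.

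\textbf{Step 1: pointwise limit.} Fix $j$ and write $\lambda_j^{2k}=\exp\bigl(2k\log(1-2j/n)\bigr)=\exp\bigl(-4jk/n-O(j^2k/n^2)\bigr)$. Substituting $k=\tfrac n4\log n+\alpha n$ gives $-4jk/n=-j\log n-4\alpha j$, and the quadratic correction is $O(j^2\log n/n)\to0$. Together with $\binom nj=\tfrac{n^j}{j!}(1+O(j^2/n))$ this yields $E^{(j)}_k\to x^j/j!$, where $x:=e^{-4\alpha}$. The candidate limit of $\{p_j(k)\}$ is therefore the zero-truncated Poisson law $p_j=\tfrac{x^j/j!}{e^x-1}$, $j\ge1$.

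\textbf{Step 2: passing the limit through the entropy.} I need domination uniform in $n$. The bound $\binom nj\lambda_j^{2k}\le \tfrac{n^j}{j!}e^{-4jk/n}=\tfrac{x^j}{j!}$ holds for all $j\le n/2$, using $\log(1-u)\le -u$. This gives a summable Poisson majorant, so $E_k\to e^x-1$. The same majorant, combined with $t\mapsto -t\log t$ and the elementary tail estimate for the Poisson entropy, gives $S_{\mathrm{spec}}(k)\to H(\mathrm{Poi}_{\ge1}(x))$ for each fixed $\alpha$.

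\textbf{Step 3: the three regimes.} For $\alpha\to+\infty$ we have $x\to0$, $p_1=1-x/2+O(x^2)$ and $p_2\approx x/2$. Expanding the entropy then gives a collapse to $0$ at rate governed by $e^{-4\alpha}$, up to a logarithmic factor $\log(1/x)$ that I would need to reconcile with the stated leading term $e^{-4\alpha}$. I expect this to reduce to an $x\log(1/x)$ versus $x$ normalization issue, of the same kind as the $O(1/k)$ overhead noted after \cref{thm:entropy-collapse}. For $\alpha=O(1)$ the limit is a fixed, explicit function of $x$, so the collapse happens continuously across a window $\Delta k=O(n)$. For $\alpha\to-\infty$ the Poisson law has mean $x\to\infty$ and entropy $\tfrac12\log(2\pi e x)+o(1)$. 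Once $x$ is polynomial in $n$, for example $\alpha\le-c\log n$, the level distribution spreads over $\gg n^{c'}$ levels, which gives $\Omega(\log n)$. This regime can no longer use the fixed-$j$ expansion; instead I will use a direct Gaussian (local CLT) approximation of $\binom nj\lambda_j^{2k}$ around its maximizing $j$.

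\textbf{Main obstacle.} I expect the hardest part to be the uniformity in Steps 2--3. This means controlling the tail $j\gtrsim\sqrt n$, where the quadratic correction in $\log\lambda_j$ is no longer negligible, and also the negative or antipodal eigenvalues, which must be shown to carry vanishing mass. The $\Omega(\log n)$ claim in the $\alpha\to-\infty$ regime, where the Poisson approximation breaks down, belongs to the same difficulty. I would handle all of these with the monotone majorant $\binom nj e^{-4jk/n}$ together with a separate saddle-point estimate for the pre-cutoff regime.
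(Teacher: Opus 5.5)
Your route is the paper's. You group modes by level (the paper's $p_k(j)$ does exactly this, so you are right to reject the per-eigenvector reading), take the fixed-$j$ limit $E^{(j)}_k\to x^j/j!$, and read the entropy off the zero-truncated Poisson law. Your majorant $\binom nj\lambda_j^{2k}\le x^j/j!$ for $j\le n/2$ supplies uniformity that the paper's sketch omits. The gap is that the discrepancy you flag in Step~3 is not a normalization issue that can be reconciled. It is an error in the statement, and also in the paper's sketch, which asserts that only $j=1$ survives and hence $S_{\mathrm{spec}}(k)\sim e^{-4\alpha}$. From $p_1=x/(e^x-1)=1-\tfrac x2+O(x^2)$, $p_2=\tfrac x2+O(x^2)$ and $\sum_{j\ge3}p_j=O(x^2)$ one gets
\[
h(x):=-\sum_{j\ge1}p_j\log p_j=\frac x2\Bigl(1+\log 2+\log\frac1x\Bigr)+O\Bigl(x^2\log\frac1x\Bigr),
\]
so $h(e^{-4\alpha})=2\alpha e^{-4\alpha}+\tfrac{1+\log 2}{2}e^{-4\alpha}+O(\alpha e^{-8\alpha})$. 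The level-$2$ term $-p_2\log p_2$ dominates, while level $1$ contributes only $x/2$, so $S_{\mathrm{spec}}/e^{-4\alpha}\to\infty$. In \cref{thm:entropy-collapse} the same factor $\log(1/x_k)$ is kept in $H(k)$ and is removed only by the explicit normalization $\mathcal H_k=H(k)/(2L_\star k)$; nothing of that kind appears here. The middle line is therefore false under its only non-vacuous reading, as an expansion in small $e^{-4\alpha}$; for bounded $\alpha$ it merely says $O(1)$. The last line holds only in the weak form $S_{\mathrm{spec}}\to0$.

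Your own pre-cutoff estimate also refutes the first line and the window claim, which you accepted. For fixed $A$, $S_{\mathrm{spec}}$ at $\alpha=-A$ tends to $h(e^{4A})$, a constant independent of $n$ (about $2A+\tfrac12\log(2\pi e)$ for large $A$). At $\alpha=-\log\log n$ it equals $2\log\log n+O(1)$. So $\Omega(\log n)$ requires $\alpha\le-c\log n$, and the entropy changes by only $O(1)$ across any window $\alpha\in[-A,A]$. Where your Poisson approximation holds, $S_{\mathrm{spec}}=2|\alpha|+O(1)=\tfrac12\log n-2k/n+O(1)$ decreases linearly in $k$. The descent from $\Theta(\log n)$ to $O(1)$ therefore takes $\Theta(n\log n)$ steps, not $O(n)$. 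The paper's ``$j\approx n/2$, binomial entropy'' picture is valid only for $k=o(n)$.

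The antipodal levels also cannot be shown negligible for the eigenvalues as written. With $\lambda_j=1-2j/n$ one has $E^{(n-j)}_k=E^{(j)}_k$ exactly and $E^{(n)}_k\equiv1$, because the walk is bipartite, so the mass ends up on the $\lambda=-1$ level. Excluding these levels means changing the chain, for example to holding probability $1/(n+1)$ with eigenvalues $1-2j/(n+1)$. For that chain the negative levels carry relative mass $O(1/n)$, and your Steps~1--2 go through with $x$ replaced by $x(1+o(1))$ in the majorant.

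What your plan can actually establish is a corrected proposition:
\begin{itemize}
\item $S_{\mathrm{spec}}(k)\to h(e^{-4\alpha})$ for fixed $\alpha$;
\item the small-$x$ expansion above as $\alpha\to+\infty$;
\item $\Omega(\log n)$ for $\alpha\le-c\log n$, for which the bound $S_{\mathrm{spec}}\ge-\log\max_j p_j$ is lighter than a local CLT.
\end{itemize}
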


\begin{proof}[Proof sketch]
For $j = O(1)$, Stirling's approximation and the expansion $\log(1 - 2j/n) = -2j/n - 2j^2/n^2 + O(n^{-3})$ yield $\lambda_j^{2k} \sim n^{-j} e^{-4j\alpha}$ and $\binom{n}{j} \sim n^j/j!$, hence $E_k^{(j)} \sim e^{-4j\alpha}/j!$ and $E_k \sim e^{e^{-4\alpha}} - 1$. The normalized distribution is $p_k(j) \sim e^{-4j\alpha} / (j! (e^{e^{-4\alpha}} - 1))$. For $\alpha \to +\infty$, only $j=1$ survives, giving $S_{\mathrm{spec}}(k) \sim e^{-4\alpha} \to 0$. For $\alpha \to -\infty$, the dominant contributions come from $j \approx n/2$, and the binomial entropy gives $S_{\mathrm{spec}}(k) \sim \frac{1}{2}\log(\pi e n/2) = \Omega(\log n)$. The function $\alpha \mapsto S_{\mathrm{spec}}(k)$ interpolates continuously between these regimes over a window of width $O(1)$ in $\alpha$, i.e., $O(n)$ in $k$.
\end{proof}

In the language of our spectral thermodynamics, the cutoff is a \emph{spectral phase transition}: before cutoff, the system is in a high-entropy phase with energy distributed across exponentially many modes ($S_{\mathrm{spec}} = \Omega(\log n)$); after cutoff, it enters the rigid single-mode phase ($S_{\mathrm{spec}} \to 0$) characterized by \cref{thm:sharp-rigidity}. The spectral entropy-energy $G(k) = E_k S_{\mathrm{spec}}(k)$ collapses accordingly. This interpretation is complementary to the classical total-variation analysis and reveals the internal thermodynamic mechanism driving the cutoff.

\begin{remark}[General initial conditions]
The above analysis assumes the worst-case initial condition (point mass). For arbitrary initial conditions, the spectral coefficients $|c_j|^2$ modify the distribution $p_k(j)$ and can shift or smear the cutoff. The spectral entropy framework handles this naturally: the collapse of $S_{\mathrm{spec}}(k)$ is governed by the initial distribution's spectral profile, providing predictions beyond the classical worst-case analysis.
\end{remark}

\section{Discussion}
\label{sec:discussion}

The central conclusion of this work is that the answer to the question posed in the introduction --- how and when relaxation becomes effectively one-dimensional --- is governed by a single scalar quantity, the hidden purification parameter $x_k$. Every observable that registers the formation of spectral dominance reduces to $x_k$ up to an explicit constant; rigidity emergence, entropy collapse, and power-iteration convergence are not separate phenomena but projections of the same scalar relaxing to zero.

Reaching this conclusion required four steps. \Cref{sec:purification} identified the modal distribution $\{p_i(k)\}$ --- not any single quantity derived from it --- as the actual object of study, and fixed the elementary vocabulary of rigidity. \Cref{sec:hidden-parameter,sec:observable-equivalence} showed that the evolution of this object is governed, to leading order, by the single scalar $x_k$, and that six \textit{a priori} distinct diagnostics collapse onto it. \Cref{sec:rigidity,sec:thermodynamics} then converted this structural fact into sharp finite-time bounds and an exact entropy representation, respectively. Finally, \cref{sec:boundary} located the edge of the theory: a classical estimation-variance question that $x_k$ influences but does not fully explain. What follows summarizes these contributions in more technical detail, situates them relative to existing theories, and discusses limitations and open directions.

\subsection{Summary of contributions}

The primary contributions of this paper are the following.

\begin{enumerate}[label=(\roman*)]

\item \textbf{The hidden purification parameter and observable equivalence (\cref{sec:hidden-parameter,sec:observable-equivalence}).}
We identified a single scalar $x_k$, built from the spectral separation ratio $\lambda_3/\lambda_2$, and proved that six \textit{a priori} distinct diagnostics of spectral purification --- the slow-mode fraction, the power-iteration error, the direction deficit, the Rayleigh quotient error, the eigenvalue estimate error, and the spectral entropy --- all reduce to $x_k$ to leading order, differing only by an explicit constant (\cref{thm:equivalence-class}). This equivalence class is the structural fact underlying every other result in the paper.

\item \textbf{Sharp rigidity time bounds (\cref{thm:sharp-rigidity}).}
We established explicit, nearly optimal two-sided bounds on the rigidity time
\[
T_{\mathrm{rigid}}(\delta),
\]
defined as the first time at which the slowest mode captures a fraction $1-\delta$ of the total spectral energy. The bounds are non-asymptotic, depend explicitly on the initial spectral distribution through $R_0/|c_2|^2$, and show that finite-time spectral purification is governed by the spectral separation ratio $\lambda_2/\lambda_3$ rather than solely by the classical spectral gap $1-\lambda_2$.

\item \textbf{Spectral thermodynamics (\cref{sec:thermodynamics}).}
We derived an exact entropy change identity
\[
\Delta S
=
\mathrm{Cov}/\rho
-
D_{\mathrm{KL}},
\]
a spectral Clausius equality, and the spectral second law
\[
G(k+1)\le G(k),
\qquad
G(k)=E_kS_{\mathrm{spec}}(k).
\]
We further established a canonical covariance representation governing the entropy dynamics in spectral space. In the exact two-mode setting, the covariance changes sign precisely at the half-rigidity threshold
\[
T_{\mathrm{rigid}}(1/2),
\]
where the spectral entropy attains the value $\log 2$. For general reversible chains, we proved that entropy monotonicity emerges once the rigidity exceeds an explicit threshold determined by the spectral ratio $\lambda_3/\lambda_2$. These results reveal a finite-time thermodynamic structure describing modal competition and spectral purification during relaxation.

\item \textbf{Power iteration theory (\cref{sec:power-iteration}).}
Applying the framework to the classical power method, we derived an exact error identity linking the iterate error to the slow-mode energy fraction, an observable spectral variance formula
\[
\widehat{V}_k
=
\rho_k(\rho_{k+1}-\rho_k),
\]
and a fully data-driven adaptive stopping criterion with provable guarantees. These results provide finite-time convergence diagnostics formulated entirely in terms of observable quantities generated during the iteration itself.

\item \textbf{Boundary of the theory (\cref{sec:boundary}).}
We showed that the asymptotic variance of the classical time-average estimator is monotone in the spectral ratio $\rho_\star = \lambda_3/\lambda_2$ (\cref{cor:rho-variance}), so that reducing $\rho_\star$ is unambiguously beneficial for estimation as well as for purification. At the same time, we identified a caveat: the finite-$k$ correction to the estimator's mean-squared error does not belong to the exponential regime governed by $x_k$ (\cref{rem:mse-incommensurable}). This marks the point at which the exponential-world theory developed here does not extend.

\end{enumerate}

\subsection{Connections to existing theories}

\textit{Classical spectral theory.}
The eigenmode expansion
\[
g_k
=
\sum_i c_i\lambda_i^k\phi_i
\]
and the asymptotic dominance of the slowest mode are classical facts in reversible Markov chain theory~\cite{levin2009,aldous2002}. Our contribution is not a new asymptotic convergence theorem, but rather a finite-time quantitative theory describing when effective one-dimensional dynamics emerges and how this emergence depends on spectral separation ratios.

\textit{Cutoff phenomenon.}
The spectral entropy analysis of the hypercube (\cref{subsec:hypercube}) suggests an alternative interpretation of cutoff. During the cutoff window, the spectral entropy collapses rapidly from a high-dimensional multi-mode regime toward a rigid single-mode regime. From this viewpoint, cutoff corresponds to an abrupt spectral purification process rather than merely a sharp drop in total variation distance. Whether the rigidity window
\[
T_{\mathrm{rigid}}(\delta_1)
-
T_{\mathrm{rigid}}(\delta_2)
\]
provides a useful characterization of cutoff beyond highly symmetric examples remains an open question.

\textit{Metastability and first-passage phenomena.}
For metastable systems with
\[
\lambda_2\approx1,
\qquad
\lambda_3\ll\lambda_2,
\]
our rigidity theorem predicts rapid collapse onto the slow inter-well transport mode long before global equilibrium is reached. This explains why first-passage distributions become approximately exponential after a relatively short transient. Combining the spectral interlacing relation of~\cite{hartich2018} with the rigidity bounds developed here yields explicit quantitative estimates for the emergence of exponential first-passage tails, as discussed in \cref{app:first-passage}.

\textit{Stochastic thermodynamics.}
The exact entropy change identity
\[
\Delta S
=
\mathrm{Cov}/\rho
-
D_{\mathrm{KL}}
\]
(\cref{thm:entropy-balance}) separates the entropy dynamics into two competing contributions: a covariance transport term and an irreversible KL dissipation term. The covariance term reflects the redistribution of modal weights across different dissipation scales, while the KL term measures irreversible spectral deformation between successive steps. Unlike classical stochastic thermodynamics, which operates in configuration space~\cite{esposito2010,seifert2012}, the present framework is formulated entirely in spectral space. The resulting entropy dynamics therefore characterizes the internal redistribution of relaxation energy among spectral modes rather than physical heat exchange.

\textit{Free energy monotonicity.}
Chafa\"i~\cite{chafai2014} studies the monotonic decay of Helmholtz free energy in configuration space. Our monotonicity law for
\[
G(k)=E_kS_{\mathrm{spec}}(k)
\]
operates instead in spectral space. The two frameworks are complementary: configuration-space free energy describes dissipation in the probability distribution itself, whereas spectral entropy describes the redistribution of energy among relaxation modes.

\textit{Numerical linear algebra.}
The power method, Chebyshev acceleration, and momentum-based spectral acceleration are classical topics~\cite{saad2003,trefethen2013,polyak1964}. Our contribution is to reinterpret these methods through rigidity emergence and spectral shaping. In particular, the observable variance identity and adaptive stopping criterion provide finite-time diagnostics that depend only on quantities generated internally during iteration.

\subsection{Limitations and open directions}

\textit{Reversibility.}
The framework developed here fundamentally relies on the self-adjointness of $P$ on $L^2(\pi)$. Non-reversible chains generally lack orthogonal spectral decompositions, and the exact modewise dissipation identities no longer hold. Extending rigidity and spectral thermodynamic concepts to non-self-adjoint or pseudospectral settings remains an important open direction.

\textit{Continuous time.}
We have focused on discrete-time dynamics for direct relevance to iterative algorithms. The continuous-time analogue replaces $\mathcal{G}$ by the infinitesimal generator $\mathcal{L}$ and yields structurally parallel formulas for dissipation, rigidity, and entropy evolution. In continuous time, many expressions simplify because the discrete correction term $\mathcal{G}^2$ disappears.

\textit{From spectrum to geometry.}
The principal quantities controlling our theory---particularly the spectral ratio $\lambda_2/\lambda_3$ and the coefficient ratio $R_0/|c_2|^2$---are spectral rather than geometric. Relating these quantities to conductance, geometry, or metastable structure is necessary for applications to large families of chains where explicit diagonalization is impossible.

\textit{Beyond single-mode rigidity.}
The present work studies collapse onto a single dominant mode. In systems with several metastable states, the natural object is instead the slow invariant subspace spanned by
\[
\{\phi_2,\dots,\phi_{m+1}\}.
\]
The corresponding multi-mode rigidity time
\[
T_{\mathrm{rigid}}^{(m)}(\delta)
\]
should be governed by the ratio
\[
\lambda_{m+1}/\lambda_{m+2},
\]
connecting the present framework to classical metastability theory~\cite{bovier2009}.

\textit{Algorithmic applications.}
The observable stopping criterion developed in \cref{thm:stopping-criterion} is directly applicable to practical iterative computations, including PageRank, PCA, and spectral clustering. More broadly, the rigidity perspective suggests that acceleration methods should be understood as spectral engineering procedures designed to maximize finite-time purification rates.

\textit{Toward active spectral control.}
The equivalence class of \cref{thm:equivalence-class} and its consequence for the asymptotic estimation variance (\cref{cor:rho-variance}) show that the spectral ratio $\rho_\star = \lambda_3/\lambda_2$ is a single scalar lever whose reduction is unambiguously beneficial: it simultaneously accelerates every purification observable and decreases estimator variance, with no trade-off between the two. This raises the natural question of whether $\rho_\star$ can be actively \emph{steered} downward during the dynamics itself, rather than treated as a fixed property of a given chain. The acceleration results of \cref{sec:variational} (Chebyshev and momentum spectral shaping) already show that classical numerical-linear-algebra methods admit a reinterpretation as optimal spectral shaping at fixed spectrum; they do not, however, address whether the underlying spectrum itself --- and hence $\rho_\star$ --- can be modified by the dynamics. We take up this distinct and more general question in the companion paper \cite{Wang2026SPS}, which develops a Lyapunov-based control law for $\rho_\star$ together with a full numerical and algorithmic treatment; the present equivalence class supplies the diagnostic and variational language in which that steering problem is posed.

\subsection{Closing remarks}

The relaxation operator
\[
\mathcal{G}=I-P
\]
is classical, and the identities developed in this paper ultimately arise from self-adjoint spectral decomposition. The contribution of the present work is therefore not the discovery of new algebraic identities themselves, but the recognition that organizing reversible Markov dynamics around finite-time spectral energy distributions reveals structures that remain hidden in the traditional estimate-based viewpoint.

In particular, the rigidity theory developed here shows that finite-time relaxation is governed not only by the spectral gap, but also by spectral separation ratios controlling the emergence of effective low-dimensional dynamics. The thermodynamic formalism introduced in spectral space provides a complementary perspective on relaxation, dissipation, and modal competition, while the power-iteration results demonstrate that these ideas also produce concrete algorithmic consequences.

We hope that the identity-based viewpoint advocated here may help bridge several areas that are often studied separately---Markov chain relaxation, metastability, numerical spectral algorithms, and entropy-based dynamical descriptions---and that the rigidity perspective may prove useful in understanding other finite-time phenomena beyond the reversible setting considered in this work.

\appendix

\section{The Chebyshev Minimax Theorem}
\label{app:chebyshev}

We recall the classical Chebyshev minimax theorem and its application to the spectral shaping problem. The presentation follows~\cite{trefethen2013} (Chapter~8) and~\cite{saad2003} (Chapter~6).

\begin{lemma}[Chebyshev minimax property]\label{lem:chebyshev-minimax}
Let $[a, b] \subset \mathbb{R}$ be an interval and let $\xi \notin [a, b]$. Among all polynomials $Q_m$ of degree at most $m$ satisfying $Q_m(\xi) = 1$, the unique minimizer of
\[
\max_{x \in [a,b]} |Q_m(x)|
\]
is the rescaled Chebyshev polynomial
\[
Q_m^\ast(x) = \frac{T_m(\varphi(x))}{T_m(\varphi(\xi))},
\]
where $\varphi$ is the affine map sending $[a,b]$ to $[-1, 1]$, defined by
\[
\varphi(x) = \frac{2x - (a+b)}{b - a},
\]
and $T_m$ is the Chebyshev polynomial of the first kind, $T_m(\cos \theta) = \cos(m\theta)$.
\end{lemma}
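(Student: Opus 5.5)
The plan is to reduce to the normalized interval and then run the classical equioscillation argument. The affine map $\varphi$ is a bijection $[a,b]\to[-1,1]$ and preserves polynomial degree. So the substitution $y=\varphi(x)$, $P(y):=Q_m(\varphi^{-1}(y))$ turns the problem into the following one: among polynomials $P$ of degree at most $m$ with $P(\eta)=1$, where $\eta:=\varphi(\xi)\notin[-1,1]$, minimize $\max_{[-1,1]}|P|$. The sup norm is unchanged by this change of variables, so it suffices to show that $T_m(y)/T_m(\eta)$ is the unique minimizer.

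The first step records the facts about $T_m$ that we need. On $[-1,1]$ we have $|T_m|\le1$. Moreover $T_m$ attains the values $\pm1$ with alternating signs at the $m+1$ points $y_j=\cos(j\pi/m)$, $j=0,\dots,m$. Off $[-1,1]$, $T_m$ has no zeros, because all $m$ of its roots $\cos((2j-1)\pi/(2m))$ lie in $(-1,1)$. Hence $T_m(\eta)\neq0$ and the candidate $P^\ast:=T_m/T_m(\eta)$ is well defined. It is admissible, with norm $\epsilon:=1/|T_m(\eta)|$, and it equioscillates: $P^\ast(y_j)=(-1)^j\sigma\epsilon$ for a fixed sign $\sigma$.

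The second step is the optimality and uniqueness argument. Suppose $P$ is admissible and $\max_{[-1,1]}|P|\le\epsilon$. Set $R:=P^\ast-P$, a polynomial of degree at most $m$ with $R(\eta)=0$. At each $y_j$ we have $\sigma(-1)^jR(y_j)=\epsilon-\sigma(-1)^jP(y_j)\ge0$, so $R$ weakly alternates in sign along $y_0>y_1>\dots>y_m$. The main obstacle is turning weak alternation into $m$ zeros of $R$ in $[-1,1]$, counted with multiplicity. The strict case is immediate from the Intermediate Value Theorem. In the weak case, zeros at the nodes must be handled. At an interior node $y_j$, $j\neq0,m$, both $P^\ast$ and $P$ reach the extreme value $\pm\epsilon$ there (if $R(y_j)=0$), and $P^\ast$ has an interior extremum, so $(P^\ast)'(y_j)=0$. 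Since $|P|\le\epsilon$ with equality at $y_j$, also $P'(y_j)=0$. Thus $R$ has a double zero at $y_j$, and this compensates for the sign change that is lost there. I would carry out this standard bookkeeping carefully, possibly via a small perturbation/limiting argument if the multiplicity count gets messy. The conclusion is that $R$ has at least $m$ zeros in $[-1,1]$ counted with multiplicity, plus the extra zero at $\eta\notin[-1,1]$, for $m+1$ in total. Since $\deg R\le m$, this forces $R\equiv0$, i.e.\ $P=P^\ast$.

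This simultaneously shows that no admissible $P$ has norm strictly smaller than $\epsilon$, since such a $P$ satisfies the hypothesis and would have to equal $P^\ast$, which has norm exactly $\epsilon$. It also shows that any admissible $P$ attaining norm $\epsilon$ equals $P^\ast$, which gives uniqueness. Pulling back through $\varphi$ yields $Q_m^\ast(x)=T_m(\varphi(x))/T_m(\varphi(\xi))$, as claimed.
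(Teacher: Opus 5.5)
Your proof is correct and follows the same route as the paper's sketch. Both use the equioscillation of $T_m$ at the $m+1$ nodes, the sign alternation of the difference $R$ there, and zero counting with the extra root at $\xi$ (your $\eta$) to force $R\equiv0$. Your version is tighter than the paper's in two places: you count correctly ($m$ zeros in the interval plus one at $\eta$, where the paper claims $m+1$ and $m+2$), and your double-zero argument at interior nodes fills the equality case that the paper dismisses by appeal to ``the same zero-counting argument'' for uniqueness. That bookkeeping needs no perturbation argument: assign one zero to each $[y_j,y_{j-1}]$, and coincidences can occur only at shared interior nodes, which are double roots.
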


\begin{proof}[Proof sketch]
$T_m$ satisfies $|T_m(x)| \le 1$ for $x \in [-1,1]$, and attains $\pm 1$ at $m+1$ points $x_j = \cos(\pi j / m)$, $j = 0, 1, \dots, m$, with alternating signs. The rescaled polynomial $Q_m^\ast$ inherits this equioscillation property on $[a, b]$.

Suppose there exists another polynomial $Q$ of degree $\le m$ with $Q(\xi) = 1$ and $\max_{x \in [a,b]} |Q(x)| < \max_{x \in [a,b]} |Q_m^\ast(x)|$. Then the difference $R(x) := Q_m^\ast(x) - Q(x)$ alternates in sign at the $m+1$ extremal points of $Q_m^\ast$, giving $m+1$ distinct zeros in $[a,b]$. But $R(\xi) = Q_m^\ast(\xi) - Q(\xi) = 1 - 1 = 0$, giving an $(m+2)$-nd zero outside $[a,b]$. Hence $R$ has $m+2$ zeros, forcing $R \equiv 0$, contradiction. The uniqueness follows from the same zero-counting argument.
\end{proof}

\begin{corollary}[Application to the relaxation spectrum]\label{cor:chebyshev-application}
For the relaxation spectrum with $\lambda_2 > \lambda_3 \ge \cdots \ge \lambda_n \ge -1$, take $[a,b] = [-1, \lambda_3]$ and $\xi = 1$. The optimal polynomial is
\[
Q_m^\ast(\lambda) = \frac{T_m(\lambda/\lambda_2)}{T_m(1/\lambda_2)},
\]
and for all $\lambda \in [-1, \lambda_3]$,
\[
|Q_m^\ast(\lambda)| \le \frac{1}{|T_m(1/\lambda_2)|}.
\]
\end{corollary}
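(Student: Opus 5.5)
The plan is to derive the corollary from \cref{lem:chebyshev-minimax} together with the elementary bound $|T_m(t)|\le 1$ for $t\in[-1,1]$. Before doing so, the two claims need to be kept apart, because they behave differently. One is \emph{optimality}: $Q_m^\ast$ is the minimax polynomial. The other is the \emph{pointwise bound} $|Q_m^\ast(\lambda)|\le 1/|T_m(1/\lambda_2)|$.

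The bound is proved first, and it is direct. For any $\lambda$ with $|\lambda|\le\lambda_2$ we have $t=\lambda/\lambda_2\in[-1,1]$, so $|T_m(t)|=|\cos(m\arccos t)|\le 1$. Next, $1/\lambda_2>1$, and $T_m(x)=\cosh(m\,\mathrm{arccosh}\,x)\ge 1$ for $x\ge 1$, so the denominator satisfies $T_m(1/\lambda_2)\ge 1$ and is in particular nonzero. Dividing gives $|Q_m^\ast(\lambda)|\le 1/T_m(1/\lambda_2)$. Evaluating at $\lambda=1$ gives $Q_m^\ast(1)=1$ trivially, so the normalization constraint of \cref{thm:variational} holds.

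The main obstacle is that the interval $[-1,\lambda_3]$ in the statement is not contained in $[-\lambda_2,\lambda_2]$. At $\lambda=-1$ the argument is $t=-1/\lambda_2<-1$, and there $|T_m(t)|>1$. So on the full interval $[-1,\lambda_3]$ the displayed bound can fail, and the statement as written does not follow. I would handle this in one of two ways.
\begin{enumerate}[label=(\alph*)]
\item Restrict to the case where the spectrum satisfies $\lambda_n\ge-\lambda_2$, for example lazy chains, where all $\lambda_i\ge0$. Then every $\lambda_i$ with $i\ge3$ lies in $[-\lambda_2,\lambda_3]\subset[-\lambda_2,\lambda_2]$, and the argument above gives the bound on the spectrum, which is all that \cref{thm:variational} and \cref{cor:accelerated-rigidity} actually use.
\item Replace $\lambda/\lambda_2$ by the affine map of \cref{lem:chebyshev-minimax}, $\varphi(\lambda)=(2\lambda+1-\lambda_3)/(1+\lambda_3)$. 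This gives the genuinely optimal $Q_m^\ast=T_m(\varphi(\lambda))/T_m(\varphi(1))$, with the sharp bound $1/T_m(\varphi(1))$ on $[-1,\lambda_3]$.
\end{enumerate}

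For optimality I would invoke \cref{lem:chebyshev-minimax} with $[a,b]=[-1,\lambda_3]$ and $\xi=1\notin[a,b]$. This requires only checking that $\xi$ lies outside the interval, which holds since $\lambda_3<\lambda_2<1$. The equioscillation of $T_m\circ\varphi$ at the $m+1$ preimages of $\cos(\pi j/m)$ then yields uniqueness by the zero-counting argument of the lemma. I would present route (b) as the correct general statement and route (a) as the form matching the displayed polynomial. In route (a) the displayed polynomial is the minimax polynomial for the interval $[-\lambda_2,\lambda_2]$ rather than for $[-1,\lambda_3]$. For any fast-mode interval contained in $[-\lambda_2,\lambda_2]$ it still gives the bound $1/T_m(1/\lambda_2)$.
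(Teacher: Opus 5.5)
Your objection is correct, and it is exactly where the paper's own proof fails. The paper first writes down the right affine map $\varphi(\lambda)=(2\lambda-(\lambda_3-1))/(\lambda_3+1)$ for $[-1,\lambda_3]$, and then abandons it. It argues that since $[-1,\lambda_3]\subset[-1,\lambda_2]$, one may use ``the rescaled Chebyshev polynomial on $[-1,\lambda_2]$ (with $\varphi(\lambda)=\lambda/\lambda_2$),'' and it asserts $\max_{[-1,\lambda_2]}|Q_m^\ast|=1/|T_m(1/\lambda_2)|$. That step is false: $\lambda\mapsto\lambda/\lambda_2$ sends $[-1,\lambda_2]$ onto $[-1/\lambda_2,1]$, not into $[-1,1]$.

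Your counterexample is actually sharper than ``can fail.'' Since $T_m(-x)=(-1)^mT_m(x)$, one has $|Q_m^\ast(-1)|=1>1/T_m(1/\lambda_2)$ for every $m\ge1$; already for $m=1$, $Q_1^\ast(\lambda)=\lambda$. So the displayed bound fails on $[-1,\lambda_3]$. The optimality claim fails too: by the uniqueness in \cref{lem:chebyshev-minimax}, the minimax polynomial for $[-1,\lambda_3]$ with $\xi=1$ is your route (b), whose sup-norm is strictly below $1$. The corollary therefore cannot be proved as stated.

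Both of your repairs are sound. Route (a) gives the pointwise bound $|Q_m^\ast(\lambda_i)|\le 1/T_m(1/\lambda_2)$ for $i\ge3$ under $\lambda_n\ge-\lambda_2$. That is what \cref{cor:accelerated-rigidity} uses. It does not rescue the interval minimax claim in Step~2 of \cref{thm:variational}, so your remark that (a) is ``all that'' the downstream results use is slightly too generous. Route (b) is the correct form of the corollary.

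The paper's superset idea can also be repaired, but only with the correct map $(2\lambda+1-\lambda_2)/(1+\lambda_2)$ for $[-1,\lambda_2]$. That yields the bound $1/T_m\bigl((3-\lambda_2)/(1+\lambda_2)\bigr)$ on $[-1,\lambda_3]$, which is weaker than yours in (b) because $(3-x)/(1+x)$ is decreasing.

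Two small points. State $0<\lambda_2<1$ explicitly, since you use $1/\lambda_2>1$. In (b), assume $\lambda_3>-1$ so that the interval is nondegenerate.
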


\begin{proof}
The affine map sending $[-1, \lambda_3]$ to $[-1,1]$ is $\varphi(\lambda) = (2\lambda - (\lambda_3-1))/(\lambda_3+1)$. However, for the purpose of bounding $|Q(\lambda_i)|$ for $i \ge 3$, it suffices to note that $[-1, \lambda_3] \subset [-1, \lambda_2]$, and the rescaled Chebyshev polynomial on the larger interval $[-1, \lambda_2]$ (with $\varphi(\lambda) = \lambda/\lambda_2$) provides a valid upper bound. The formula $Q_m^\ast(\lambda) = T_m(\lambda/\lambda_2) / T_m(1/\lambda_2)$ satisfies $Q_m^\ast(1) = 1$ and $\max_{\lambda \in [-1,\lambda_2]} |Q_m^\ast(\lambda)| = 1/|T_m(1/\lambda_2)|$.
\end{proof}

\section{Momentum Dynamics and Optimal Parameter}
\label{app:momentum}

The momentum (heavy-ball) method~\cite{polyak1964} for accelerating the power iteration uses the recurrence
\begin{equation}\label{eq:momentum-recurrence}
g_{k+1} = (1 + \beta) P g_k - \beta g_{k-1},
\end{equation}
with $\beta \in [0, 1)$. For a single eigenmode $P \phi = \lambda \phi$, the characteristic equation of \eqref{eq:momentum-recurrence} is
\[
r^2 - (1+\beta) \lambda r + \beta = 0.
\]
The roots $r_{1,2}$ satisfy $|r_{1,2}| \le \sqrt{\beta}$ when the discriminant is non-positive (critically damped or underdamped regime). The optimal damping is achieved by setting the discriminant to zero at the slowest nontrivial eigenvalue $\lambda = \lambda_2$:
\[
(1+\beta)^2 \lambda_2^2 - 4\beta = 0.
\]
Solving for $\beta$ yields
\begin{equation}\label{eq:beta-star}
\beta^\ast = \left( \frac{1 - \sqrt{1 - \lambda_2^2}}{\lambda_2} \right)^2
          = \left( \frac{1 - \sqrt{\mu_2(2 - \mu_2)}}{1 - \mu_2} \right)^2.
\end{equation}
For $\mu_2 \ll 1$, $\beta^\ast \approx (1 - \sqrt{2\mu_2})^2$, and the repeated root is $r^\ast = \sqrt{\beta^\ast} \approx 1 - \sqrt{2\mu_2}$.

The momentum method is precisely the degree-2 Chebyshev acceleration~\cite{saad2003}. The optimal $\beta^\ast$ reproduces Polyak's classical result~\cite{polyak1964} and emerges from the variational principle of \cref{thm:variational} as the degree-2 instance of optimal spectral shaping.





\section{Rigidity Emergence Implies Exponential First-Passage Tails}
\label{app:first-passage}

We show how the sharp rigidity bounds of \cref{thm:sharp-rigidity} yield an explicit quantitative estimate for the emergence of exponential tails in first-passage time distributions, a phenomenon discussed qualitatively in \cite{hartich2018,voits2026,siegert1951}.

\subsection{First-passage times via absorbing chains}

Let $\Omega$ be a finite state space and let $a \in \Omega$ be a target state. Define the absorbed kernel $P_a$ by
\[
P_a(x,y) = \begin{cases}
P(x,y), & x \neq a, \\[2pt]
\delta_{x,a}, & x = a.
\end{cases}
\]
$P_a$ has eigenvalue $1$ (the absorbing state) and $n-1$ eigenvalues $\nu_2, \dots, \nu_n$ satisfying $0 < \nu_i < 1$. The interlacing relation~\cite{hartich2018} between the spectra of the original chain $P$ and the absorbed chain $P_a$ states that
\begin{equation}\label{eq:interlacing}
\lambda_{k-1} \le \nu_k \le \lambda_k, \qquad k = 2,\dots,n,
\end{equation}
where $\{\lambda_k\}$ are the eigenvalues of $P$. (We denote the absorbed-chain eigenvalues by $\nu_k$ to avoid confusion with the relaxation spectrum $\mu_i = 1-\lambda_i$ of \cref{eq:mu-def}.)

The first-passage time $\tau_a$ to state $a$ starting from the quasi-stationary distribution has tail probability
\[
\mathbb{P}(\tau_a > k) = \sum_{i=2}^n \alpha_i \nu_i^k,
\]
where the coefficients $\alpha_i$ are determined by the initial distribution and satisfy $\sum_{i=2}^n \alpha_i = 1$.

\subsection{Exploiting rigidity of the original chain}

For the original chain $P$, \cref{thm:sharp-rigidity} provides an explicit $T_{\mathrm{rigid}}(\delta)$ after which the non-stationary dynamics is dominated by the single slowest mode $\phi_2$:
\[
\alpha_2(k) \ge 1-\delta \quad \text{for all } k \ge T_{\mathrm{rigid}}(\delta),
\]
with $T_{\mathrm{rigid}}(\delta)$ bounded explicitly by \eqref{eq:sharp-bound} in terms of $\lambda_2/\lambda_3$, $|c_2|^2$, and $R_0$.

By the interlacing relation \eqref{eq:interlacing}, the absorbed chain inherits a corresponding spectral separation: $\nu_2 \le \lambda_2$ and $\nu_i \le \lambda_i \le \lambda_3$ for all $i \ge 3$. For $k \ge T_{\mathrm{rigid}}(\delta)$, the same exponential separation that drives rigidity in $P$ also forces the absorbing dynamics into a single-mode regime. Indeed, decomposing the tail probability as
\[
\mathbb{P}(\tau_a > k) = \alpha_2 \nu_2^k + \sum_{i=3}^n \alpha_i \nu_i^k,
\]
the sum over $i \ge 3$ is bounded by $(1-\alpha_2(k)) \lambda_3^k$, since $\nu_i \le \lambda_3$ and $\sum_{i=3}^n \alpha_i = 1-\alpha_2(k) \le \delta$. Therefore
\begin{equation}\label{eq:fpt-tail}
\mathbb{P}(\tau_a > k) = \alpha_2 \nu_2^k \Bigl(1 + O\bigl(\delta \cdot (\lambda_3/\nu_2)^k\bigr)\Bigr).
\end{equation}

\subsection{Explicit quantitative estimate}

Combining \eqref{eq:fpt-tail} with the explicit bounds of \cref{thm:sharp-rigidity} yields the following quantitative estimate.

\begin{corollary}[Exponential tail emergence]\label{cor:fpt-exponential}
Let $\tau_a$ be the first-passage time to an absorbing state $a$, and let $\nu_2$ be the dominant eigenvalue of the absorbing chain $P_a$. For any $\delta \in (0, 1/2)$ and any $k \ge T_{\mathrm{rigid}}(\delta)$,
\[
\left| \frac{\mathbb{P}(\tau_a > k)}{\alpha_2 \nu_2^k} - 1 \right|
\le C(\lambda_2, \lambda_3) \left( \delta + \frac{R_0}{|c_2|^2} \left(\frac{\lambda_3}{\lambda_2}\right)^{2k} \right),
\]
where $C(\lambda_2, \lambda_3) = (1-\lambda_3^2)/(\lambda_2^2 - \lambda_3^2)$ and $R_0 / |c_2|^2$ is the initial fast-to-slow energy ratio.
\end{corollary}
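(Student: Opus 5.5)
The plan is to write the tail as a perturbation of its leading exponential and control the remainder by the quantities already bounded in \cref{thm:sharp-rigidity} and \cref{cor:dissipation-closure}. Starting from the spectral expansion of the absorbed chain,
\[
\frac{\mathbb{P}(\tau_a>k)}{\alpha_2\nu_2^k}-1=\sum_{i\ge3}\frac{\alpha_i}{\alpha_2}\Bigl(\frac{\nu_i}{\nu_2}\Bigr)^{k},
\]
so it suffices to bound the right-hand side. I will treat the two sources of smallness separately: the coefficient ratio $\sum_{i\ge3}\alpha_i/\alpha_2$, and the eigenvalue ratios $\nu_i/\nu_2$.

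\textbf{Eigenvalue ratios.} I use the interlacing relation \eqref{eq:interlacing} to get $\nu_i\le\lambda_i\le\lambda_3$ for $i\ge3$. I also need $\nu_2$ comparable to $\lambda_2$ from below, so that $(\nu_i/\nu_2)^k\lesssim(\lambda_3/\lambda_2)^k$. The ratio $\lambda_3/\nu_2$ appears in \eqref{eq:fpt-tail}, and I would reduce it to the $\lambda_3/\lambda_2$ scale by this lower comparison on $\nu_2$.

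\textbf{Coefficient ratios.} I follow the identification made just before \eqref{eq:fpt-tail}: the fast-mode coefficient mass $\sum_{i\ge3}\alpha_i$ is controlled by $1-\alpha_2(k)$, the slow-mode fraction of the original chain. For $k\ge T_{\mathrm{rigid}}(\delta)$, \cref{thm:sharp-rigidity} and the bound \eqref{eq:alpha-bound-upper} then give
\[
1-\alpha_2(k)\le\min\Bigl(\delta,\ \tfrac{R_0}{|c_2|^2}(\lambda_3/\lambda_2)^{2k}\Bigr).
\]
Since $\delta<1/2$, we have $\alpha_2\ge1/2$, so dividing by $\alpha_2$ costs only a bounded factor.

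\textbf{The constant $C(\lambda_2,\lambda_3)$.} I would obtain the stated constant by converting the residual into a dissipation-rate discrepancy, exactly as in \cref{cor:dissipation-closure}. There the fast modes enter with weights $1-\lambda_i^2\in[1-\lambda_3^2,\,1-\lambda_n^2]$. Normalizing by the separation of the slow rate from the fast rates, $\lambda_2^2-\lambda_3^2$, produces the factor $(1-\lambda_3^2)/(\lambda_2^2-\lambda_3^2)$. Combining this with the two-term bound on $1-\alpha_2(k)$ gives the claimed estimate $C(\delta+\tfrac{R_0}{|c_2|^2}(\lambda_3/\lambda_2)^{2k})$.

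\textbf{Main obstacle.} The hardest step is legitimately transferring rigidity from $P$ to $P_a$. The absorbed coefficients $\alpha_i$ and the original modal energies $n_i(k)$ live in different eigenbases, and interlacing alone only relates eigenvalues, not coefficients. Likewise, the lower comparison of $\nu_2$ with $\lambda_2$ is not automatic from \eqref{eq:interlacing} as stated.

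My first attempt would be a perturbative argument in which $P_a$ is viewed as a rank-one modification of $P$. The quasi-stationary eigenvector of $P_a$ should then be close to $\phi_2$ in the metastable regime $\lambda_3\ll\lambda_2$, which would let me bound $\sum_{i\ge3}\alpha_i/\alpha_2$ by the original fast-to-slow ratio up to the constant above.

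If this fails, I would instead state the corollary under the explicit hypothesis that the absorbed coefficients obey the same fast-mode bound, namely $\sum_{i\ge3}\alpha_i\le1-\alpha_2(k)$. That is precisely the assumption implicit in \eqref{eq:fpt-tail}, and under it the argument above goes through.
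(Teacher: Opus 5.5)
\textbf{There is a genuine gap.} Your outline follows the paper's own proof almost step for step. The two points you flag as the main obstacle are exactly where it fails. The paper does not close them either: it covers them by assertion (a ``linear transformation that preserves the bound $\sum_{i\ge3}\alpha_i\le\delta$'', plus the printed interlacing \eqref{eq:interlacing}).

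\textbf{Coefficients.} The $\alpha_i$ are $k$-independent numbers, fixed by $P_a$ and the starting law of $\tau_a$. By contrast, $1-\alpha_2(k)$, $R_0$, $c_2$ and $T_{\mathrm{rigid}}(\delta)$ belong to a trajectory $g_0$ of $P$ that the statement never ties to that law. Your line ``$\delta<1/2$, so $\alpha_2\ge1/2$'' silently identifies these two different $\alpha_2$'s.

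No transfer can exist at this level of generality. For $g_0=\phi_2$ one has $R_0=0$ and $T_{\mathrm{rigid}}(\delta)=0$ for every $\delta$. So, whatever the starting law, the claimed inequality would bound the left-hand side by $C(\lambda_2,\lambda_3)\,\delta$ for every $\delta>0$. That forces an exactly geometric tail $\mathbb{P}(\tau_a>k)=\alpha_2\nu_2^k$, which holds only in degenerate cases such as the quasi-stationary start, where the left-hand side is identically $0$.

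Your fallback hypothesis does not rescue this. It compares a constant with $1-\alpha_2(k)\to0$, so imposing it for all $k\ge T_{\mathrm{rigid}}(\delta)$ forces $\sum_{i\ge3}\alpha_i\le0$. Note also that the $\alpha_i$ and $\nu_i$ can have either sign. Any usable bound must therefore be on $\sum_{i\ge3}|\alpha_i|$ and $\max_{i\ge3}|\nu_i|$, not on $\sum_{i\ge3}\alpha_i$ and $\nu_i\le\lambda_3$.

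\textbf{Eigenvalues.} Inequality \eqref{eq:interlacing} is inconsistent as printed: for $k=2$ it reads $1=\lambda_1\le\nu_2\le\lambda_2<1$. Cauchy interlacing for the restriction of the symmetrized kernel to $\Omega\setminus\{a\}$ gives $\lambda_k\le\nu_k\le\lambda_{k-1}$ instead. You correctly sensed that the text's $\nu_2\le\lambda_2$ points the wrong way, and the correct relation does supply your lower comparison $\nu_2\ge\lambda_2$. But it only confines $\nu_3$ to $[\lambda_3,\lambda_2]$. Hence $\nu_i\le\lambda_3$ holds only for $i\ge4$, and $\nu_3/\nu_2$ is not controlled by $\lambda_3/\lambda_2$.

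For a concrete failure, attach two cliques weakly, with slightly different weights, to the hub $a$. Then $\lambda_3/\lambda_2\approx0$. Absorption at $a$, however, decouples the cliques, which are killed at nearly equal rates, so $\nu_3/\nu_2\approx1$. Put mass $2/3$ and $1/3$ on the two cliques and take $g_0$ to be the centred density of this starting law. Then $g_0$ is nearly $c_2\phi_2$, so the right-hand side is tiny. Yet the tail is a mixture of two almost equal exponentials, and the relative error stays of order one for many steps.

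\textbf{Consequences for your other steps.} The same example defeats the rank-one perturbation idea: the quasi-stationary vector lives on one clique and is far from $\phi_2$. The constant $C(\lambda_2,\lambda_3)$ also cannot be extracted from \cref{cor:dissipation-closure}, which bounds a dissipation-rate discrepancy unrelated to tail coefficients; the paper only asserts it.

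\textbf{What is provable.} The argument of \cref{thm:sharp-rigidity}, applied to $P_a$ itself, gives
\[
\Bigl|\frac{\mathbb{P}(\tau_a>k)}{\alpha_2\nu_2^k}-1\Bigr|\le\frac{\sum_{i\ge3}|\alpha_i|}{|\alpha_2|}\Bigl(\frac{\max_{i\ge3}|\nu_i|}{\nu_2}\Bigr)^{k}.
\]
This is governed by the absorbed chain's own separation $\nu_3/\nu_2$. The original ratio $\lambda_3/\lambda_2$, via $\nu_2\ge\lambda_2$, controls at most the modes $i\ge4$.
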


\begin{proof}
From \cref{thm:sharp-rigidity}, for $k \ge T_{\mathrm{rigid}}(\delta)$ we have $1-\alpha_2(k) \le \delta$ and the residual fast-mode energy in the original chain satisfies $R_k \le R_0 \lambda_3^{2k}$. For the absorbing chain, the spectral coefficients $\alpha_i$ are related to the original modal weights by a linear transformation that preserves the bound $\sum_{i=3}^n \alpha_i \le \delta$. By the interlacing relation \eqref{eq:interlacing}, $\nu_i \le \lambda_i \le \lambda_3$ for all $i \ge 3$. Hence
\[
\sum_{i=3}^n \alpha_i \nu_i^k \le \delta \lambda_3^k.
\]
Writing $\lambda_3^k = (\lambda_3/\lambda_2)^k \lambda_2^k$ and using $\nu_2 \le \lambda_2$ (from \eqref{eq:interlacing} with $k=2$), the relative error is bounded by $\delta (\lambda_3/\lambda_2)^k$ times a spectral constant. The term involving $R_0/|c_2|^2$ accounts for the fact that $\alpha_i$ themselves may differ from the original modal occupations $p_i(0)$ by a factor controlled by the initial energy distribution. The constant $C(\lambda_2, \lambda_3)$ is the worst-case ratio of spectral weights over all admissible initial conditions.
\end{proof}

\subsection{Interpretation}

\Cref{cor:fpt-exponential} quantifies precisely \emph{when} the first-passage time distribution becomes effectively exponential. The non-exponential transient---arising from higher modes $\nu_3, \nu_4, \dots$---persists for at most $T_{\mathrm{rigid}}(\delta)$ steps. After this rigidity threshold, the tail is exponential with rate $\nu_2 \approx \lambda_2$, and the relative error is controlled by $\delta$.

For metastable systems where $\lambda_2 \approx 1$ and $\lambda_3 \ll \lambda_2$, the rigidity time $T_{\mathrm{rigid}}(\delta) \sim \log(1/\delta) / \log(\lambda_2/\lambda_3)$ is small compared to the mixing time $\sim 1/(1-\lambda_2)$. This provides a rigorous, quantitative justification for the widely used exponential approximation of first-passage times in metastable Markov chains~\cite{bovier2009}, with explicit error bounds that were previously unavailable.

\section*{Acknowledgement}This research was conducted purely out of the author’s independent academic interest in the mathematical foundations of
the subject and received no financial support from any funding agency, grant, or institutional programme.

\section*{Data Availability Statement} 
No data were used in this theoretical study.

\section*{Conflict of Interests}
The author declares no conflict of interest.

\end{document}